\newcommand{\triplearrows}{\begin{smallmatrix} \to \\ \to \\ 
\to \end{smallmatrix} }
\newcommand{\alg}{\mathrm{Alg}}
\newcommand{\ko}{{ko}}
\renewcommand{\hom}{\mathrm{Hom}}
\newcommand{\sOGF}{\mathcal{O}_{\mathcal{F}}(G)}
\newcommand{\ho}{\mathrm{Ho}}
\newcommand{\e}[1]{\mathbb{E}_{#1}}
\newcommand{\cati}{\mathrm{Cat}_\infty}
\newcommand{\cb}{\mathrm{CB}^\bullet}
\newcommand{\md}{\mathrm{Mod}}
\newcommand{\clg}{\mathrm{CAlg}}
\newcommand{\fun}{\mathrm{Fun}}
\newcommand{\catst}{\mathrm{Cat}_\infty^{\mathrm{perf}}}
\newcommand{\spec}{\mathrm{Spec}}
\renewcommand{\sp}{\mathrm{Sp}}
\renewcommand{\mod}{\mathrm{Mod}}
\newcommand{\sF}{\mathcal{F}}
\theoremstyle{definition}
\newtheorem{definition}{Definition}[section]
\newtheorem{question}[definition]{Question}
\newtheorem{example}[definition]{Example}
\newtheorem{proposition}[definition]{Proposition}
\newtheorem{cons}[definition]{Construction}
\newtheorem{corollary}[definition]{Corollary}
\newtheorem{remark}[definition]{Remark}
\newtheorem{theorem}[definition]{Theorem}
\newcommand{\pic}{\mathrm{Pic}}
\begin{document}
	\title{Examples of descent up to nilpotence}
	\author{Akhil Mathew}
	\address{Harvard University\\ Cambridge, Massachusetts, USA }
	\email{amathew@math.harvard.edu}
	\urladdr{http://math.harvard.edu/~amathew/}
	\date{\today}

\maketitle

\begin{abstract}
We give a survey of the ideas of descent and nilpotence.
We focus on examples arising from chromatic homotopy theory and from group
actions, as well as a few examples in algebra.  
\end{abstract}

\section{Introduction}

Let $(\mathcal{C}, \otimes, \mathbf{1})$ denote a \emph{tensor-triangulated}
category, i.e., a triangulated category equipped 
with a compatible (in particular, biexact) symmetric monoidal structure. 
Examples of such abound in various aspects both of stable homotopy theory
(e.g., the stable homotopy category) and
in representation theory, via derived 
categories of representations and stable module categories. 
In many such cases, we are interested in describing large-scale features of
$\mathcal{C}$ and of the associated mathematical structure. 
To this end, there are a number of basic invariants of $\mathcal{C}$ that we
can study, such as the lattice of thick subcategories (or, preferably, thick
$\otimes$-ideals)
or localizing subcategories, 
the Picard group $\pic(\mathcal{C})$ of isomorphism classes of
invertible objects, and 
the Grothendieck group $K_0(\mathcal{C})$. 

One of our basic goals 
is relate invariants of $\mathcal{C}$ to those of another
$\otimes$-triangulated category $\mathcal{C}'$ receiving a
$\otimes$-triangulated functor $\mathcal{C} \to \mathcal{C}'$; for whatever
reason, we may expect invariants to be simpler to understand for $\mathcal{C}'$.
However, we can also hope to use information over $\mathcal{C}'$ to understand
information over $\mathcal{C}$ via ``descent.''
Consider for instance the extension $\mathbb{R} \subset \mathbb{C}$ of the real
numbers to the complex numbers. 
Since $\mathbb{C}$ is algebraically closed, phenomena are often much easier to
study over $\mathbb{C}$ than $\mathbb{R}$. However, we can study phenomena over
$\mathbb{C}$ via the classical process of Galois descent. 

Classically, in algebra, descent is carried out along \emph{faithfully flat}
maps of rings. Here, however, it turns out that there is a large class of
extensions which are far from faithfully flat, but which satisfy a categorical
condition that forces the conclusion of descent nonetheless to hold.  
The key definition is as follows.

\begin{definition} 
Let $\mathcal{C}$ be a $\otimes$-triangulated category and let $A$ be an
algebra object. An object of $\mathcal{C}$ is said to be \emph{$A$-nilpotent}
if it belongs to the thick $\otimes$-ideal generated by $A$.  
The algebra $A$ is said to be \emph{descendable} if the unit of $\mathcal{C}$
is $A$-nilpotent. We will say that a map $R \to R'$ of $\e{\infty}$-ring
spectra is \emph{descendable} if $R'$ is descendable in
$\md(R)$, the category of $R$-module spectra. \end{definition} 

The notion of ``$A$-nilpotence'' is very classical and goes back to Bousfield
\cite{Bou79}. The idea of ``descendability'' is implicit at various points in
the literature (in particular, in the work of Hopkins-Ravenel in chromatic
stable homotopy theory) but has been systematically studied by
Balmer \cite{Balmersep} (especially in relation to the spectrum of
\cite{Balmer05}) and by the author in \cite{MGal}. 
We refer as well to \cite[Appendix D.3]{lurie_sag} for a treatment.
The notion of descendability is enough to imply that a version of faithfully
flat descent holds; however, a descendable algebra may be far from being
faithfully flat. A simple example is the map $KO \to KU$ of ring spectra.

In this paper, our goal is to give an introduction to these ideas
and an overview of several examples, emphasizing the $\sF$-nilpotence of
\cite{MNN15i, MNN15ii}. 
This paper is mostly intended as an exposition, although some of
the results are improvements or variants of older ones. 
A number of basic questions remain, and we have taken the
opportunity to highlight some of them. 
We have in particular emphasized the role that \emph{exponents} of nilpotence
(\Cref{def:nilpexp}) plays. 
One new result in this paper, which was explained to us by Srikanth Iyengar, is
\Cref{thm:flatdescsri}, which gives a larger class of faithfully flat extensions
which are descendable. 

There are many 
applications of these ideas that we shall not touch on here. For example, we
refer to \cite{MS, HMS} for the use of these techniques to calculate Picard
groups of certain ring spectra; \cite{Balmersep, Ma15} for applications
to the classification of thick subcategories; and \cite{CMNN} for
applications to Galois descent in algebraic $K$-theory.

\subsection*{Conventions}

Throughout this paper, we will use the language of $\infty$-categories as in
\cite{Lur09, Lur16}. In most
cases, we will only use the language very lightly, as most of the invariants
exist at the triangulated level.\footnote{We note that the descent-theoretic
approach to Picard groups uses $\infty$-categorical technology in an essential
manner.} 

In particular, we will use the phrase \emph{stably symmetric monoidal
$\infty$-category} to mean a (usually small) symmetric monoidal, 
stable $\infty$-category whose tensor product is biexact; this is the natural
$\infty$-categorical lift of a tensor-triangulated category. We will also need
to work with large $\infty$-categories. We will use the theory of presentable
$\infty$-categories of \cite[Ch. 5]{Lur09}. In particular, a \emph{presentably
symmetric monoidal stable $\infty$-category} is the natural ``large'' setting
for these questions; it refers to a presentable, stable $\infty$-category
equipped with a symmetric monoidal tensor product which is bicocontinuous.  

We will let $\alg(\mathcal{C})$ denote the $\infty$-category of
associative (or $\e{1}$) algebras 
in a symmetric monoidal $\infty$-category and let $\clg(\mathcal{C})$ denote
the $\infty$-category of commutative (or $\e{\infty}$) algebras. 
If $R$ is an associative (i.e., $\mathbb{E}_1$) ring spectrum, we will write
$\md(R)$ for the $\infty$-category of $R$-module spectra (see \cite{EKMM},
\cite[Ch. 7]{Lur16}). When $R$ is an
$\e{\infty}$-ring, then $\md(R)$ is a presentably symmetric monoidal stable
$\infty$-category. When $R$ is a discrete associative (resp. commutative) ring,
then we can regard $R$ (or the associated Eilenberg-MacLane spectrum) as an
$\e{1}$ (resp. $\e{\infty}$)-ring and will write either $\md(R)$ or $D(R)$ for
the $\infty$-category of $R$-module spectra, which is equivalent to the derived
$\infty$-category of $R$. 

\newcommand{\tht}{\mathrm{Thick}^{\otimes}}
\renewcommand{\th}{\mathrm{Thick}}

\subsection*{Acknowledgments} 
I would like to thank Bhargav Bhatt, Srikanth Iyengar, and Jacob Lurie for
helpful discussions. I would
especially like to thank my collaborators Niko Naumann and Justin Noel; much
of this material is drawn from  
\cite{MNN15i, MNN15ii}.  
Most of all, I would like to thank Mike Hopkins: most of these ideas
originated in his work.

\section{Thick $\otimes$-ideals and nilpotence}

\subsection{Thick subcategories and $\otimes$-ideals}
We begin by reviewing the theory of thick subcategories. As this material is
very classical, we will be brief. 

Let $\mathcal{C}$ be an idempotent-complete stable $\infty$-category. 
\begin{definition} 
A \textbf{thick subcategory} of $\mathcal{C}$ is a full subcategory 
$\mathcal{D} \subset \mathcal{C}$ satisfying the following three conditions: 
\begin{enumerate}
\item $0 \in \mathcal{D} $. 
\item If $X_1 \to X_2 \to X_3$ is a cofiber sequence (i.e., exact triangle) in
$\mathcal{C}$, and two out of three of the $\left\{X_i\right\}$ belong to
$\mathcal{D}$, then the third belongs to $\mathcal{D}$. 
\item $\mathcal{D}$ is idempotent-complete. Equivalently, if $X, Y \in \mathcal{C}$
and $X \oplus Y  \in \mathcal{D}$, we have $X, Y \in \mathcal{D}$. 
\end{enumerate}
Given a collection $\mathcal{S} \subset \mathcal{C}$ of objects, there is a
smallest thick subcategory of $\mathcal{C}$ containing $\mathcal{S}$; we will
write $\th(\mathcal{S})$ for this and call it the thick subcategory
\textbf{generated by $\mathcal{S}$}. 
\end{definition}


\begin{cons}
We suppose that $\mathcal{S}$ is closed under direct sums and suspensions
$\Sigma^i, i \in \mathbb{Z}$.
Then $\th(\mathcal{S})$ has a natural inductive
increasing filtration $\th(\mathcal{S})_0 \subset \th(\mathcal{S})_1 \subset
\dots  \subset \th(\mathcal{S})$.
This filtration is well-known in the literature. 
Compare Christensen \cite[Sec. 3.2]{Ch98}, the dimension of triangulated
categories introduced by Rouquier \cite{Rouq}, and the treatment  and the theory of
{levels} in Avramov-Buchweitz-Iyengar-Miller \cite[Sec. 2.2]{ABIM}. 
\begin{itemize}
\item $\th(\mathcal{S})_0 = \left\{0\right\}$. 
\item $\th(\mathcal{S})_1$ consists of the retracts of objects 
in $\mathcal{S}$.
\item An object $X$ belongs to $\th(\mathcal{S})_n$ if $X$ is a retract of
an object $\widetilde{X}$ such that there exists a cofiber
sequence $\widetilde{X}' \to \widetilde{X} \to \widetilde{X}''$ such that 
$\widetilde{X}' \in \th(\mathcal{S})_1$ (i.e., $\widetilde{X}'$ is a retract
of an object in $\mathcal{S}$)  and $\widetilde{X}'' \in
\th(\mathcal{S})_{n-1}$.
\end{itemize}
\end{cons}

\begin{proposition} 
\begin{enumerate}
\item  
$\th(\mathcal{S}) = \bigcup_{n \geq 0} \th(\mathcal{S})_n$ and each $\th(\mathcal{S})_n$ is idempotent-complete.
\item
Given a cofiber sequence $X' \to X \to X''$ with $X' \in
\th(\mathcal{S})_{n_1}$ and $ X'' \in \th(\mathcal{S})_{n_2}$, we have $X \in
\th(\mathcal{S})_{n_1 + n_2}$.
\end{enumerate}
\end{proposition}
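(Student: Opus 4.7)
The plan is to prove part (2) first, by induction on $n_1$, using the octahedral axiom (equivalently, the composable-cofiber lemma in a stable $\infty$-category) as the main tool. Part (1) will then follow with minor additional work.

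For (2), the base case $n_1 = 0$ is immediate since $X' \cong 0$ forces $X \to X''$ to be an equivalence. For the inductive step, I unpack the definition: $X'$ is a retract of some $\widetilde{X}'$ fitting in a cofiber sequence $V' \to \widetilde{X}' \to V''$ with $V' \in \th(\mathcal{S})_1$ and $V'' \in \th(\mathcal{S})_{n_1 - 1}$. By idempotent-completeness of $\mathcal{C}$, write $\widetilde{X}' \cong X' \oplus Z$ and direct-sum the given cofiber sequence $X' \to X \to X''$ with $\mathrm{id}_Z$ to obtain a cofiber sequence $\widetilde{X}' \to X \oplus Z \to X''$. Composing $V' \to \widetilde{X}' \to X \oplus Z$ and applying the octahedral axiom produces an object $W$ sitting in two cofiber sequences
\[
V' \to X \oplus Z \to W, \qquad V'' \to W \to X''.
\]
The inductive hypothesis applied to the second sequence gives $W \in \th(\mathcal{S})_{(n_1 - 1) + n_2}$, and then the first sequence exhibits $X \oplus Z$ as an object of $\th(\mathcal{S})_{n_1 + n_2}$ (taking $\widetilde{X} = X \oplus Z$, $\widetilde{X}' = V'$, $\widetilde{X}'' = W$ in the defining construction). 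Since $X$ is a retract of $X \oplus Z$ and the definition of each level already includes retracts, we conclude $X \in \th(\mathcal{S})_{n_1 + n_2}$.

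For part (1), each $\th(\mathcal{S})_n$ is closed under retracts by construction (a retract of a retract is a retract), which combined with idempotent-completeness of $\mathcal{C}$ yields idempotent-completeness of each level. To identify $\bigcup_n \th(\mathcal{S})_n$ with $\th(\mathcal{S})$, one inclusion is a routine induction. For the reverse, the union is closed under retracts and contains $\mathcal{S} \subseteq \th(\mathcal{S})_1$, so it remains to verify the two-out-of-three axiom for cofiber sequences. An easy induction shows each $\th(\mathcal{S})_n$ is closed under $\Sigma^{\pm 1}$ (inheriting this from $\mathcal{S}$). Then, given two of three terms of a cofiber sequence lying in the union, rotating the sequence places the ``unknown'' term in the middle, and part (2) supplies a level for it.

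The main obstacle I anticipate is the inductive step in (2): coordinating the ``retract-of-a-cofiber-sequence'' presentation for $X'$ with the given cofiber sequence $X' \to X \to X''$ via the octahedral axiom while keeping precise track of the levels. The direct-sum trick with $Z$ is the key move, as it replaces the retract issue for $X'$ by an honest cofiber sequence to which the octahedral decomposition can be applied cleanly.
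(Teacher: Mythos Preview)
Your proof is correct and essentially identical to the paper's: the paper also reduces to the case where $X'$ sits in an honest cofiber sequence $Y_1 \to X' \to Y_2$ (with $Y_1 \in \th(\mathcal{S})_1$, $Y_2 \in \th(\mathcal{S})_{n_1-1}$) by the same direct-sum trick, and then your octahedral-axiom object $W$ is exactly the paper's pushout $X \oplus_{X'} Y_2$, with the same two cofiber sequences used in the same way. The only cosmetic differences are that the paper phrases the diagram chase via a homotopy pushout square rather than the octahedral axiom and inducts on $n_1+n_2$ rather than $n_1$; your treatment of part (1) is slightly more explicit about closure under shifts, which the paper leaves implicit.
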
 
\begin{proof} 
By construction, each $\th(\mathcal{S})_n$ is idempotent-complete. 
To see that $\th(\mathcal{S}) = \bigcup_{n \geq 0} \th(\mathcal{S})_n$, one
easily reduces to seeing that the union is itself a thick subcategory, which
follows from the second assertion. We are thus reduced to proving (2), which
is effectively a diagram chase. 

We prove (2) by induction on $n_1 + n_2$.
We can assume $n_1, n_2 > 0$. 
Up to adding a summand to both $X'$ and $X$, we may assume 
that there exists a cofiber sequence $Y_1 \to X' \to Y_2$ with 
$Y_1 \in \th(\mathcal{S})_{1}$ and $Y_2 \in \th(\mathcal{S})_{n_1 - 1}$. 
We consider the homotopy pushout square
\[ \xymatrix{
X' \ar[d]^{\psi_1} \ar[r]^{\phi_1} &  X \ar[d]^{\psi_2}  \\
Y_2 \ar[r]^{\phi_2} &  X \oplus_{X'} Y_2
}.\]
The cofiber of $\phi_2$ is equivalent to the cofiber of $\phi_1$, i.e., $X''$. 
It follows that we have a cofiber sequence
\[ Y_2 \to X \oplus_{X'} Y_2 \to X'',
  \]
  which shows by the inductive hypothesis that $X  \oplus_{ X'} Y_2 \in
  \th(\mathcal{S})_{n_1 + n_2 - 1}$. 
  In addition, the homotopy fibers of $\psi_1 , \psi_2$ are identified with
  $Y_1 \in \th(\mathcal{S})_1$, and the cofiber sequence
  \[ Y_1 \to X \to   X \oplus_{X'} Y_2 ,\]
  with $Y_1 \in \th(\mathcal{S})_1$,
 shows that $X \in \th(\mathcal{S})_{n_1 + n_2}$.  
\end{proof}

Let $f: X \to Y$ be a map in $\mathcal{C}$. We say that $f$ is
\emph{$\mathcal{S}$-zero} if for all $A \in \mathcal{S}$, the natural map 
\[ [A, X]_* \to [A, Y]_*  \]
is zero. 
It now follows that 
if $A' \in \th(\mathcal{S})_n$ and $f_1, \dots, f_n$ are composable
$\mathcal{S}$-zero maps in $\mathcal{C}$ with $g = f_n \circ \dots \circ
f_1: X \to Y$, then 
$g_*: [A' ,X]_* \to [A', Y]_*$ is zero. Compare \cite{Ch98} for a detailed
treatment.

General thick subcategories can be tricky to work with. 
In practice, for example for classification results, it is often convenient to restrict to thick $\otimes$-ideals. 

\begin{definition} 
Let $\mathcal{C}$ be a symmetric monoidal stable $\infty$-category. 
A \textbf{thick $\otimes$-ideal} is a thick subcategory $\mathcal{D} \subset
\mathcal{C}$ which has the following additional property: if $X \in
\mathcal{C}$ and $Y \in \mathcal{D}$, then $X \otimes Y \in
\mathcal{D}$.
Given a collection $\mathcal{S} \subset \mathcal{C}$ of objects, there is a
smallest thick $\otimes$-ideal of $\mathcal{C}$ containing $\mathcal{S}$; we will
write $\th^{\otimes}(\mathcal{S})$ for this and call it the thick
$\otimes$-ideal 
\textbf{generated by $\mathcal{S}$}. 
\end{definition} 
\begin{cons}
\label{thtfilt}
In the same fashion as $\th(\mathcal{S})$, we can define a filtration
$\{\th^{\otimes}(\mathcal{S})_n\}_{n \geq 0}$ on $\th^{\otimes}(\mathcal{S})$. 
Suppose $\mathcal{S}$ is closed under direct sums and suspensions $\Sigma^i, i
\in \mathbb{Z}$.
We can define 
\[ \th^{\otimes}(\mathcal{S})_n = \th( \mathcal{S}')_n,  \]
where $\mathcal{S}'$ consists of all objects in $\mathcal{C}$ of the form $X_1
\otimes X_2$ with $X_1, \in \mathcal{C}, X_2 \in \mathcal{S}$. One sees that
$\th^{\otimes}(\mathcal{S}) = \th(\mathcal{S}')$ and that this gives an
increasing
filtration on $\tht(\mathcal{S})$. 
\end{cons}

Given a stably symmetric monoidal, idempotent-complete $\infty$-category $\mathcal{C}$, the
classification of thick $\otimes$-ideals of $\mathcal{C}$ is an important
problem. The first major classification result was by Hopkins-Smith
\cite{HS98}, who carried this out for finite $p$-local spectra, which was
followed by work of Hopkins-Neeman, Thomason, Benson-Carlson-Rickard, and
many others. In general, the classification of thick $\otimes$-ideals can be
encapsulated in 
the spectrum of Balmer \cite{Balmer05}, a topological space built from the prime
thick $\otimes$-ideals  which determines all the thick $\otimes$-ideals (at
least under some conditions, e.g., if objects are dualizable).

We give one example of how thick $\otimes$-ideals can be constructed.
Given an object $X \in \mathcal{C}$, we write $\pi_* X = \pi_*
\hom_{\mathcal{C}}(\mathbf{1}, X)$. 
We let $R_* = \pi_* \mathbf{1} = \pi_* \hom_{\mathcal{C}}(\mathbf{1},
\mathbf{1})$. 
Then $R_*$ is a graded ring and $\pi_* X$ is a graded $R_*$-module. 

\begin{example} 
Given a homogeneous ideal $I \subset R_*$, we consider the subcategory of all $X \in \mathcal{C}$ such that
for all $Y \in \mathcal{C}$, the $R_*$-module $\pi_*(X \otimes Y)$ is $I$-power
torsion. This is a thick $\otimes$-ideal. 
\end{example}

\subsection{$A$-nilpotence}

We now come to the key definition, that of $A$-nilpotence. 
While these ideas are quite classical, for a detailed presentation of this
material along these lines, we refer in particular to \cite{MNN15i}. 

Let $\mathcal{C}$ be a stably symmetric monoidal, idempotent-complete $\infty$-category and let $A \in
\alg(\mathcal{C})$.

\newcommand{\nil}{\mathrm{Nil}}

\begin{definition}[Bousfield \cite{Bou79}] 
We say that an object $X \in \mathcal{C}$ is \textbf{$A$-nilpotent} if $X$
belongs to $\tht(A)$. We will also write $\nil_A = \tht(A)$.
Note that $\nil_A$ is also the thick subcategory of $\mathcal{C}$ generated by all
objects of the form $A \otimes Y, Y \in \mathcal{C}$, i.e., $\nil_A = \tht(A) =
\th(\left\{A \otimes Y\right\}_{Y \in \mathcal{C}})$. 
\end{definition}

We give one example of this now; more will follow later. 
\begin{example} 
Suppose $\mathcal{C} = D(\mathbb{Z})$ is the derived $\infty$-category of
abelian groups and $A =\mathbb{Z}/p\mathbb{Z}$. Then an object $X \in
\mathcal{C}$ is $A$-nilpotent if and only if there exists $n \geq 0$ such that
the map $p^n: X \to X$ is nullhomotopic. In particular, it follows that such an
object is $p$-adically complete and $p$-torsion. 
\end{example}

\begin{remark} 
Suppose $X \in \mathcal{C}$ is a \emph{dualizable} object. Then the thick
$\otimes$-ideal generated by $X$ is equal to the one generated by the algebra
object $X \otimes
X^{\vee}$. Therefore, $\tht(X) = \tht(X \otimes X^{\vee})$ is the subcategory
of $X \otimes X^{\vee}$-nilpotent objects. 
It follows that any thick $\otimes$-ideal of $\mathcal{C}$ generated by a
finite collection of dualizable objects is automatically of the form $\nil_A$
for some $A$.
\end{remark} 

Let $X \in \mathcal{C}$. 
Then we can try to approximate $X$ in $\mathcal{C}$ by objects of the form $A
\otimes X', X' \in \mathcal{C}$. 
There is always a canonical way of doing so. 

Recall that $\Delta^+$ denotes the category of 
finite linearly ordered sets, so that $\Delta^+$ is 
the union of the usual simplex category $\Delta$ with an initial object $[-1]$.
\newcommand{\cbaug}{\mathrm{CB}^{\mathrm{aug}}}
\begin{cons}
Suppose that $A \in \alg(\mathcal{C})$. 
Then we can form an augmented cosimplicial diagram $\cbaug(A)$ in $\mathcal{C}$, 
\[ \cbaug(A): \mathbf{1} \to A \rightrightarrows A \otimes A \triplearrows
\dots .  \]
The underlying cosimplicial diagram $\cb(A) \xrightarrow{\mathrm{def}}
\cbaug(A)|_{\Delta}$ is called the \emph{cobar construction} or \emph{Amitsur
complex.}
The diagram $\cbaug(A)$ admits a splitting or extra degeneracy (see
\cite[Sec. 4.7.3]{Lur16}) after tensoring with $A$. 
Compare \cite[Ch. 8]{Rog08} or \cite[Sec. 2]{MNN15i}.

\end{cons}

\begin{example} 
Let $F: \mathcal{C} \to \sp$ be a functor.
It follows that we have a Bousfield-Kan 
\cite{BK}
type 
spectral sequence
\begin{equation}  \label{GeneralANSS} E_2^{s,t} = H^s( \pi_t F( X \otimes
\cb(A))) \implies \pi_{t-s}
\mathrm{Tot}( F(X \otimes \cb(A))).  \end{equation}
We will call this  the
\emph{$A$-based Adams spectral sequence.} When one takes $\mathcal{C} = \sp$,
$F = \mathrm{id}$, and $A = H \mathbb{F}_p$, then one recovers the classical
Adams spectral sequence.  
\end{example}

The first key consequence of $A$-nilpotence is that the cobar construction
always converges very nicely. Not only does it converge (to the original
object) in $\mathcal{C}$, but it does so universally in the following sense. 
We will elaborate more on this ``universal'' convergence below. 

\begin{proposition} 
\label{Adamstowerconv}
Suppose $X \in \tht(A)$. Then for any stable $\infty$-category
$\mathcal{D}$ and exact functor $F: \mathcal{C} \to
\mathcal{D}$, 
$F(X \otimes \cbaug(A)): \Delta^+ \to \mathcal{D}$ is a limit diagram. 
That is, the natural morphism
\begin{equation}  \label{keyAmap} F(X) \to \mathrm{Tot}\left( F(X \otimes A) \rightrightarrows F(X \otimes A
\otimes A ) \triplearrows \dots \right)  
\end{equation} 
is an equivalence in $\mathcal{D}$. 
\end{proposition}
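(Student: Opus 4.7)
The plan is to show that the full subcategory
\[ \mathcal{T} = \{ X \in \mathcal{C} : F(X \otimes \cbaug(A)) \text{ is a limit diagram for every exact } F : \mathcal{C} \to \mathcal{D}\} \]
is a thick subcategory containing every object of the form $A \otimes Y$, and then invoke the description of $\tht(A)$ as $\th(\{A \otimes Y\}_{Y \in \mathcal{C}})$ from the $A$-nilpotence subsection. Since the hypothesis $X \in \tht(A)$ is equivalent to $X$ lying in this thick subcategory, this is enough.

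For the base case, the construction of $\cbaug(A)$ comes with an extra degeneracy after tensoring with $A$: that is, $A \otimes \cbaug(A)$ is split augmented as a cosimplicial object in $\mathcal{C}$. Tensoring further with an arbitrary $Y$ preserves the splitting (one just tensors the splitting maps with $\mathrm{id}_Y$), and applying any exact functor $F$ also preserves the splitting (splittings are defined by ordinary equations between morphisms). Thus $F(A \otimes Y \otimes \cbaug(A))$ is a split augmented cosimplicial object in $\mathcal{D}$, and any such diagram is automatically a limit diagram. This shows $A \otimes Y \in \mathcal{T}$ for all $Y$.

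For thickness of $\mathcal{T}$, closure under retracts is immediate because limits in $\mathcal{D}$ preserve retracts. Closure under cofibers is the one step that uses stability of $\mathcal{D}$ in an essential way: given a cofiber sequence $X_1 \to X_2 \to X_3$ in $\mathcal{C}$, biexactness of $\otimes$ and exactness of $F$ yield, level by level in $\Delta^+$, a cofiber sequence
\[ F(X_1 \otimes \cbaug(A)) \to F(X_2 \otimes \cbaug(A)) \to F(X_3 \otimes \cbaug(A)) \]
of augmented cosimplicial objects in $\mathcal{D}$. Because $\mathcal{D}$ is stable, totalization is exact, so one obtains a compatible cofiber sequence between the three augmentation maps \eqref{keyAmap}; the two-out-of-three property for equivalences in a cofiber sequence then transfers the limit-diagram property from any two of $X_1, X_2, X_3$ to the third. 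Combined with the base case, this shows $\mathcal{T}$ contains $\tht(A)$ and finishes the proof.

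The one genuine subtlety, and the place I would be most careful, is the exactness of $\mathrm{Tot}$ in a general stable $\infty$-category $\mathcal{D}$ that need not admit countable limits. One can address this cleanly by first replacing $\mathcal{D}$ with its ind-completion (or more honestly, interpreting the limit diagram condition as saying the comparison is an equivalence whenever the totalization exists in $\mathcal{D}$) and arguing inductively on the filtration $\tht(A)_n$ of Construction \ref{thtfilt} so that only finite combinations of split augmented objects ever have to be totalized. This is essentially the same inductive argument repackaged, and the splittings after tensoring with $A$ remain the one nontrivial input.
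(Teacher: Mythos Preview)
Your proof is correct and follows essentially the same route as the paper: establish the result for $X = A \otimes Y$ via the extra degeneracy, then close up under thickness. The paper's own argument is just a two-sentence version of what you wrote.

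One remark on the subtlety you flag at the end. You are right to pause over the existence of $\mathrm{Tot}$ in a general stable $\mathcal{D}$, but no ind-completion or filtration induction is actually needed. In a stable $\infty$-category, a cofiber sequence of diagrams is also a (rotated) fiber sequence, and fibers are limits; hence if any two of the three totalizations exist, the third is forced to exist as the appropriate (co)fiber of the other two, and the three totalizations then sit in a cofiber sequence. So the class $\mathcal{T}$ is genuinely thick in $\mathcal{C}$ without further hypotheses on $\mathcal{D}$, and the two-out-of-three step goes through as you wrote it.
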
 

In particular, taking $F$ to be the identity functor, it follows that the
augmented cosimplicial diagram $\Delta^+ \to \mathcal{C}$ given by 
$X \to \left( X \otimes A \rightrightarrows X \otimes A \otimes A \dots
\triplearrows \right)$ is a limit diagram. This limit diagram has the additional
property that it is preserved (i.e., remains a limit diagram) after applying
any exact functor. This is a very special property of such diagrams which we
will discuss further below. 

\begin{proof} 
The basic observation is that if $X = A \otimes Y$ for some $Y \in
\mathcal{C}$, then the augmented cosimplicial diagram $X \otimes \cbaug(A)$
has an ``extra degeneracy'' or splitting \cite[Sec. 4.7.3]{Lur16}, which in
particular implies that it is a universal limit diagram. 
It follows that 
\eqref{keyAmap} is an equivalence if $X = A \otimes Y$. Since the class of $X$
for which \eqref{keyAmap} is an equivalence is thick, it follows that this
class includes $\nil_A$. 
\end{proof}

\begin{definition} 
Let $A \in \alg(\mathcal{C})$. 
Fix $X, Y \in \mathcal{C}$.
By convention, we call the \textbf{$A$-based Adams spectral sequence} 
the $\mathrm{Tot}$ spectral sequence (or BKSS) converging to $\pi_* \hom_{\mathcal{C}}(
Y, \mathrm{Tot}( \cb(X)))$ (which coincides with $\pi_*\hom_{\mathcal{C}}(Y,
X)$ if $X \in \nil_A$). 

We will say that a map $Y \to X$ has \textbf{Adams
filtration $\geq k$} if the induced map $Y \to \mathrm{Tot}(\cb(X))$ is detected 
in filtration $\geq k$ in the BKSS. This is equivalent to the existence of the
map $Y \to X$ factoring as a $k$-fold composite of maps each of which becomes
nullhomotopic after tensoring with $A$. Compare \cite[Sec. 4]{Ch98} for a
detailed treatment (albeit with arrows in the opposite direction from our
setting).  
\end{definition}

In general, $\mathrm{Tot}$ spectral sequences  (i.e., those giving the
homotopy of a cosimplicial space or spectrum) are only ``conditionally''
convergent, with potential $\lim^1$ subtleties. 
In the nilpotent case, the spectral sequence converges in the best possible
form and is essentially finitary. 
\begin{proposition}  
\label{degeneratefinitevanishingline}
Suppose $X \in \nil_A$. 
The spectral sequence \eqref{GeneralANSS} has the following property. 
There exists $N  \geq 2$ such that at the $E_N$-page, there exists a
horizontal vanishing line of some height $h$: that is, $E_N^{s,t} =0$ for $s>
h$. 
\end{proposition}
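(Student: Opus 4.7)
The approach is induction on the minimal $n$ such that $X \in \tht(A)_n$. The base case $n = 1$ is immediate: if $X$ is a retract of $A \otimes Y$, the augmented cosimplicial diagram $\cbaug(A) \otimes X$ admits an extra degeneracy coming from the $A$-algebra structure, so the totalization spectral sequence collapses at $E_2$ with $E_2^{s,t} = 0$ for $s \geq 1$, yielding $(N, h) = (2, 0)$.

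The key technical input for the inductive step is the following strengthening: for $X \in \tht(A)_n$, the map $X \otimes I^{\otimes n} \to X$ induced by $n$ copies of the natural map $\epsilon \colon I \to \mathbf{1}$, where $I = \mathrm{fib}(\mathbf{1} \to A)$, is nullhomotopic. I would prove this by a second induction on $n$. Base case: for $X$ a retract of $A \otimes Y$, the map $X \otimes I \to X$ is null because $A \otimes I \to A$ is null, as the fiber sequence $A \otimes I \to A \to A \otimes A$ is split by the multiplication $\mu$ retracting the unit $\eta \otimes 1_A$. For the inductive step, write $X$ as a retract of $\tilde X$ with a cofiber sequence $\tilde X' \to \tilde X \to \tilde X''$ and $\tilde X' \in \tht(A)_1$, $\tilde X'' \in \tht(A)_{n-1}$. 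The composite $\tilde X \otimes I^{\otimes(n-1)} \to \tilde X \to \tilde X''$ factors through the (inductively) null map $\tilde X'' \otimes I^{\otimes(n-1)} \to \tilde X''$, so $\tilde X \otimes I^{\otimes(n-1)} \to \tilde X$ lifts to some $\ell \colon \tilde X \otimes I^{\otimes(n-1)} \to \tilde X'$ through the inclusion $\iota \colon \tilde X' \to \tilde X$. By symmetric monoidal naturality of the $\epsilon$-collapse, the full composite $\tilde X \otimes I^{\otimes n} \to \tilde X$ then equals $\iota \circ (1_{\tilde X'} \otimes \epsilon) \circ (\ell \otimes 1_I)$, in which the middle factor $1_{\tilde X'} \otimes \epsilon$ is null by the base case applied to $\tilde X' \in \tht(A)_1$. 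Retracts transport nullity, so $X \otimes I^{\otimes n} \to X$ is null.

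The nullity implies that the Adams tower $X_k = X \otimes I^{\otimes k}$ has $X_n \to X_0$ nullhomotopic, so the induced filtration on $\pi_*(X)$ has $\mathrm{Fil}^n = 0$ and $E_\infty^{s,t}(X) = 0$ for $s \geq n$. A standard argument at the level of exact couples (see, e.g., \cite{MNN15i} or \cite[Appendix D.3]{lurie_sag}) promotes this $E_\infty$-vanishing to a horizontal vanishing line $E_N^{s,t} = 0$ for $s > h$ on some finite page $E_N$, since the explicit nullhomotopy of $X_n \to X_0$ equips the tower with enough rigidity to annihilate all sufficiently high-filtration classes at a bounded stage rather than only in the limit. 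The main obstacle is the symmetric monoidal naturality step in the inductive nullity argument, where one must identify the two factorizations $\ell \circ (1 \otimes \epsilon) = (1_{\tilde X'} \otimes \epsilon) \circ (\ell \otimes 1_I)$; once that is in hand, both the base case and the upgrade from $E_\infty$- to finite-page vanishing follow in a largely formal way.
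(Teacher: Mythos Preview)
Your argument is essentially correct and tracks the paper's approach closely. The paper proves this by showing that $X \in \nil_A$ is equivalent to the Adams tower $\{I^{\otimes k} \otimes X\}$ being \emph{nilpotent} (every $n$-fold structure map is null), which in turn makes the $\mathrm{Tot}$-tower of $\cb(A)\otimes X$ \emph{quickly converging}; the finite-page vanishing line then follows from the general fact (\Cref{quickconvss}) that the BKSS of any quickly converging cosimplicial spectrum has this property. Your inductive proof that $X \in \tht(A)_n$ forces $I^{\otimes n}\otimes X \to X$ to be null is exactly the argument the paper gives after \Cref{def:nilpexp}, and your bifunctoriality manipulation is correct (though the displayed identity $\ell \circ (1\otimes\epsilon) = (1_{\tilde X'}\otimes\epsilon)\circ(\ell\otimes 1_I)$ does not type-check as written; what you actually use, and correctly derive in the surrounding prose, is $(1_{\tilde X}\otimes\epsilon)\circ((\iota\circ\ell)\otimes 1_I) = \iota\circ(1_{\tilde X'}\otimes\epsilon)\circ(\ell\otimes 1_I)$).

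One point of phrasing deserves sharpening: you say the ``standard argument\ldots promotes this $E_\infty$-vanishing'' to a finite page. That is misleading, since $E_\infty$-vanishing alone does \emph{not} force a finite-page vanishing line. What actually does the work---as you correctly note in the next clause---is the nullhomotopy of $X_n \to X_0$ itself (and hence of every $n$-fold composite in the tower, since $I^{\otimes(s-n)}\otimes X$ also lies in $\tht(A)_n$). In exact-couple terms, if $i^n = 0$ on $D_1$, then for $s \geq n$ every $E_{n+1}$-cycle in filtration $s$ lies in $j(D_1^s) = j(\ker i^n) = B_{n+1}^s$, giving $E_{n+1}^{s,t}=0$ for $s \geq n$. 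The paper packages this as ``nilpotent tower $\Rightarrow$ quickly converging $\Rightarrow$ vanishing line'' and defers to \cite{Ma15}; your direct route avoids that abstraction at the cost of needing this exact-couple computation spelled out rather than cited.
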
 

We will discuss this result in the next section.

\subsection{Towers}
In this subsection, we will discuss more closely the behavior of the cobar
construction of an $A$-nilpotent object. First we will need some preliminaries
about towers. 
Let $\mathcal{C}$ be an idempotent-complete stable $\infty$-category.
\newcommand{\tow}{\mathrm{Tow}}

\begin{definition} 
A tower in $\mathcal{C}$ is  a functor
$\mathbb{Z}_{\geq 0}^{op} \to\mathcal{C}$, i.e., a sequence
\[ \dots \to X_n \to X_{n-1} \to \dots \to X_1 \to  X_0 . \]
The collection of towers in $\mathcal{C}$ is naturally organized into a stable
$\infty$-category $\tow(\mathcal{C})$. 
\end{definition}

\begin{cons}
Let $X^\bullet: \Delta \to \mathcal{C}$ be a cosimplicial object. Then the
sequence of partial totalizations 
$$\mathrm{Tot}_n(X^\bullet) \xrightarrow{\mathrm{def}} \varprojlim_{[i] \in \Delta, i \leq n} X^i$$
is naturally arranged into a tower, whose inverse limit is given by the
totalization $\mathrm{Tot}(X^\bullet)$. 
A version of the Dold-Kan correspondence, due to Lurie \cite[Sec. 1.2.4]{Lur16}, implies that
the cosimplicial object can be reconstructed from the tower. In fact, one has
an equivalence of $\infty$-categories $$\fun(\Delta, \mathcal{C}) \simeq
\tow(\mathcal{C}),$$
which sends a cosimplicial object $X^\bullet$ to its $\mathrm{Tot}$ tower. 
\end{cons}

\newcommand{\town}{\mathrm{Tow}^{\mathrm{nil}}}
\newcommand{\towf}{\mathrm{Tow}^{\mathrm{fast}}}

\begin{definition} 
A tower $\left\{X_i\right\}_{i \geq 0}$ is said to be \textbf{nilpotent} if
there exists $N  \in \mathbb{Z}_{\geq 0}$ such that for each $i \in
\mathbb{Z}_{\geq 0}$, the natural map $X_{i+N} \to X_i$ arising from the tower
is nullhomotopic. We let $\town(\mathcal{C}) \subset \tow(\mathcal{C})$ denote the 
subcategory spanned by the nilpotent towers. 
\end{definition} 

We have the following straightforward result. 

\begin{proposition} 
$\town(\mathcal{C}) \subset \tow(\mathcal{C})$ is a thick subcategory.
\end{proposition}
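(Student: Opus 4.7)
The plan is to verify the three axioms defining a thick subcategory. Axiom (1) is immediate since the zero tower has nullhomotopic transition maps, and axiom (3) (retract closure) is also immediate: if $\{X_i\}$ is a retract of $\{Y_i\}$ and $Y_{i+N} \to Y_i$ is null for all $i$, then the retract $X_{i+N} \to X_i$ is null as well. The main content lies in axiom (2), the two-out-of-three condition for cofiber sequences.

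Cofiber sequences in $\tow(\mathcal{C}) = \fun(\mathbb{Z}_{\geq 0}^{op}, \mathcal{C})$ are computed pointwise, and rotating triangles (while noting that the shifts $\Sigma$ and $\Omega$ preserve nilpotence with the same parameter) reduces the three cases of two-out-of-three to the following single assertion: if $\{X_i'\} \to \{X_i\} \to \{X_i''\}$ is a pointwise cofiber sequence with $\{X_i'\}$ and $\{X_i''\}$ nilpotent of parameters $N'$ and $N''$ respectively, then $\{X_i\}$ is nilpotent of parameter $N' + N''$.

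The key lemma I will invoke is standard in the stable theory: given a map of cofiber sequences
\[
\xymatrix{
A' \ar[r] \ar[d]_{\alpha'} & A \ar[r] \ar[d]_{\alpha} & A'' \ar[d]^{\alpha''} \\
B' \ar[r] & B \ar[r] & B''
}
\]
in which $\alpha'$ is nullhomotopic, the middle map $\alpha$ factors through the projection $A \to A''$, since the composite $A' \to A \xrightarrow{\alpha} B$ agrees with $(B' \to B) \circ \alpha'$, which is null. I apply this to the map of pointwise cofiber sequences from level $i+N'$ to level $i$; nilpotence of $\{X_i'\}$ kills the left vertical, so the transition $X_{i+N'} \to X_i$ factors as $X_{i+N'} \to X_{i+N'}'' \xrightarrow{\phi_i} X_i$ for some $\phi_i$. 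Pre-composing with the tower map $X_{i+N'+N''} \to X_{i+N'}$ and using the functoriality of the cofiber then rewrites the composite $X_{i+N'+N''} \to X_i$ as $X_{i+N'+N''} \to X_{i+N'+N''}'' \to X_{i+N'}'' \xrightarrow{\phi_i} X_i$, and the middle arrow is null by nilpotence of $\{X_i''\}$ with parameter $N''$. Hence the full composite vanishes.

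The only genuine subtlety is that the factorization $\phi_i$ produced by the lemma is not canonical and need not be compatible as $i$ varies, so one cannot expect a \emph{map} of towers $\{X_{i+N'}\} \to \{X_{i}''\}$ realizing these factorizations. This is harmless, however, since the definition of nilpotent tower only demands that each individual composite $X_{i+N} \to X_i$ be null, with no coherence required between different indices. So no real obstacle arises, and the three axioms all follow.
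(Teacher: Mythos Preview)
Your proof is correct. The paper does not give a proof at all, merely labeling the result ``straightforward,'' so your argument supplies the details the paper omits; in particular your handling of the two-out-of-three axiom via the factorization lemma and the observation that no coherence among the $\phi_i$ is needed is exactly the right way to make the ``straightforward'' claim precise.
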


\begin{definition} 
We will say that a tower $\left\{X_i\right\}_{i \geq 0}$ is \textbf{quickly
converging} if it belongs to the 
thick subcategory of $\tow(\mathcal{C})$ generated by the constant towers and
the nilpotent towers. The quickly 
converging towers form a thick subcategory $\towf(\mathcal{C}) \subset
\tow(\mathcal{C})$. 
We will also say that a cosimplicial object in $\mathcal{C}$ is
\textbf{quickly converging}
if the associated tower is quickly converging. 
\end{definition}

The first key observation about quick convergence is that it guarantees 
that limit diagrams are universal. 
\begin{proposition} 
\label{quickconvpreserved}
Let $X^\bullet \in \fun(\Delta, \mathcal{C})$ be a cosimplicial object.
Suppose $X^\bullet$ is quickly converging. 
Then: 
\begin{enumerate}
\item  
$\mathrm{Tot}(X^\bullet)$ exists in $\mathcal{C}$.
\item
$F(X^\bullet)$ is quickly converging in $\mathcal{D}$. 
\item 
For any idempotent-complete stable $\infty$-category $\mathcal{D}$ and exact
functor $F: \mathcal{C} \to \mathcal{D}$, 
the natural map 
\[ F( \mathrm{Tot}(X^\bullet)) \to \mathrm{Tot}( F(X^\bullet))  \]
is an equivalence.

\end{enumerate}

\end{proposition}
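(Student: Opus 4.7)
The plan is to pass through the Dold--Kan equivalence $\fun(\Delta, \mathcal{C}) \simeq \tow(\mathcal{C})$, under which $\mathrm{Tot}$ of a cosimplicial object corresponds to the inverse limit of its associated $\mathrm{Tot}$-tower and the quickly converging cosimplicial objects are by definition those whose $\mathrm{Tot}$-tower lies in $\towf(\mathcal{C})$. The proposition therefore reduces to the analogous statement for towers: for $\{X_i\} \in \towf(\mathcal{C})$, the limit $\varprojlim_i X_i$ exists in $\mathcal{C}$, the tower $\{F(X_i)\}$ lies in $\towf(\mathcal{D})$, and the comparison $F(\varprojlim_i X_i) \to \varprojlim_i F(X_i)$ is an equivalence for every exact functor $F : \mathcal{C} \to \mathcal{D}$ into an idempotent-complete stable $\infty$-category.

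Statement (2) I would handle first: post-composition with $F$ gives an exact functor $F_* : \tow(\mathcal{C}) \to \tow(\mathcal{D})$ which evidently sends constant towers to constant towers, and sends $\town(\mathcal{C})$ to $\town(\mathcal{D})$ with the same nilpotence bound since $F$ carries null maps to null maps; because $\towf$ is by definition the thick subcategory generated by these two classes, $F_*$ preserves it. For (1) and (3) the strategy is to consider the full subcategory $\mathcal{T} \subseteq \tow(\mathcal{C})$ of towers $\{X_i\}$ such that $\varprojlim_i X_i$ exists in $\mathcal{C}$ and $F(\varprojlim_i X_i) \to \varprojlim_i F(X_i)$ is an equivalence for every exact $F$ as above, and to show that $\mathcal{T}$ is a thick subcategory containing both generating classes. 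Constants lie in $\mathcal{T}$ trivially. For a nilpotent tower $\{X_i\}$ the claim is $\varprojlim_i X_i \simeq 0$; to see this I test against an arbitrary $Y \in \mathcal{C}$ and note that $\{\hom_{\mathcal{C}}(Y, X_i)\}$ is still nilpotent with the same bound, so a cofinal shift makes all of its transition maps null, whereupon the standard fibre-of-$(\mathrm{id} - \mathrm{shift})$ formula for the limit in spectra collapses to the fibre of a self-equivalence and yields $0$. Hence the zero object represents $\varprojlim_i X_i$, and the identical argument applied to $F_*\{X_i\}$, still nilpotent by (2), gives $\varprojlim_i F(X_i) \simeq 0 \simeq F(0)$.

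What remains is thickness of $\mathcal{T}$, which reduces to closure under retracts (immediate from idempotent-completeness of $\mathcal{C}$ and $\mathcal{D}$) and under cofibre sequences. For a cofibre sequence $\{X_i'\} \to \{X_i\} \to \{X_i''\}$ in $\tow(\mathcal{C})$ with outer terms in $\mathcal{T}$, I would set $L := \mathrm{fib}\bigl(\varprojlim_i X_i'' \to \Sigma \varprojlim_i X_i'\bigr)$ using the connecting map of the cofibre sequence at the tower level, and identify $L$ with $\varprojlim_i X_i$ by comparing, for each $Y \in \mathcal{C}$, the fibre sequence defining $\hom_{\mathcal{C}}(Y, L)$ with the fibre sequence obtained by applying $\varprojlim_i \hom_{\mathcal{C}}(Y, -)$ to $\{X_i'\} \to \{X_i\} \to \{X_i''\}$; preservation by $F$ then follows from exactness of $F$. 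The main obstacle I expect is precisely this fibre-sequence step: because $\mathcal{C}$ need not admit countable products, one cannot compute $\varprojlim_i X_i$ by a naive equalizer formula inside $\mathcal{C}$ and is forced to argue representably through mapping spectra, which is also the device used in the nilpotent case above.
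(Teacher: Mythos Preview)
Your proposal is correct and follows essentially the same strategy as the paper: reduce to towers via Dold--Kan, then argue that the class of towers satisfying the three conclusions is thick and contains the constant and nilpotent towers. The paper compresses this into two sentences, while you supply the details (limit of a nilpotent tower is zero via a representable argument, closure under cofibre sequences via the fibre construction $L = \mathrm{fib}(\varprojlim X_i'' \to \Sigma \varprojlim X_i')$); your worry about the representable step is not a genuine obstacle, since mapping into spectra and using that limits commute with limits there is exactly the right device.
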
 

That is, the limit diagram defined by $X^\bullet$ is \emph{universal} for
exact functors. In the language of pro-objects, the pro-object defined by the
tower $\{\mathrm{Tot}_n(X^\bullet)\}$ is constant. 
Compare the discussion in \cite[Sec. 3.2]{MGal}. 

\begin{proof} 
Any exact functor respects finite limits, so we can replace the totalization
by the inverse limit of the $\mathrm{Tot}$ tower. It thus suffices  
to show that if $\left\{Y_i\right\}$ is a quickly converging \emph{tower}, 
then 
the natural analogs of the three statements hold. 
That is, $\varprojlim Y_i$ exists in $\mathcal{C}$, $\left\{F(Y_i)\right\}$ is quickly converging, 
and the natural map $F( \varprojlim Y_i )\to \varprojlim F(Y_i)$ is equivalence. 
The class of towers $\left\{Z_i\right\}$ for which these three assertions is thick; it clearly contains
the constant and nilpotent towers, so it contains the quickly converging towers.
This proves the result. \end{proof}

Let $X^\bullet$ be a cosimplicial spectrum. In this case, we have the
classical Bousfield-Kan spectral sequence converging to $\pi_*
\mathrm{Tot}(X^\bullet)$. 
\begin{proposition} 
\label{quickconvss}
Let $X^\bullet$ be a quickly converging cosimplicial spectrum. 
Then the BKSS converging to $\pi_* \mathrm{Tot}(X^\bullet)$ has a horizontal vanishing line at some finite stage: i.e., 
there is $N  \geq 2$ and $h \geq 0$ such that $E_N^{s,t} =0$ for $s>
h$. 
\end{proposition}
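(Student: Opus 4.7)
The plan is to transport the problem to towers via the Dold--Kan equivalence $\fun(\Delta,\sp)\simeq\tow(\sp)$: under this equivalence $\mathrm{Tot}(X^\bullet)$ becomes $\varprojlim Y_i$ for $Y_i = \mathrm{Tot}_i(X^\bullet)$, and the Bousfield--Kan spectral sequence converging to $\pi_\ast\mathrm{Tot}(X^\bullet)$ becomes the standard spectral sequence of the filtered spectrum $\{Y_i\}$. Since by definition $X^\bullet$ is quickly converging iff the tower $\{Y_i\}$ is, it suffices to show that the class $\mathcal{T}\subset\tow(\sp)$ of towers whose associated spectral sequence admits a horizontal vanishing line at some finite page is a thick subcategory containing both the constant and nilpotent towers.

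For a constant tower, the layers $\mathrm{fib}(Y_i\to Y_{i-1})$ vanish for $i\geq 1$, so the $E_1$-page already has a vanishing line at height $0$, and constants lie in $\mathcal{T}$. For a nilpotent tower with $Y_{i+N}\to Y_i$ nullhomotopic for all $i$, I would run the standard diagram chase in the associated exact couple: a class surviving to $E_r^{s,t}$ is represented by an element of $\pi_{t-s} Y_s$ admitting a lift $r-1$ steps up the tower to $\pi_{t-s} Y_{s+r-1}$. Once $r-1\geq N$, any such lift must factor through the nullhomotopic composition $Y_{s+N}\to Y_s$, and a straightforward bookkeeping argument then yields $E_{N+1}^{s,t}=0$ for $s\geq N$, giving the desired vanishing line.

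The main obstacle is closure of $\mathcal{T}$ under cofiber sequences. Given a cofiber sequence $\{Y'_i\}\to\{Y_i\}\to\{Y''_i\}$ with $Y', Y''\in\mathcal{T}$, one regards it as a cofiber sequence of filtered spectra. The exact-couple formalism produces a long exact sequence
\[
\cdots\to E_r^{s,t}(Y')\to E_r^{s,t}(Y)\to E_r^{s,t}(Y'')\to E_r^{s+1,t}(Y')\to\cdots
\]
on each finite page, compatibly in $r$ (for $r$ past the vanishing stages of both $Y'$ and $Y''$, so that the $E_r$-pages behave functorially along the cofiber sequence). Choosing $r$ and the height $s$ larger than both vanishing stages $(N_1,h_1)$ and $(N_2,h_2)$ forces the outer terms to vanish and hence $E_r^{s,t}(Y)=0$, producing a vanishing line for $Y$. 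Closure under retracts is immediate, so $\mathcal{T}$ is thick. The remaining technical point is producing the $E_r$-LES cleanly; this is essentially formal from the exact-couple description (compare \cite[Sec.~2]{MNN15i}), but is the only step that requires real care. Combining thickness with the base cases for constants and nilpotents proves the proposition.
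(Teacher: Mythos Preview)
The paper does not prove this proposition; it cites \cite[Prop.~3.12]{Ma15}. Your overall strategy---pass to the $\mathrm{Tot}$ tower and run a thick-subcategory argument---is the right one, and your treatment of the constant and nilpotent base cases is essentially correct. But the closure-under-cofibers step has a genuine gap.

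The long exact sequence of $E_r$-pages you invoke does not exist for $r\geq 2$. Passing from $E_r$ to $E_{r+1}$ is taking homology, and homology does not preserve exactness; the LES at $E_1$ therefore does not propagate. Your parenthetical hedge (``for $r$ past the vanishing stages of both $Y'$ and $Y''$'') does not rescue this: even once both outer spectral sequences have degenerated, the $E_\infty$-terms are associated gradeds of compatible filtrations on the long exact sequence $\pi_*Y'_\infty\to\pi_*Y_\infty\to\pi_*Y''_\infty$, and associated gradeds of an exact sequence are not exact in general (filtration shifts occur). Concretely, $E_r^{s,t}$ is an \emph{image} $\mathrm{Im}\bigl(\pi_{t-s}\,\mathrm{fib}(Y_{s+r-1}\to Y_{s-1})\to\pi_{t-s}\,\mathrm{fib}(Y_s\to Y_{s-r})\bigr)$; both source and target are exact functors of the tower, but ``image'' is not, so vanishing of the outer images does not force vanishing of the middle one.

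The fix is to keep the spectral sequence out of the thick-subcategory step entirely. Show first that a tower $\{Y_i\}$ is quickly converging if and only if its \emph{fiber tower} $\{F_i=\mathrm{fib}(\varprojlim Y_j\to Y_i)\}$ is nilpotent. This equivalence \emph{is} a clean thick-subcategory argument: inverse limits commute with fiber sequences in $\sp$, so a cofiber sequence of towers yields a cofiber sequence of fiber towers, and $\town(\sp)$ is thick. Conversely, if $\{F_i\}$ is nilpotent then $\{Y_i\}$ is the cofiber of a map from a nilpotent tower to a constant tower, hence quickly converging. Now only a single direct spectral-sequence computation remains: if $\{F_i\}$ is nilpotent with constant $N$, deduce the vanishing line for $\{Y_i\}$. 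This is where the exact-couple manipulation you sketched for the nilpotent case actually belongs; since $\{Y_i\}$ is now an explicit extension of a constant tower by the nilpotent $\{F_i\}$, the argument goes through without needing any comparison across a general cofiber sequence.
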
 

We refer to \cite[Prop. 3.12]{Ma15} for an account of this result in this
language. Of course, the idea is much older, and seems to be elucidated 
in \cite{HPSgeneric}, where the authors consider the more general case of
spectral sequences that have a vanishing line of some (possibly
positive) slope at a finite stage. It appears prominently in the proof  of
many foundational results in chromatic homotopy theory.

Quick convergence indicates that a 
homotopy limit in a stable $\infty$-category which is a priori infinite (such as a totalization)
actually behaves like a finite one, up to taking retracts.\footnote{We remind
the reader that taking retracts, in the $\infty$-categorical setting, is not a
finite homotopy limit.} 
For example, any exact functor preserves finite limits. An exact functor need
\emph{not} preserve totalizations, but it will preserve quickly converging ones
by \Cref{quickconvpreserved}. 

Here is an instance of this phenomenon. 
\begin{proposition} 
\label{quickconvthicksubcat}
Let $X^\bullet \in \fun(\Delta, \mathcal{C})$ 
be a quickly converging cosimplicial object. Then $\mathrm{Tot}(X^\bullet)$
belongs to the thick subcategory of $\mathcal{C}$ generated by the
$\left\{X^i\right\}_{i \in \mathbb{Z}_{\geq 0}}$.
\end{proposition}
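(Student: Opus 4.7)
The plan is to apply \Cref{quickconvpreserved} to a carefully chosen exact functor that annihilates precisely the thick subcategory generated by the $\{X^i\}$.

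Set $\mathcal{T} := \th(\{X^i\}_{i \geq 0}) \subseteq \mathcal{C}$ and let $\mathcal{D}$ denote the idempotent completion of the Verdier quotient $\mathcal{C}/\mathcal{T}$. Then $\mathcal{D}$ is an idempotent-complete stable $\infty$-category, and the canonical localization $F \colon \mathcal{C} \to \mathcal{D}$ is exact with the property that $F(Y) \simeq 0$ if and only if $Y \in \mathcal{T}$.

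By construction every $X^i$ lies in $\mathcal{T}$, so $F(X^i) \simeq 0$ in $\mathcal{D}$. Consequently the composite cosimplicial object $F \circ X^\bullet \colon \Delta \to \mathcal{D}$ is levelwise zero, so its totalization is $0$. Since $X^\bullet$ is quickly converging, \Cref{quickconvpreserved} applied to $F$ yields
$$F(\mathrm{Tot}(X^\bullet)) \simeq \mathrm{Tot}(F \circ X^\bullet) \simeq 0,$$
which forces $\mathrm{Tot}(X^\bullet) \in \ker(F) = \mathcal{T}$, as desired.

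The main subtlety lies in the identification $\ker(F) = \mathcal{T}$ used in the final step: we cannot take $\mathcal{D}$ to be the bare Verdier quotient $\mathcal{C}/\mathcal{T}$ because \Cref{quickconvpreserved} is stated for idempotent-complete targets. Passing to the idempotent completion is harmless, since $\mathcal{C}/\mathcal{T}$ embeds fully faithfully into it and the standard characterization of a thick subcategory as the kernel of its Verdier localization then gives the required equality. Beyond this bookkeeping the argument is entirely formal, and relies only on the ``universality'' of quickly converging limit diagrams recorded in \Cref{quickconvpreserved}.
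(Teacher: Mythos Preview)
Your argument is correct and takes a genuinely different route from the paper's.

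The paper argues directly at the level of towers: each $\mathrm{Tot}_n(X^\bullet)$ lies in $\th(\{X^i\})$ because it is a finite limit, so it suffices to show that for a quickly converging tower $\{Y_i\}$ the inverse limit $Y$ is a \emph{retract} of some $Y_i$. This is obtained by applying \Cref{quickconvpreserved} to the contravariant functor $\hom_{\mathcal{C}}(-,Y)\colon \mathcal{C}\to \sp^{op}$, which turns the inverse limit into a colimit and shows that the identity of $Y$ factors through some $Y_i$. Your approach instead packages membership in $\mathcal{T}$ as vanishing in the (idempotent-completed) Verdier quotient and appeals to \Cref{quickconvpreserved} once for that quotient functor. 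Your argument is shorter and purely formal, but it imports the machinery of Verdier quotients (and implicitly a smallness or universe-enlargement step to form $\mathcal{C}/\mathcal{T}$). The paper's argument is more elementary and yields the sharper conclusion that $\mathrm{Tot}(X^\bullet)$ is actually a retract of a single partial totalization $\mathrm{Tot}_n(X^\bullet)$, not merely an object of the thick subcategory it generates.
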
 
\begin{proof} 
Note that each $\mathrm{Tot}_n (X^\bullet)$ belongs to the thick subcategory
generated by the $\left\{X^i\right\}$ as $\mathrm{Tot}_n$ is a finite homotopy
limit.
It suffices to show that if 
$\left\{Y_i\right\}_{i \geq 0}$ is a quickly converging tower, then
$Y = \varprojlim Y_i$ belongs to the thick subcategory generated by the
$\left\{Y_i\right\}$. In fact, this follows from the claim that $Y$ is a retract
of some $Y_i$. 

To see this, 
we need to produce a map $Y_i \to Y$ for some $i$ such that the composite 
$Y \to Y_i \to Y$ is the identity. 
We consider the functor $\mathcal{C} \to \sp^{op}$ given by $X \mapsto
\hom_{\mathcal{C}}(X, Y)$. 
Since the tower $\left\{Y_i\right\}$ is quickly converging, it follows by
applying this functor (and dualizing) that the map of spectra
\( \varinjlim_i  \hom_{\mathcal{C}}(Y_i, Y) \to \hom_{\mathcal{C}}(Y, Y) \)
is an equivalence, in view of \Cref{quickconvpreserved}. Unwinding the
definitions, it follows that 
there exists $f_i : Y_i \to Y$ such that $Y \to Y_i \stackrel{f_i}{\to} Y$ is
the identity, as desired. 
\end{proof} 

\begin{example} 
Let $G$ be a finite group and let $X \in \fun(BG, \mathcal{C})$. 
We  can form the homotopy fixed points $X^{hG} \in \mathcal{C}$, which can be
recovered as the totalization of a cosimplicial object. 
Namely, choosing the standard simplicial model $EG_\bullet$ of $EG$ as a free
$G$-space, we have
\[  X^{hG} = \mathrm{Map}_G( EG, X) = 
\mathrm{Tot} \left(\mathrm{Map}_G( EG_\bullet, X)\right).
\]
Here $EG_\bullet$ is levelwise a finite free $G$-set; namely, 
$EG_n = G^{n+1}$. Therefore, we have a cosimplicial object
$\widetilde{X}^\bullet$ such that $\widetilde{X}^n = \prod_{G^{n}} X$ such that
$X^{hG} = \mathrm{Tot}(\widetilde{X}^\bullet)$. 

Suppose this cosimplicial object is quickly converging. Then we find: 
\begin{enumerate}
\item $X^{hG}$ belongs to the thick subcategory of $\mathcal{C} $ generated by
$X$. 
\item For any exact functor $F: \mathcal{C} \to \mathcal{D}$, the natural map
$F(X^{hG}) \to F(X)^{hG}$ is an equivalence. 	
\end{enumerate}
In particular, taking homotopy fixed points behaves like a finite homotopy
limit in this case. 
We will return to this example below (see subsection \ref{subsec:nilpaction}) and show the connection to Tate
vanishing. 

\end{example}

Our goal in this subsection is to show that the theory of ``$A$-nilpotence''
is closely connected to the theory of quickly converging towers.
This will take place through the cobar construction. 
First, we will describe the associated tower to the cosimplicial object
$\cb(A)$. 
We now assume that $\mathcal{C}$ is an idempotent-complete, stably
\emph{symmetric monoidal} $\infty$-category. 
Compare \cite[Sec. 4]{MNN15i}. 
\newcommand{\adm}{\mathrm{Adams}}

\newcommand{\const}{\mathrm{Const}}

\begin{cons}
Fix $A \in \alg(\mathcal{C})$. 
We let $I$ be the fiber of $\mathbf{1} \to A$, so that we have a natural map
$I \to \mathbf{1}$. Taking the tensor powers, we obtain a natural tower
$\left\{I^{\otimes k}\right\}_{k \geq 0}$ in $\mathcal{C}$.

Then we define an exact functor
\[  \adm_A :  \mathcal{C} \to \tow(\mathcal{C}) \]
which sends $X \in \mathcal{C}$ to the tower $\adm_A(X) = \left\{I^{\otimes
k} \otimes X \right\}_{ k \geq 0}$, which we call the \textbf{Adams tower}. 
We note that we have a natural map of towers 
\begin{equation} \label{admtoconst} \adm_A(X) \to \const(X),  \end{equation}
where $\const(X)$ denotes the 
constant tower at $X$. 
\end{cons}

We have the following basic result. Although surely folklore, it is a little
tricky to track down in the older literature. We refer to  \cite[Prop.
2.14]{MNN15i} for a proof in modern language. 
\begin{proposition} 
The $\mathrm{Tot}$ tower $\{\mathrm{Tot}_n( \cb(A) \otimes X)\}$ is equivalent
to the cofiber of the map of towers
\eqref{admtoconst}. 
\end{proposition}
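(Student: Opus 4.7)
The plan is to reformulate the proposition as a fiber-sequence statement: establish, by induction on $n \geq 0$, a natural equivalence
$$\mathrm{fib}\bigl(X \to \mathrm{Tot}_n(\cb(A) \otimes X)\bigr) \simeq I^{\otimes(n+1)} \otimes X,$$
compatible with the structure maps of the towers. Rewriting in terms of cofibers and shifting the index by one, this identifies the cofiber of $\adm_A(X) \to \const(X)$ at level $k$ with $\mathrm{Tot}_{k-1}(\cb(A) \otimes X)$, which is the claim.

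The base case $n=0$ is immediate: $\mathrm{Tot}_0(\cb(A) \otimes X) = A \otimes X$ (since $\Delta_{\leq 0}$ is a one-object category), and the required fiber sequence is $I \otimes X \to X \to A \otimes X$, obtained by tensoring $I \to \mathbf{1} \to A$ with $X$.

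For the inductive step, I would invoke the standard fiber sequence for successive partial totalizations of a cosimplicial object $Y^\bullet$,
$$\Omega^n N^n(Y^\bullet) \to \mathrm{Tot}_n(Y^\bullet) \to \mathrm{Tot}_{n-1}(Y^\bullet),$$
where $N^n(Y^\bullet)$ is the joint fiber of the codegeneracy maps $Y^n \to Y^{n-1}$. Applied to $Y^\bullet = \cb(A) \otimes X$ and using that each codegeneracy is an instance of multiplication $A \otimes A \to A$, which is split by the unit $\eta: \mathbf{1} \to A$, a direct computation identifies
$$N^n(\cb(A) \otimes X) \simeq \Sigma^n\bigl(A \otimes I^{\otimes n} \otimes X\bigr),$$
so that $\Omega^n N^n \simeq A \otimes I^{\otimes n} \otimes X$. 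Now apply the fiber-of-composites sequence $\mathrm{fib}(f) \to \mathrm{fib}(gf) \to \mathrm{fib}(g)$ to the composite $X \to \mathrm{Tot}_n \to \mathrm{Tot}_{n-1}$; invoking the inductive hypothesis yields a fiber sequence
$$\mathrm{fib}\bigl(X \to \mathrm{Tot}_n\bigr) \to I^{\otimes n} \otimes X \to A \otimes I^{\otimes n} \otimes X,$$
in which the right-hand map is identified with $\eta \otimes \mathrm{id}$. Its fiber is $I^{\otimes(n+1)} \otimes X$, obtained by tensoring $I \to \mathbf{1} \to A$ with $I^{\otimes n} \otimes X$, completing the induction. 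Compatibility with the tower structure maps then follows from naturality of the fiber sequences involved.

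The main obstacle is the identification of the normalized term $N^n(\cb(A) \otimes X)$ together with the connecting map: verifying that the latter is precisely $\eta \otimes \mathrm{id}$ tensored with the appropriate copies of $I$ requires careful tracking of the splittings of the codegeneracies in the Amitsur complex. A more conceptual alternative would be to transport the problem through Lurie's equivalence $\fun(\Delta, \mathcal{C}) \simeq \tow(\mathcal{C})$ and identify both towers directly via the universal property of the augmentation $\mathbf{1} \to A$, bypassing the normalized-cochains computation.
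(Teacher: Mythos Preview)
The paper does not actually prove this proposition: it states that the result is ``surely folklore'' and refers the reader to \cite[Prop.~2.14]{MNN15i} for a proof in modern language. So there is no in-text argument to compare against.

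Your inductive argument is the standard one and is essentially what you would find in the cited reference: identify the successive fibers of the $\mathrm{Tot}$ tower with $A \otimes I^{\otimes n} \otimes X$ via the normalized cochain term, and then peel off one factor of $I$ at a time. You have correctly isolated the only delicate point, namely that the connecting map in the fiber sequence
\[
\mathrm{fib}(X \to \mathrm{Tot}_n) \;\longrightarrow\; I^{\otimes n} \otimes X \;\longrightarrow\; A \otimes I^{\otimes n} \otimes X
\]
is $\eta \otimes \mathrm{id}$. This does require tracking the codegeneracy splittings through the matching-object description, but it is a routine (if tedious) check; your acknowledgment of it is appropriate rather than a gap.

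One minor remark: your reindexing observation (cofiber at level $k$ matches $\mathrm{Tot}_{k-1}$) is correct, and the paper's word ``equivalent'' should be read up to this harmless shift. At level $k=0$ the Adams tower is $X$ itself and the cofiber is zero, so the two towers literally differ by a shift; the content is that they define the same filtration of $X$.
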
 

With this in mind, we can state the following basic characterization of
$A$-nilpotence.
It implies \Cref{degeneratefinitevanishingline}, in view of 
\Cref{quickconvss}. We leave the details to the reader. Compare the 
discussion following \Cref{def:nilpexp} below. 

\begin{proposition} 
The following are equivalent for $X \in \mathcal{C}$: 
\begin{enumerate}
\item $X \in \nil_A$. 
\item The cobar construction $\cb(A) \otimes X$ is quickly converging and has
homotopy limit given by $X$. 
\item The Adams tower $\adm_A(X)$ is nilpotent.
\end{enumerate}
\end{proposition}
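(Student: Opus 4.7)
The plan is to prove the cycle $(1) \Rightarrow (3) \Rightarrow (2) \Rightarrow (1)$, using as the main lever the cofiber sequence of towers
\[ \adm_A(X) \longrightarrow \const(X) \longrightarrow \{\mathrm{Tot}_n(\cb(A) \otimes X)\} \]
recorded in the proposition just above the statement, combined with the thickness of $\town(\mathcal{C})$ and $\towf(\mathcal{C})$ inside $\tow(\mathcal{C})$.

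For $(1) \Rightarrow (3)$, I would show that $\mathcal{S} := \{X \in \mathcal{C} : \adm_A(X) \in \town(\mathcal{C})\}$ is a thick $\otimes$-ideal containing $A$. Thickness is immediate since $\adm_A$ is exact and $\town(\mathcal{C})$ is thick; the $\otimes$-ideal property follows because $\adm_A(X \otimes Y) \simeq \adm_A(X) \otimes Y$ level-wise and tensoring with a fixed object preserves nullhomotopies. For the base case $A \in \mathcal{S}$, tensor the fiber sequence $I \to \mathbf{1} \to A$ with $A$ to get $I \otimes A \to A \to A \otimes A$. The map $A \to A \otimes A$ is split by the multiplication $\mu \colon A \otimes A \to A$, and in a stable $\infty$-category the fiber map of a split monomorphism is null, so $I \otimes A \to A$ is null. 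Tensoring further with $I^{\otimes k}$ shows every transition map of $\adm_A(A)$ is null, so $\adm_A(A)$ is nilpotent with constant $N = 1$, and thickness of $\mathcal{S}$ propagates this to all $X \in \nil_A$ (with possibly larger $N$).

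For $(3) \Rightarrow (2)$, the cofiber sequence of towers exhibits $\{\mathrm{Tot}_n(\cb(A) \otimes X)\}$ as the cofiber of $\adm_A(X) \to \const(X)$. Since $\adm_A(X) \in \town(\mathcal{C}) \subset \towf(\mathcal{C})$ and constant towers are trivially quickly converging, the cofiber lies in $\towf(\mathcal{C})$; thus $\cb(A) \otimes X$ is quickly converging. To identify the limit, \Cref{quickconvpreserved} gives that $\mathrm{Tot}(\cb(A) \otimes X)$ exists, and taking inverse limits of the cofiber sequence of towers yields a fiber sequence
\[ \varprojlim \adm_A(X) \longrightarrow X \longrightarrow \mathrm{Tot}(\cb(A) \otimes X). \]
It remains to check that the inverse limit of a nilpotent tower vanishes, which I would verify by applying $\hom_\mathcal{C}(Z, -)$ for arbitrary $Z$ and invoking the Milnor sequence: for a tower of abelian groups whose $N$-fold transition is zero, both $\lim$ and $\lim^1$ vanish by a telescoping sum argument. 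Hence $X \xrightarrow{\simeq} \mathrm{Tot}(\cb(A) \otimes X)$.

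For $(2) \Rightarrow (1)$, \Cref{quickconvthicksubcat} applied to the quickly converging cosimplicial object $\cb(A) \otimes X$ places $X \simeq \mathrm{Tot}(\cb(A) \otimes X)$ in the thick subcategory generated by the terms $A^{\otimes(i+1)} \otimes X$, each of which already lies in $\nil_A = \tht(A)$; hence $X \in \nil_A$. The main obstacle is the split-fiber-sequence step in $(1) \Rightarrow (3)$, where one genuinely uses the unital structure of $A$ rather than purely formal thick-subcategory reasoning; the rest is bookkeeping with the thickness properties established earlier in the section.
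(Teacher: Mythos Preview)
Your proposal is correct and matches the approach the paper implicitly outlines (the paper itself defers the proof to the reader, pointing forward to the discussion after \Cref{def:nilpexp} for the key ingredient). Your thick-$\otimes$-ideal packaging of $(1)\Rightarrow(3)$ is a slight repackaging of the paper's filtration-level argument that $X\in\tht(A)_n$ forces $I^{\otimes n}\otimes X\to X$ to be null, but the substance---that the unit $A\to A\otimes A$ is split by multiplication, forcing $I\otimes A\to A$ to be null---is the same; the steps $(3)\Rightarrow(2)$ and $(2)\Rightarrow(1)$ via the cofiber sequence of towers and \Cref{quickconvthicksubcat} are exactly what the paper's setup anticipates.
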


\begin{example} 
We consider again our basic example of $A = \mathbb{Z}/p\mathbb{Z}$ in
$\mathcal{C} = \md(\mathbb{Z})$. Here the Adams tower is given by 
\[ \dots \to \mathbb{Z} \stackrel{p}{\to} \mathbb{Z}.  \]
We thus recover the fact that $X \in \mathcal{C}$ is $A$-nilpotent if and only
if some power of $p$ annihilates $X$. 
\end{example} 

\subsection{Exponents of nilpotence}

We keep the same notation from the previous subsection. 
We recall that $\nil_A = \tht(A)$ and $\tht(A)$ has a natural \emph{filtration.}
This leads to the following natural definition: 

\begin{definition} 
\label{def:nilpexp}
Fix $X \in \nil_A$.
We will say that $X$ has \textbf{nilpotence exponent $\leq n$} if the following
\emph{equivalent} conditions hold: 
\begin{enumerate}
\item $ X \in \tht(A)_{n}$ in the notation of \Cref{thtfilt}. 
\item The map $I^{\otimes n} \to \mathbf{1}$ becomes null after tensoring
with $X$.
\end{enumerate}
We will write $\exp_A(X)$ for the minimal $n$ such that $X$ has nilpotence
exponent $\leq n$.
\end{definition}

We outline the argument that conditions (1) and (2) are equivalent. 
\begin{enumerate}
\item  
If $X \in \tht(A)_n$, we claim that $I^{\otimes n} \otimes X \to X $ is
null. For $n = 0$, the claim is obvious. 
For $n = 1$, if $X \in \tht(A)_1$, then $X$ is a retract of $A \otimes Y$ for
some $Y$, and the map $I \otimes X \to X$ is nullhomotopic because $ X \to A
\otimes X$ admits a section. In general, we can induct on $n$ and use the
inductive description of $\tht(A)_n$.
\item Suppose $\psi: I^{\otimes n} \otimes X \to X$ is nullhomotopic. 
Then $X$ is a retract of the cofiber of $\psi$. It thus follows that we need to
show that the cofiber of $\psi$ belongs to $\tht(A)_n$. 
It suffices to show that the cofiber of $I^{\otimes n} \to \mathbf{1}$ belongs
to $\tht(A)$, but it is an extension of the $n$ objects 
$\mathrm{cofib}(I^{\otimes (k+1)} \to I^{\otimes k}) 
\simeq A \otimes I^{k}
$ for $k \leq n-1$. 
\end{enumerate}

The exponent of nilpotence is closely related to 
the behavior of the $A$-based Adams spectral sequence. 
We recall that if $X \in \nil_A$, then the $A$-based Adams spectral sequence
collapses at a finite stage with a horizontal vanishing line. 

\begin{proposition}[{Cf. \cite[Prop. 4.4]{Ch98}, \cite[Prop. 2.26]{MNN15ii}}] 
Suppose $X \in \nil_A$ and $\exp_A(X) \leq n$. Then the $A$-based Adams
spectral sequence for $\pi_* \hom_{\mathcal{C}}(Y, X)$ satisfies $E_{n+1}^{s,t}
 = 0$ for $s \geq n$.
\end{proposition}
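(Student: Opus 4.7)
The plan is to translate the hypothesis via the equivalence established immediately after \Cref{def:nilpexp}, namely $\exp_A(X) \leq n \iff X \in \tht(A)_n$, and to induct on $n$ along the filtration of \Cref{thtfilt}. The base case $n = 0$ is vacuous. For $n = 1$, $X$ is a retract of some $A \otimes Z$, and the augmented cosimplicial object $\cbaug(A) \otimes A \otimes Z$ admits an extra degeneracy (the same observation used in the proof of \Cref{Adamstowerconv}). Applying $\hom_{\mathcal{C}}(Y, -)$ gives a split cosimplicial spectrum, whose BKSS is concentrated in cohomological degree zero from the $E_2$-page onward. In particular $E_2^{s,*} = 0$ for $s \geq 1$, and this vanishing is preserved by retracts.

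For the inductive step $n \geq 2$, the explicit description of $\tht(A)_n$ presents $X$ as a retract of some $\widetilde X$ fitting into a cofiber sequence $\widetilde X' \to \widetilde X \to \widetilde X''$ with $\widetilde X' \in \tht(A)_1$ and $\widetilde X'' \in \tht(A)_{n-1}$. The induction hypothesis supplies $E_2^{s,*}(\widetilde X') = 0$ for $s \geq 1$ and $E_n^{s,*}(\widetilde X'') = 0$ for $s \geq n-1$; since each $E_{r+1}$-page is a subquotient of $E_r$, these vanishing ranges propagate to every later page. Applying $\hom_{\mathcal{C}}(Y, \cb(A) \otimes -)$ to the cofiber sequence yields a cofiber sequence of cosimplicial spectra, and the associated BKSS's fit into a long exact sequence at each page:
\[ E_r^{s,t}(\widetilde X') \to E_r^{s,t}(\widetilde X) \to E_r^{s,t}(\widetilde X'') \to E_r^{s+1,t}(\widetilde X'). \]
Taking $r = n+1$ and $s \geq n$, both outer terms vanish (the left by $s \geq 1$, the right by $s+1-1 \geq n-1$ and the right outer term by $s \geq n-1$), forcing $E_{n+1}^{s,t}(\widetilde X) = 0$. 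Taking the retract $X \subset \widetilde X$ and using functoriality of the BKSS then yields $E_{n+1}^{s,t}(X) = 0$, completing the induction.

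The main obstacle I expect to encounter is justifying the long exact sequence at higher $E_r$-pages. On the $E_1$-page it is merely the long exact sequence of homotopy groups of a levelwise cofiber sequence of cosimplicial spectra. The promotion to arbitrary $E_r$ rests on the standard fact that a cofiber sequence of filtered spectra (equivalently, of towers, via the Dold--Kan correspondence invoked in the excerpt) yields cofiber sequences of derived exact couples, and hence LES's of spectral sequence pages. Once this framework is in place, the remainder of the argument is essentially a two-line diagram chase; the delicate part is the bookkeeping for the base case (the extra-degeneracy argument) and the compatibility of retracts with the BKSS.
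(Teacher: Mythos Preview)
Your inductive strategy runs into exactly the obstacle you flagged, and your proposed resolution does not work. A cofiber sequence of towers does give maps of spectral sequences
\[
E_r(\widetilde X') \to E_r(\widetilde X) \to E_r(\widetilde X'') \to E_r(\widetilde X')[1]
\]
with consecutive composites null, but for $r \geq 2$ this sequence is \emph{not} exact in general. The passage from $E_r$ to $E_{r+1}$ is taking homology with respect to $d_r$, and taking homology does not preserve long exact sequences; equivalently, the ``derived exact couple'' operation $E \mapsto H(E, d)$ is not exact. So the sentence ``cofiber sequences of derived exact couples, and hence LES's of spectral sequence pages'' is where the argument breaks. Even the strong collapse you have for $\widetilde X'$ does not repair this: the connecting map on $E_2$ need not commute with $d_2$, so you cannot iterate.

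The argument in the cited references avoids this entirely by using the other characterization in \Cref{def:nilpexp}: $\exp_A(X) \leq n$ means the map $I^{\otimes n} \otimes X \to X$ is null, hence after tensoring with $I^{\otimes s}$ every $n$-fold composite
\[
I^{\otimes(s+n)} \otimes X \longrightarrow I^{\otimes s} \otimes X
\]
in the Adams tower is null. In the exact couple $(D_1, E_1)$ with $D_1^s = \pi_*\hom_{\mathcal{C}}(Y, I^{\otimes s}\otimes X)$, this says $i^n = 0$. Plugging into the standard formulas
\[
Z_{n+1}^s = k^{-1}\bigl(\mathrm{im}(i^n\colon D_1^{s+n+1}\to D_1^{s+1})\bigr) = \ker k,
\qquad
B_{n+1}^s = j\bigl(\ker(i^n\colon D_1^s \to D_1^{s-n})\bigr) = j(D_1^s) \quad (s \geq n),
\]
one reads off $E_{n+1}^{s,*} = \ker k / \mathrm{im}\, j = 0$ for $s \geq n$ by exactness of the original couple. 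No induction, and no comparison of three spectral sequences, is needed.
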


\subsection{The descent theorem}

We now specialize to the case where \emph{every} object is $A$-nilpotent.

\begin{definition}[Balmer \cite{Balmersep}, Mathew \cite{MGal} ] 
We say that the algebra object $A$ is \textbf{descendable} or of
\textbf{universal descent} if 
$\mathbf{1} \in \nil_A = \tht(A)$ (equivalently, if $\mathcal{C} = \tht(A)$).
\end{definition}

When $A$ is descendable, many properties of objects or morphisms in
$\mathcal{C}$ can be checked after tensoring up to $A$, sometimes up to
nilpotence.

\begin{proposition} 
\label{descisconservative}
Suppose $A$ is descendable. Then if $X \otimes A = 0$, then $X = 0$. 
\end{proposition}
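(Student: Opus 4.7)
The plan is to use descendability to move the nilpotence of $\mathbf{1}$ onto $X$ via tensoring. The key observation is that if $X \otimes A = 0$, then in fact $X \otimes Y = 0$ for every $Y \in \nil_A = \tht(A)$, and by descendability $\mathbf{1}$ itself lies in $\nil_A$, giving $X = X \otimes \mathbf{1} = 0$.

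More concretely, I would introduce the auxiliary full subcategory
\[ \mathcal{D} = \{ Y \in \mathcal{C} : X \otimes Y \simeq 0 \} \subset \mathcal{C}. \]
First I would check that $\mathcal{D}$ is a thick subcategory: it contains $0$, is closed under retracts since the tensor product is additive and idempotent-complete, and the two-out-of-three property follows from the fact that $X \otimes (-)$ is an exact functor, so it carries cofiber sequences to cofiber sequences, and vanishing of two terms forces vanishing of the third. Next I would observe that $\mathcal{D}$ is in fact a thick $\otimes$-ideal: if $Y \in \mathcal{D}$ and $Z \in \mathcal{C}$, then $X \otimes (Y \otimes Z) \simeq (X \otimes Y) \otimes Z \simeq 0 \otimes Z \simeq 0$.

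By hypothesis $A \in \mathcal{D}$, so the thick $\otimes$-ideal generated by $A$ is contained in $\mathcal{D}$, i.e.\ $\nil_A = \tht(A) \subset \mathcal{D}$. Descendability now says precisely that $\mathbf{1} \in \tht(A)$, hence $\mathbf{1} \in \mathcal{D}$, hence $X \simeq X \otimes \mathbf{1} \simeq 0$.

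There is no real obstacle here; the only mild subtlety is verifying that $\mathcal{D}$ is closed under the $\otimes$-ideal property rather than only under the thick subcategory operations. Once that is in place, the argument is essentially a one-line unwinding of the definition of descendability combined with the equivalent description $\nil_A = \th(\{A \otimes Y\}_{Y \in \mathcal{C}})$ recorded just after the definition of $A$-nilpotence.
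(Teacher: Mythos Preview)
Your proof is correct and follows exactly the same strategy as the paper: define the thick $\otimes$-ideal $\mathcal{D} = \{Y : X \otimes Y \simeq 0\}$, note that $A \in \mathcal{D}$ forces $\mathbf{1} \in \tht(A) \subset \mathcal{D}$, and conclude $X \simeq X \otimes \mathbf{1} \simeq 0$. The paper states this in one sentence without verifying the thick $\otimes$-ideal axioms explicitly, but the content is identical.
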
 
\begin{proof} 
The class of all $Y$ such that $X \otimes Y = 0$ is clearly a thick
$\otimes$-ideal; if it contains $A$, it must therefore contain $\mathbf{1}$, so
that $X = 0$. 
\end{proof} 

As another example, we have the following proposition, which can be proved using a
diagram-chase. 
We leave the details to the reader (or compare the discussion in \cite[Sec.
4]{MNN15i} and \cite[Sec. 3]{MGal}).
\begin{proposition} 
Say that a map $f: X \to Y$ in $\mathcal{C}$ is \textbf{$A$-zero} if
$1_A \otimes f: A \otimes X \to A \otimes Y$ is nullhomotopic. 
The following are equivalent: 
\begin{enumerate}
\item $A$ is descendable.  
\item If $I = \mathrm{fib}( \mathbf{1} \to A)$, then the map $I \to \mathbf{1}$ is
$\otimes$-nilpotent, i.e., there exists $N$ such that $I^{\otimes N} \to
\mathbf{1}$ is null. 
\item There exists $N \geq 1$ such that if $f_1, \dots, f_N$ are composable $A$-zero
maps, then $f_N \circ \dots \circ f_1 = 0$. 
\end{enumerate}
\end{proposition}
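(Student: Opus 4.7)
The plan is to show $(1) \Leftrightarrow (2)$ directly from \Cref{def:nilpexp}, and $(2) \Leftrightarrow (3)$ via a factoring lemma for $A$-zero maps.

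For $(1) \Leftrightarrow (2)$: Applying the equivalence in \Cref{def:nilpexp} to $X = \mathbf{1}$, descendability of $A$ (meaning $\mathbf{1} \in \tht(A)$) is equivalent to $I^{\otimes n} \otimes \mathbf{1} \to \mathbf{1}$ being null for some $n$, which is exactly (2).

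For $(2) \Leftrightarrow (3)$, the key observation is a factoring lemma: a map $f\colon X \to Y$ is $A$-zero if and only if $f$ factors through the map $i \otimes 1_Y\colon I \otimes Y \to Y$ induced by the structure map $i\colon I \to \mathbf{1}$. The ``if'' direction reduces to the fact that $A \otimes I \to A$ is null, which follows from the cofiber sequence $A \otimes I \to A \to A \otimes A$ together with the observation that the unit $A \to A \otimes A$ is split by the multiplication $\mu\colon A \otimes A \to A$ (so the long exact sequence $[W, A \otimes I] \to [W, A] \to [W, A \otimes A]$ forces the first map to vanish for every $W$). Conversely, naturality of the unit gives $\eta_Y \circ f = (1_A \otimes f) \circ \eta_X$ where $\eta_Y\colon Y \to A \otimes Y$, so if $1_A \otimes f = 0$ then $\eta_Y \circ f = 0$, and $f$ lifts through the fiber $I \otimes Y$ by the exact sequence associated to $I \otimes Y \to Y \to A \otimes Y$.

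Given the factoring lemma, $(2) \Rightarrow (3)$ proceeds by induction on the number of composable $A$-zero maps. Given $f_j\colon X_{j-1} \to X_j$ with $1_A \otimes f_j = 0$, one inductively produces a factorization of $f_N \circ \cdots \circ f_1$ through $i^{\otimes N} \otimes 1_{X_N}\colon I^{\otimes N} \otimes X_N \to X_N$; the inductive step uses that tensoring $f_{j+1}$ on the left with $1_{I^{\otimes j}}$ preserves the $A$-zero property (here one invokes the symmetric monoidal structure to move the $A$ past the $I^{\otimes j}$). If (2) holds with exponent $N$, then $I^{\otimes N} \otimes X_N \to X_N$ is null, so the whole composite vanishes. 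Conversely, $(3) \Rightarrow (2)$ is immediate by applying (3) to the chain $I^{\otimes N} \xrightarrow{i \otimes 1} I^{\otimes(N-1)} \xrightarrow{i \otimes 1} \cdots \xrightarrow{i} \mathbf{1}$, whose successive maps are $A$-zero since $A \otimes I \to A$ is null. The main technical point is the inductive lifting in $(2) \Rightarrow (3)$, which reduces cleanly to the key factoring lemma.
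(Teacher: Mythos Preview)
Your proof is correct. The paper itself omits the proof entirely, referring the reader to \cite[Sec.~4]{MNN15i} and \cite[Sec.~3]{MGal}; your argument is precisely the standard one found in those references. The equivalence $(1)\Leftrightarrow(2)$ is exactly the content of \Cref{def:nilpexp} applied to $X=\mathbf{1}$, and your factoring lemma for $A$-zero maps together with the inductive lifting through $I^{\otimes j}\otimes X_j$ is the expected route for $(2)\Leftrightarrow(3)$.
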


\begin{proposition} 
\label{nilpdescnec}
Let $A \in \alg(\mathcal{C})$ be descendable. Let $R$ be an algebra object in
$\mathcal{C}$. Suppose $x \in \pi_*R $ maps to zero in $\pi_*(A \otimes
R)$. Then $x$ is nilpotent. 
\end{proposition}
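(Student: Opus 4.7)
The plan is to invoke the equivalent characterization of descendability from the previous proposition: since $A$ is descendable, there exists $N \geq 1$ such that any composable sequence of $N$ $A$-zero morphisms in $\mathcal{C}$ composes to zero. The strategy is then to realize $x^N$ as the image of the unit of $R$ under such a composition of $A$-zero self-maps of $R$.

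Fixing a representative $x: \mathbf{1} \to R$ (suppressing the suspension corresponding to the degree of $x$, which will be treated as mild bookkeeping), consider the self-map $\mu_x: R \to R$ obtained as the composite
\[ R \simeq \mathbf{1} \otimes R \xrightarrow{x \otimes 1_R} R \otimes R \xrightarrow{\mu_R} R, \]
where $\mu_R$ is the multiplication on $R$. Then $x^n \in \pi_* R$ is represented by the composite $\mathbf{1} \xrightarrow{u_R} R \xrightarrow{\mu_x^n} R$, where $u_R$ is the unit of $R$ and $\mu_x^n$ denotes the $n$-fold self-composition of $\mu_x$.

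The key observation is that $\mu_x$ is $A$-zero. Indeed, after tensoring with $A$, the morphism $1_A \otimes \mu_x$ factors through $1_A \otimes x \otimes 1_R: A \otimes R \to A \otimes R \otimes R$, which is nullhomotopic by the hypothesis that $1_A \otimes x: A \to A \otimes R$ vanishes. Consequently, in the $N$-fold composition $\mu_x^N = \mu_x \circ \cdots \circ \mu_x$, every factor is $A$-zero, so the descendability criterion yields $\mu_x^N \simeq 0$ as a morphism $R \to R$. Precomposing with the unit $u_R$ gives $x^N = 0$ in $\pi_* R$, proving that $x$ is nilpotent. No serious obstacle is expected: the argument is essentially an unfolding of definitions combined with the descendability characterization, and the suspension shifts when $x$ has nonzero degree are routine.
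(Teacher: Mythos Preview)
Your argument is correct and matches the paper's primary proof, which simply invokes the preceding proposition's criterion that any $N$-fold composite of $A$-zero maps vanishes; the paper also offers an alternative via the horizontal vanishing line in the $A$-based Adams spectral sequence. One small clarification: the hypothesis literally says the class $\bar{x}\colon \mathbf{1}\to A\otimes R$ is null, not that $1_A\otimes x\colon A\to A\otimes R$ is null, but the latter follows since $1_A\otimes x$ is the $A$-linear extension of $\bar{x}$ along the free--forgetful adjunction (and a map null in $A$-modules is null in $\mathcal{C}$).
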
 
\begin{proof} 
This is a special case of the previous result. 
Alternatively, we can use the $A$-based Adams spectral sequence to compute
$\pi_* R$. Since the spectral sequence degenerates with a horizontal vanishing
line at a finite stage, we see easily that any permanent cycle in positive
filtration must be nilpotent. 
\end{proof}

\begin{example} 
Suppose that $\mathbf{1}$ is a retract of $A$. 
Then clearly $A$ is descendable, and we can 
take $N = 1$ in the above. This case was considered by Balmer
\cite{Balmerdesc}, who shows that  a type of descent at the level of homotopy
(triangulated) categories holds. When $A$ is descendable but $\mathbf{1}$ is
not a retract, one still has a descent statement, but it requires the use of
$\infty$-categories which we explain below. 
\end{example}

One of the key features of the notion of descendability is that it enables a formulation of a
derived analog of faithfully flat descent, as in \cite{MGal}.  
This essentially uses the theory of $\infty$-categories. 
We refer also
to \cite[Appendix D.3]{lurie_sag} for another account of these results.  

If $A \in \mathrm{Alg}(\mathcal{C})$, then we can form a stable $\infty$-category 
of $A$-modules $\md_{\mathcal{C}}(A)$. 
If $\mathcal{C}$ is presentable and $A$ is a \emph{commutative} algebra in
$\mathcal{C}$, then $\md_{\mathcal{C}}(A)$ acquires a symmetric monoidal
structure from the $A$-linear tensor product. 
We refer to \cite{Lur16} for an $\infty$-categorical treatment of these
ideas, and the original source \cite{EKMM} that developed the theory in
spectra. 

The basic result is that one can recover $\mathcal{C}$ as the
$\infty$-category of ``$A$-modules with descent data,'' if $A$ is descendable.
This is an $\infty$-categorical version of the classical situation in algebra,
which we review briefly.

\begin{example} 
\label{classicalflatdesc}
We remind the reader of the classical setting in ordinary algebra
as developed in
\cite[Exp. VIII]{SGA1}. A modern exposition is in \cite{Vistoli}.  
Let $R \to R'$ be a map of commutative rings. 
Given an $R$-module $M$, we can form the base-change $M_{R'} = R' \otimes_R M$, which
is an $R'$-module. 
The $R'$-module $M_{R'}$ comes with the following extra piece of data: there is
an isomorphism of $R' \otimes_R R'$-modules
\[ \phi_M:  R' \otimes_R M_{R'} \simeq M_{R'} \otimes_R R'  \]
(coming from the fact that both are base-changed from $M$ along $R \to R'
\otimes_R R'$). 
In addition, $\phi_M$ satisfies a cocycle condition. 
Namely, the map $\phi_M$ yields \emph{two} natural isomorphisms of $R' \otimes_R R'
\otimes_R R'$-modules
\[ R' \otimes_R R' \otimes_R M \simeq M \otimes_R R' \otimes_R R '  \]
given by $(\phi_{M, 12} \otimes 1_{R'}) \circ (1_{R'} \otimes \phi_{M, 23})$
and $\phi_{M, 13}$. 
In this case, we have an equality of maps 
$\phi_{M, 13} = 
(\phi_{M, 12} \otimes 1_{R'}) \circ (1_{R'} \otimes \phi_{M, 23})$; this is
called the \emph{cocycle condition}. 

In general, an \emph{$R'$-module with descent datum} consists of an $R'$-module $N'$ and an
isomorphism $\phi: R' \otimes_R N \simeq N \otimes_R R'$ of $R' \otimes_R
R'$-modules satisfying the cocycle condition. 
Any $R$-module yields in the above fashion an $R'$-module with descent datum.
Grothendieck's faithfully flat descent theorem states that, if $R \to R'$ is
faithfully flat, this functor
implements an equivalence of categories between $R$-modules and $R'$-modules
with descent datum.
\end{example} 

\begin{example} 
Consider the special case $\mathbb{R} \to \mathbb{C}$. 
In this case, given an $\mathbb{R}$-vector space $V$, 
we can form the complexification $V_{\mathbb{C}} = \mathbb{C}
\otimes_{\mathbb{R}} V$. The $\mathbb{C}$-vector space $V_{\mathbb{C}}$ is
equipped with a $\mathbb{C}$-antilinear involution $\iota: V_{\mathbb{C}} \to
V_{\mathbb{C}}$ given by complex conjugation on the $\mathbb{C}$ factor. 
In this case, faithfully flat descent states that the category of
$\mathbb{R}$-vector
spaces
is equivalent to the category of $\mathbb{C}$-vector spaces equipped with an
antilinear involution. 
Note that we can recover $V$ as the fixed points of the $C_2$-action on
$V_{\mathbb{C}}$.
More generally, given a $G$-Galois extension of fields $K \subset L$, a
descent datum on an $L$-vector space $W$ consists of a $G$-action on $W$ which
is $L$-\emph{semilinear}, i.e., for a scalar $l \in L$ and $w \in W$, we have
$g( lw) = g(l) g(w)$ for $g \in G$ and $g(l) \in L$ comes from the natural action of
$g$ on $L$. 
\end{example}

In the $\infty$-categorical setting, 
it is generally unwieldy to spell out explicitly the higher analogs of the coherence
condition (which are replaced by with higher homotopies). 
However, it is possible  to describe an $\infty$-categorical analog of the
category of descent data as an appropriate homotopy limit, or as an $\infty$-category
of coalgebras. 
We refer also to the work of Hess \cite{Hess} for a detailed treatment in a
related setting.

Let $\mathcal{C}$ 
be presentably symmetric monoidal stable $\infty$-category. 
As mentioned earlier, for every $A \in \clg(\mathcal{C})$, we have associated a
presentably symmetric monoidal $\infty$-category $\md_{\mathcal{C}}(A)$. 
This is \emph{functorial} in $A$. Let $\cati$ be the $\infty$-category of
$\infty$-categories. Then we have a functor 
$\clg(\mathcal{C}) \to \cati$ sending $A \mapsto \md_{\mathcal{C}}(A)$ and a
map $A \to A'$ to the relative tensor product functor $A' \otimes_A \cdot :
\md_{\mathcal{C}}(A) \to \md_{\mathcal{C}}(A')$ .

If $A \in \clg(\mathcal{C})$, then we can form the cobar construction 
$\cb(A)$ as a diagram in $\fun(\Delta, \clg(\mathcal{C}))$, i.e., as a
cosimplicial diagram in $\mathcal{C}$. Similarly, the augmented cobar
construction $\cbaug(A)$ is a diagram in $\clg(\mathcal{C})$.

\newcommand{\desc}{\mathrm{Desc}}

\begin{definition} 
Let $\mathcal{C}$ 
be a presentably symmetric monoidal stable $\infty$-category and let $A \in
\clg(\mathcal{C})$. The \textbf{$\infty$-category of descent data}
$\desc_A(\mathcal{C})$ is given by
the totalization 
in $\cati$ 
\begin{equation} \desc_A(\mathcal{C}) \stackrel{\mathrm{def}}{=}
\mathrm{Tot}( \md_{\mathcal{C}}(\cb(A))) = 
\mathrm{Tot}\left( \md_{\mathcal{C}}(A) \rightrightarrows
\md_{\mathcal{C}}(A \otimes A) \triplearrows \dots \right)  \end{equation}
Note that since $\cb(A)$ receives an augmentation from $\mathbf{1}$, we obtain
a natural symmetric monoidal, cocontinuous functor 
\begin{equation} \label{compdesc}\mathcal{C} \to \desc_A(\mathcal{C}).\end{equation}
\end{definition} 

\begin{remark} 
Informally, an object of $\desc_A(\mathcal{C})$ consists of the following: 
\begin{enumerate}
\item For each $[n] \in \Delta$, an $A^{\otimes (n+1)}$-module $M_{[n]}$.  
\item 
For each map $[n] \to [m]$ in $\Delta$, an equivalence
$ M_{[m]} \simeq A^{\otimes (m+1)}  \otimes_{A^{\otimes (n+1)}} M_{[n]}$. 
\item Cocycle conditions for these equivalences, and higher homotopies.
\end{enumerate}
\end{remark} 

This yields a generalization of 
the construction of \Cref{classicalflatdesc}. 
Note that the approach to descent data considered in \cite{Hess}
is based on comonadicity, which applies in more general situations than this. 
In this setting, a \emph{descent theorem} will state that a comparison map of
the form \eqref{compdesc} is an equivalence.

There is a direct analog of faithfully flat descent when we work in the case
$\mathcal{C} = \sp$ or $\md_{}(A)$. 
\begin{definition} 
A map of $\e{\infty}$-rings $A \to B$ is said to be \emph{faithfully flat} if: 
\begin{enumerate}
\item $\pi_0 A \to \pi_0 B$ is faithfully flat.  
\item The natural map $\pi_0 B \otimes_{\pi_0 A} \pi_* A \to \pi_* B $ is an
isomorphism. 
\end{enumerate}
\end{definition} 

In this case, one can prove an analog of faithfully flat descent for modules.
\begin{theorem}[{Lurie \cite[Theorem D.6.3.1]{lurie_sag}, \cite[Theorem
6.1]{DAGVII}} ] 
Suppose $A \to B$ is a faithfully flat map of $\e{\infty}$-rings.
Then the natural map implements an equivalence of symmetric monoidal
$\infty$-categories
\( \md_{}(A) \simeq \desc_B( \md_{}(A)).   \)
\end{theorem}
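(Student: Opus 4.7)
The plan is to decompose the statement into full faithfulness and essential surjectivity of the comparison functor $F: \md(A) \to \desc_B(\md(A))$, $M \mapsto \{B^{\otimes_A (n+1)} \otimes_A M\}_{[n]}$. Unwinding the totalization defining $\desc_B$ and using the extension-of-scalars adjunction at each cosimplicial level, full faithfulness is equivalent to the statement that for every $A$-module $N$, the coaugmented cobar complex
\[ N \longrightarrow \mathrm{Tot}\bigl(\cb(B) \otimes_A N\bigr) \]
is an equivalence in $\md(A)$.

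For full faithfulness, I would analyze the associated Bousfield--Kan spectral sequence. Flatness of $A \to B$ gives $\pi_*(B^{\otimes_A (n+1)} \otimes_A N) \cong (\pi_0 B)^{\otimes_{\pi_0 A} (n+1)} \otimes_{\pi_0 A} \pi_* N$, so the $E_2$-page is the \v{C}ech cohomology of the fpqc cover $\spec \pi_0 B \to \spec \pi_0 A$ with coefficients in the $\pi_0 A$-module $\pi_t N$. Classical Grothendieck faithfully flat descent then forces $E_2^{s,t} = 0$ for $s > 0$ and $E_2^{0,t} = \pi_t N$. The resulting horizontal vanishing line at $E_2$ yields strong convergence and the desired equivalence.

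For essential surjectivity, given a descent datum $(M_{[\bullet]}) \in \desc_B(\md(A))$, I would set $\widetilde{M} = \mathrm{Tot}(M_{[\bullet]}) \in \md(A)$ and verify $F(\widetilde{M}) \simeq (M_{[\bullet]})$, which reduces to the claim $B \otimes_A \widetilde{M} \simeq M_{[0]}$. The key observation is that after applying $B \otimes_A (-)$, the augmented cosimplicial object $M_{[0]} \to B \otimes_A M_{[\bullet]}$ acquires an extra degeneracy coming from the multiplication $B \otimes_A B \to B$ combined with the descent isomorphisms $M_{[n]} \simeq B^{\otimes_A (n+1)} \otimes_{B^{\otimes_A n}} M_{[n-1]}$. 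By the standard extra-degeneracy principle (cf.\ the proof of \Cref{Adamstowerconv}), this augmented cosimplicial object is split, so its totalization is preserved by any exact functor and equals $M_{[0]}$.

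The main obstacle I anticipate is tracking $\infty$-categorical coherence in the essential-surjectivity step, where the higher descent data must interact compatibly with the extra degeneracy. This is also where the hypothesis that $A \to B$ is $\e{\infty}$, not merely $\e{1}$, is essential: we need a symmetric monoidal structure on $\md(B \otimes_A B)$ to even form the descent totalization. A cleaner alternative would be to apply the Barr--Beck--Lurie comonadicity theorem to the adjunction $B \otimes_A (-) \dashv (\text{forget})$, with conservativity following from faithful flatness on $\pi_*$ and preservation of limits of split cosimplicial diagrams being automatic; one then identifies the $\infty$-category of comodules for the resulting comonad with $\desc_B(\md(A))$ directly.
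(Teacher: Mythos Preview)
The paper does not give its own proof of this theorem; it is stated with a citation to Lurie and then used as motivation for the descendable analog (\Cref{nilpdescthm}). So there is no paper proof to compare against directly. That said, your Barr--Beck alternative is exactly the route Lurie takes, and it is also the route the paper sketches for \Cref{nilpdescthm}: verify conservativity of $B \otimes_A (-)$ and preservation of $B$-split totalizations, then identify comodules with the descent totalization.

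Your direct argument is essentially correct but has one soft spot in the essential-surjectivity step. You show that $\mathrm{Tot}(B \otimes_A M_{[\bullet]}) \simeq M_{[0]}$ via the extra degeneracy, but what you actually need is $B \otimes_A \mathrm{Tot}(M_{[\bullet]}) \simeq M_{[0]}$, and these differ a priori because $B \otimes_A (-)$ need not commute with an infinite limit. The fix is to run the same BKSS analysis you used for full faithfulness on the cosimplicial $A$-module $M_{[\bullet]}$ itself: flatness gives $E_2^{s,t}(M_{[\bullet]}) \otimes_{\pi_0 A} \pi_0 B \cong E_2^{s,t}(B \otimes_A M_{[\bullet]})$, the right-hand side vanishes for $s>0$ by the splitting, and faithful flatness then forces $E_2^{s,t}(M_{[\bullet]}) = 0$ for $s>0$. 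This horizontal vanishing line at $E_2$ is precisely what lets you identify $\pi_*\bigl(B \otimes_A \mathrm{Tot}(M_{[\bullet]})\bigr)$ with $\pi_*\mathrm{Tot}(B \otimes_A M_{[\bullet]})$ via the natural comparison map. Once you patch this, your direct argument and the Barr--Beck argument are really the same proof unwound: the spectral-sequence step is exactly the verification of hypothesis (2) in the comonadicity theorem.
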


A key result in the theory of ``descent up to nilpotence''
is that the analogous conclusion holds when one assumes descendability. 
Unlike faithful flatness, descendability is a purely categorical condition,
which makes sense in any stably symmetric monoidal $\infty$-category (not only
$\sp$). 

\begin{theorem}[{\cite[Prop. 3.22]{MGal}}] 
\label{nilpdescthm}
Suppose $A \in \clg(\mathcal{C})$ is descendable.
Then the natural functor implements an equivalence
of symmetric monoidal $\infty$-categories
\[ \mathcal{C} \simeq \desc_A(\mathcal{C}). \]
\end{theorem}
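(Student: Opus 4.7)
The plan is to exhibit the symmetric monoidal functor $\Phi\colon \mathcal{C} \to \desc_A(\mathcal{C})$ as an adjoint equivalence. Since $\Phi$ is already symmetric monoidal, it suffices to show it is an equivalence of underlying $\infty$-categories. First I would produce a right adjoint $\Psi$ via the adjoint functor theorem: $\Phi$ is colimit-preserving between presentable stable $\infty$-categories, and unwinding the totalization defining $\desc_A(\mathcal{C})$ in $\cati$ shows that $\Psi$ sends a descent datum $M^{\bullet}$ to the totalization of its termwise-forgotten cosimplicial object in $\mathcal{C}$; in particular, for $X \in \mathcal{C}$ one has $\Psi\Phi(X) \simeq \mathrm{Tot}(X \otimes \cb(A))$.

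Next I would verify that the unit of the adjunction is an equivalence. Descendability of $A$ means $\mathbf{1} \in \nil_A$, and hence $X = \mathbf{1} \otimes X \in \nil_A$ for every $X \in \mathcal{C}$. Applying \Cref{Adamstowerconv} with $F = \mathrm{id}_{\mathcal{C}}$ then identifies the natural map $X \to \mathrm{Tot}(X \otimes \cb(A))$ with an equivalence. This is precisely the unit, so $\Phi$ is fully faithful.

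The main obstacle is essential surjectivity. I plan to argue via the comonadic form of Barr--Beck--Lurie \cite[\S 4.7]{Lur16} applied to the adjunction $F = A \otimes - \colon \mathcal{C} \rightleftarrows \md_{\mathcal{C}}(A) \colon U$. Conservativity of $F$ is \Cref{descisconservative}. For the preservation of $F$-split cosimplicial totalizations: split cosimplicial diagrams are absolute limit diagrams, and a standard verification in the presentable stable setting shows $F$ preserves such totalizations. The comonadic theorem then identifies $\mathcal{C}$ with the $\infty$-category of comodules in $\md_{\mathcal{C}}(A)$ over the comonad $FU = A \otimes U(-)$.

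The remaining identification of this comodule $\infty$-category with $\desc_A(\mathcal{C}) = \mathrm{Tot}(\md_{\mathcal{C}}(\cb(A)))$ is the final, most bookkeeping-intensive step: one interprets a coaction together with its coherences as exactly the descent datum defining an object of the cosimplicial totalization, and one checks compatibility with $\Phi$. I expect this bar/cobar dictionary --- which relies on the $\infty$-categorical Dold--Kan equivalence mentioned earlier in the excerpt and on symmetric monoidality of the tensor product --- to be the principal technical hurdle, the earlier steps being relatively formal consequences of descendability and quick convergence.
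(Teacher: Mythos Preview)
Your overall strategy is the same as the paper's: identify $\desc_A(\mathcal{C})$ with the $\infty$-category of coalgebras for the comonad $FU$ on $\md_{\mathcal{C}}(A)$ (this is the ``bookkeeping'' you mention, handled in \cite[Lemma D.3.5.7]{lurie_sag}), and then verify the hypotheses of the comonadic Barr--Beck theorem for $F = A \otimes -$. Conservativity is indeed \Cref{descisconservative}, and your separate check of the unit via \Cref{Adamstowerconv} is correct though redundant once Barr--Beck is in place.

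The gap is in your treatment of the second Barr--Beck hypothesis. An $F$-split cosimplicial object $X^\bullet$ in $\mathcal{C}$ is one for which $A \otimes X^\bullet$ is split; it is \emph{not} $X^\bullet$ itself that is split. So the fact that split diagrams are absolute limits tells you only that $\mathrm{Tot}(A \otimes X^\bullet)$ is absolute --- it says nothing about whether the left adjoint $F$ preserves $\mathrm{Tot}(X^\bullet)$. This is not a ``standard verification'': $F$ is a left adjoint and has no reason to commute with an infinite limit. The paper uses descendability a second time here. One argues that the class of $Z \in \mathcal{C}$ for which $Z \otimes X^\bullet$ is quickly converging is a thick $\otimes$-ideal (using that quickly converging towers are closed under cofiber sequences and retracts, and that exact functors preserve them); since it contains $A$ by hypothesis and $A$ is descendable, it contains $\mathbf{1}$. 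Hence $X^\bullet$ itself is quickly converging, and by \Cref{quickconvpreserved} any exact functor --- in particular $F$ --- preserves its totalization. Without this argument the comonadicity check is incomplete.
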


\begin{proof}[Proof sketch] 

The descent theorem is an application of the $\infty$-categorical version of
the Barr-Beck monadicity theorem proved in \cite{Lur16} (see also \cite{RV}
for another approach to the monadicity theorem). 
As in \cite[Lemma D.3.5.7]{lurie_sag}, one can identify the $\infty$-category
$\desc_A(\mathcal{C})$ with the $\infty$-category of \emph{coalgebras} in
$\md_{A}(\mathcal{C})$ over the comonad arising from the adjunction
$\mathcal{C} \rightleftarrows \md_{\mathcal{C}}(A)$ given by extension and
restriction of scalars. 
The key point is then to show that the adjunction $\mathcal{C} \rightleftarrows
\md_A(\mathcal{C})$ is comonadic. 
By the monadicity theorem, we need to show two statements: 
\begin{enumerate}
\item  The functor $A \otimes \cdot: \mathcal{C} \to \md_{\mathcal{C}}(A)$ is
conservative.
\item The functor $A \otimes \cdot: \mathcal{C} \to \md_{\mathcal{C}}(A)$
preserves totalizations in $\mathcal{C}$ which admit a splitting
after tensoring with $A$. \end{enumerate}
Statement (1) follows from \Cref{descisconservative}. Statement (2) follows in
a similar (but more elaborate) fashion: 
given any $X^\bullet \in \fun(\Delta, \mathcal{C})$ such that $A \otimes
X^\bullet$ is quickly converging (e.g., admits a splitting), we argue that $X^\bullet$ is quickly converging. This means that any exact functor, e.g., tensoring with $A$, preserves the inverse limit of
$X^\bullet$ by \Cref{quickconvpreserved}. 
\end{proof} 

\section{Examples of nilpotence}

\subsection{First examples of descendability}
We start by giving a number of elementary examples, mostly from \cite{MGal}. 

Let $A \in \clg(\mathcal{C})$. 
Then an $A$-module is nilpotent if it belongs to the thick subcategory of
$\mathcal{C}$ generated by those objects which admit the structure of
$A$-module; in particular, $A$ is descendable if the unit has this property.
Sometimes, we can check this directly.

\begin{example} 
\label{quotbynilpideal}
Let $R$ be a commutative ring and let $I \subset R$ be a nilpotent ideal. 
Then the map $R \to R/I$ is clearly descendable. 
In fact, the  (finite) $I$-adic filtration on $R$  gives a filtration of $R$
with subquotients $R/I$-modules. 
\end{example} 

\begin{example} 
Let $R$ be a connective $\e{\infty}$-ring which is $n$-truncated, i.e., such
that $\pi_i R = 0$ for $i > n$. Then the map $R \to \pi_0 R = \tau_{\leq 0} R$
is descendable for a similar reason: we have the Postnikov filtration of $R$
whose subquotients admit the structure of $\pi_0 R$-modules (internal to
$\md(R)$). 
\end{example}

\begin{example} 
Suppose $\mathbf{1} \in \clg(\mathcal{C})$ is a \emph{finite} inverse limit of
$A_\alpha \in \clg(\mathcal{C}), \alpha \in I$. Then $\prod_{\alpha} A_\alpha$ is
descendable. 
For example, if an $\e{\infty}$-ring $R$ is a finite inverse limit of
$\e{\infty}$-rings $R_\alpha$, then the map $R \to \prod R_\alpha$ is
descendable. 
\end{example} 

\begin{example} 
Suppose $R$ is an $\e{\infty}$-ring and $X$ is a finite, pointed, connected CW complex. Then the map 
$R^X = C^*(X; R) \to R$ given by evaluation at the basepoint is descendable. 
\end{example}

\begin{example} 
Suppose $R$ is an $\e{\infty}$-ring with $\pi_0 R = k$ a field and suppose
given a map $R \to k$ inducing the identity on $\pi_0$. 
Suppose that $\pi_i R = 0$ for $i > 0$ and for $i \ll 0$. 
Then the map $R \to k$ is descendable. 
This follows from the work of \cite{DGI}. 

In fact, we claim that if $M$ is an $R$-module such that $\pi_i M = 0$ for $i
\notin [a,b]$ for some $a \leq b \in \mathbb{Z}$, then $M$ is $k$-nilpotent;
applying this to $R$ itself we can conclude. 
Using \cite[Prop. 3.3]{DGI}, we can induct on $b-a$ and reduce to the case 
where $a = b$; then \cite[Prop. 3.9]{DGI} implies that $M$ is actually a
$k$-module inside of $\md(R)$. 
\end{example}

One necessary (but not sufficient) condition for descendability of $A \in \clg(\mathcal{C})$ is that the kernel of the map 
$\pi_* \mathbf{1} \to \pi_* A$ 
consists of \emph{nilpotent} elements by \Cref{nilpdescnec}. In fact, an
elaboration of that argument shows that the kernel of the
aforementioned map needs to be a nilpotent ideal in $\pi_* \mathbf{1}$. 
Another necessary condition is that 
descent for modules holds. The latter condition is again insufficient in view
of the following example.

\begin{example} 
Let $\mathcal{C} = \widehat{\md(\mathbb{Z})}_p$ denote the $\infty$-category of
$p$-complete objects in $\md(\mathbb{Z}) \simeq D(\mathbb{Z})$; this is
equipped with the $p$-adically completed tensor product. Let $A =
\mathbb{Z}/p\mathbb{Z}$. Then tensoring with $A$ is conservative. Since $A$ is
dualizable, tensoring with $A$ commutes with all limits. By the
$\infty$-categorical Barr-Beck
theorem,
the natural map $\mathcal{C} \to \desc_{\mathcal{C}}(A)$ is an equivalence
(cf. the discussion in \cite[Lemma D.3.5.7]{lurie_sag}). However, $\mathbb{Z}/p\mathbb{Z}$ is clearly
not descendable.  

\end{example}

Nonetheless, under strong conditions descendability is actually equivalent to
this conclusion. 
The following appears as \cite[Th. 3.38]{MGal}; however, the proof given there
uses more abstract category theory than necessary. We give a simplified
presentation here. 

\begin{theorem} 
\label{dualdesc}
Let $B \in \alg(\mathcal{C})$. Suppose $\mathbf{1}$ is compact and $B$ is
dualizable. Suppose that $\cdot \otimes B: \mathcal{C} \to \mathcal{C}$ is conservative. Then $B$ is descendable. 
\end{theorem}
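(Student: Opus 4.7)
The plan is to check directly that $\mathbf{1} \in \tht(B)$ by producing some $N$ with $I^{\otimes N} \to \mathbf{1}$ null, where $I = \mathrm{fib}(\mathbf{1} \to B)$; by \Cref{def:nilpexp} this is exactly what descendability requires. Since $\mathbf{1}$ and $B$ are dualizable, so is $I$ (as the fiber of a map between dualizable objects), and dualizing $I \to \mathbf{1} \to B$ produces a cofiber sequence $B^\vee \to \mathbf{1} \to I^\vee$. Let $e_1 : \mathbf{1} \to I^\vee$ denote the dual of $I \to \mathbf{1}$. Under the duality, the canonical map $\epsilon_N : I^{\otimes N} \to \mathbf{1}$ corresponds to $e_N := e_1^{\otimes N} : \mathbf{1} \to (I^\vee)^{\otimes N}$, so the goal becomes: show that some $e_N$ is nullhomotopic.

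The second step is to package the infinite family $\{e_N\}$ into a single vanishing statement. Form $L := \varinjlim_n (I^\vee)^{\otimes n}$ along the transition maps $\mathrm{id} \otimes e_1 : (I^\vee)^{\otimes n} \to (I^\vee)^{\otimes (n+1)}$; compatibility of the $e_N$ with these transitions yields a well-defined class $e_\infty \in [\mathbf{1}, L]$. Compactness of $\mathbf{1}$ gives $[\mathbf{1}, L] = \varinjlim_n [\mathbf{1}, (I^\vee)^{\otimes n}]$, so $e_\infty = 0$ is equivalent to the nullity of some $e_N$. It therefore suffices to prove $L = 0$.

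For this, I would tensor the cofiber sequence $B^\vee \to \mathbf{1} \to I^\vee$ with $L$ to obtain a cofiber sequence $L \otimes B^\vee \to L \to L \otimes I^\vee$. The second map is an equivalence because $L \otimes I^\vee \simeq L$ via the evident reindexing of the colimit, so $L \otimes B^\vee = 0$. To upgrade this to $L \otimes B = 0$ --- which is what conservativity needs --- I would invoke the triangle identity: $B$ is a retract of $B \otimes B^\vee \otimes B$, so $L \otimes B$ is a retract of $L \otimes B \otimes B^\vee \otimes B$, and the latter vanishes by symmetry of $\otimes$ together with $L \otimes B^\vee = 0$. Conservativity of $\cdot \otimes B$ then forces $L = 0$, which finishes the argument.

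The main conceptual obstacle is the packaging step: recognizing that compactness of $\mathbf{1}$ together with dualizability of $I$ allows one to trade the infinite family of conditions ``$\epsilon_N$ null for some $N$'' for the single vanishing $L = 0$. Once that reformulation is in hand, the fact that $L$ is $I^\vee$-periodic ($L \otimes I^\vee \simeq L$) together with the cofiber sequence $B^\vee \to \mathbf{1} \to I^\vee$ pin $L$ down via its behavior against $B^\vee$, and the triangle identity is the standard bookkeeping needed to let conservativity against $B$ rather than $B^\vee$ do the final work.
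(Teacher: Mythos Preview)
Your strategy is correct and is essentially the same as the paper's: both dualize (using dualizability of $B$) to convert the question into one about a filtered colimit, use the triangle identity to pass between $B$ and $B^\vee$, apply conservativity of $\cdot \otimes B$, and then invoke compactness of $\mathbf{1}$ to extract a finite-stage conclusion. The paper packages this via the dual of the augmented cobar construction $\mathbb{D}\cbaug(B)$, showing $|\mathbb{D}\cb(B)| \simeq \mathbf{1}$ and then that $\mathbf{1}$ is a retract of a finite skeleton; you instead work with the dual Adams tower $\{(I^\vee)^{\otimes n}\}$, which amounts to the same thing.

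There is one genuine gap: the claim that $1_L \otimes e_1 : L \to L \otimes I^\vee$ is an equivalence ``via the evident reindexing of the colimit'' is not justified as stated. Writing $J = I^\vee$ and $\alpha_n = \mathrm{id}_{J^{\otimes n}} \otimes e_1$ for the transitions defining $L$, the diagram computing $L \otimes J$ has terms $J^{\otimes(n+1)}$ with transitions $\alpha_n \otimes 1_J$, which insert $e_1$ in the \emph{second-to-last} slot; the shifted diagram for $L$ inserts $e_1$ in the \emph{last} slot. These differ by a symmetry at each stage, and producing a coherent identification of the colimits compatible with the map $1_L \otimes e_1$ is not automatic. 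The fix is short and in fact simplifies your argument: the map $e_1 \otimes 1_B : B \to I^\vee \otimes B$ is null, because under the adjunction $[I \otimes B, B] \simeq [B, I^\vee \otimes B]$ it corresponds to the map $I \otimes B \to B$, which is null since $\mathbf{1} \otimes B \to B \otimes B$ is split by the multiplication. Hence every transition in the colimit computing $L \otimes B = \varinjlim (J^{\otimes n} \otimes B)$ is $\mathrm{id}_{J^{\otimes n}} \otimes (e_1 \otimes 1_B) = 0$, so $L \otimes B = 0$ directly and conservativity finishes; you can drop the detour through $B^\vee$ and the cofiber-sequence step entirely.
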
 
\begin{proof} 
Note first that tensoring with $\mathbb{D} B$ is also conservative.
In fact, this follows because 
$B$ is a retract of $B \otimes \mathbb{D}B \otimes B$ because $B$ is a module
over $B \otimes \mathbb{D} B$. Note that $B$ and $\mathbb{D}B $ generate the
same thick $\otimes$-ideal in $\mathcal{C}$. 

We consider the augmented cobar construction $\cbaug(B) \in \fun(\Delta^+,
\mathcal{C})$. It takes values in dualizable objects; therefore, we consider
the dual, $\mathbb{D}\cbaug(B) \in \fun(\Delta^{+, op}, \mathcal{C})$. 
Since $\cbaug(B) \otimes B$ is split, it follows that 
$\mathbb{D}\cbaug(B) \otimes \mathbb{D} B$ is a split augmented simplicial
diagram and is therefore a colimit diagram.  
Since tensoring with $\mathbb{D} B$ is conservative, it follows that
$\mathbb{D} \cbaug(B)$ is a colimit diagram. 
It follows that the natural map
\[ \left\lvert \mathbb{D} \cb(B)\right\rvert \to \mathbf{1}  \]
is an equivalence in $\mathcal{C}$. The geometric realization is 
the filtered colimit of its finite skeleta, 
$\left\lvert \mathbb{D} \cb(B)\right\rvert = \varinjlim \left\lvert
\mathrm{sk}_n \mathbb{D} \cb(B)\right\rvert
$. Since $\mathbf{1}$ is compact, it follows that $\mathbf{1} $ is a retract of 
$\mathrm{sk}_n \mathbb{D} \cb(B)$ for some $n$.  Therefore, $\mathbf{1}$ is
$B$-nilpotent and $B$ is descendable. 
\end{proof} 

Even under compact generation, it is far from sufficient 
to assume that tensoring with an object is conservative to guarantee
descendability. We give a simple example below.

\begin{example} 
Let $\mathcal{C} = \md(\mathbb{Z}_{(p)})$. Then $A = \mathbb{Q} \times \mathbb{F}_p$
has the property that tensoring with $A$ is conservative on $\mathcal{C}$, but
$A$ is not descendable. 
For example, the exact functor $\md(\mathbb{Z}_{(p)}) \to
\md(\mathbb{Z}_{(p)})$ given by $X \mapsto
(\widehat{X}_p)_{\mathbb{Q}}$ annihilates any $A$-module but is nonzero. 
\end{example} 

\subsection{Maps of discrete rings}

Given the motivation of faithfully flat descent, it is a natural question now to ask when 
a map of discrete rings is descendable.

\begin{question} 
\label{ffq}
Let $f: A \to B$ be a faithfully flat morphism of commutative rings. 
Is $f$ descendable? 
\end{question} 

In general, we do not know the answer to the above question. We can restate it
as follows. 
Consider the homotopy fiber sequence
\begin{equation} \label{basicfiber} I \xrightarrow{\phi} A \to B  \end{equation}
in the derived $\infty$-category $D(A)$. 
Then $I = M[-1]$ where $M$ is an $A$-module 
which is flat; namely, $M = B/A$. 
It suffices to show that $\phi$ is $\otimes$-nilpotent, i.e., for some $n$, the
map 
\( \phi^{\otimes n}: I^{\otimes n} \to A   \)
is nullhomotopic in $D(A)$.
Now $\phi: I \to A$ is an example of a \emph{phantom} map in $D(A)$. 
We recall the definition below. 

\begin{definition} 
A map $N \to N'$ in $D(A)$ is said to be \textbf{phantom} if for every
perfect $A$-module $F$ with a map $F \to N$, the composite $F \to N \to N'$ is
nullhomotopic. 
\end{definition}

We refer to \cite{ChSt, Neeman97} for a treatment of phantom maps in some
generality. 
Equivalently, a phantom map is a filtered colimit of maps, each of which is
nullhomotopic (but the nullhomotopies need not be compatible with the filtered
colimit). 
In general, in \eqref{basicfiber}, $I$ is a filtered colimit $I = \varinjlim
M_\alpha[-1]$, where each $M_\alpha$ is a  
finitely generated free $A$-module, by Lazard's theorem (see, e.g., \cite[Tag
058G]{Stacks}) and the induced map $M_\alpha[-1] \to M[-1] \to A$ is clearly
nullhomotopic, which means that $\phi$ is phantom as desired. 

In general, phantom maps are very far from being zero, but under countability
conditions the class of phantom maps can be shown to be a square-zero ideal. 
That is the content of the following classical result. 

\begin{theorem}[Christensen-Strickland \cite{ChSt}, Neeman \cite{Neeman97}] 
Let $A$ be a countable ring. Then the composite of any two phantom maps in
$D(A)$ is nullhomotopic. 
\end{theorem}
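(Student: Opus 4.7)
The plan is to exploit the Milnor $\lim^1$ description of phantom maps in the presence of a sequential presentation of the source by perfect complexes. Suppose first that $X \in D(A)$ is presented as a sequential homotopy colimit $X = \mathrm{hocolim}_{n \geq 0} P_n$ with each $P_n$ perfect. Then there is a standard cofiber sequence
\[
\bigoplus_n P_n \xrightarrow{1 - \sigma} \bigoplus_n P_n \to X \xrightarrow{\partial} \bigoplus_n \Sigma P_n,
\]
where $\sigma$ is the shift. Unwinding the definition, a map $f \colon X \to Y$ is phantom exactly when each restriction $P_n \to X \to Y$ is null, which happens if and only if $f$ factors as $f = \tilde{f} \circ \partial$ for some $\tilde{f} \colon \bigoplus_n \Sigma P_n \to Y$. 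This is the structural classification of phantom maps that the argument will exploit.

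Given such a factorization and a second phantom map $g \colon Y \to Z$, the composition is $g \circ f = (g \circ \tilde{f}) \circ \partial$. Writing $\tilde{f}$ in terms of its components $\tilde{f}_n \colon \Sigma P_n \to Y$, each composite $g \circ \tilde{f}_n \colon \Sigma P_n \to Z$ is null because $\Sigma P_n$ is perfect and $g$ is phantom. Using the universal property of coproducts in a triangulated category, $[\bigoplus_n \Sigma P_n, Z] \cong \prod_n [\Sigma P_n, Z]$, so componentwise vanishing forces $g \circ \tilde{f} = 0$, and therefore $g \circ f = 0$. At this level the argument is essentially formal and does not yet invoke the hypothesis on $A$.

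The main obstacle, and the only place the countability of $A$ enters, is the reduction to the sequential case. The key input is that $\mathrm{Perf}(A)$ is essentially countable when $A$ is: there are only countably many isomorphism classes of perfect complexes (bounded complexes of finitely generated projective $A$-modules), and the morphism sets between them are countable. One must leverage this to show that any phantom map $f \colon X \to Y$ can be analyzed via a sequential presentation of its source by perfects. When $X$ has countable homotopy groups, such a presentation is built directly by inductively resolving generators and relations in $\pi_\ast X$. For arbitrary $X$, the essential countability of $\mathrm{Perf}(A)$ allows one to extract a countable cofinal subdiagram of the filtered category of perfects mapping into $X$, which can then be refined to a sequential tower, so that the Milnor factorization above applies. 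This cofinality-plus-refinement step is the technical heart of the proof; with it in hand, the factorization argument concludes.
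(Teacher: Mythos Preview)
Your factorization argument in the sequential case is correct and is exactly the mechanism one wants: once $X$ sits in a triangle whose other two terms are direct sums of perfects, any phantom out of $X$ factors through the connecting map, and a second phantom annihilates that factorization componentwise.

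The gap is the reduction step. Your claim that essential countability of $\mathrm{Perf}(A)$ yields a countable cofinal subdiagram of $\mathrm{Perf}(A)/X$ is false in general. Take $X = \bigoplus_I A$ with $I$ uncountable: any map from a perfect complex into $X$ factors through a finite sub-sum, so a countable family of such maps touches only countably many indices of $I$, and the inclusion of a summand outside that set cannot factor through any member of the family. More broadly, any $X$ with an uncountable homotopy group cannot be a sequential colimit of perfects over a countable ring, since such a colimit has countable homotopy. Countability of $\mathrm{Perf}(A)$ constrains the objects of the slice $\mathrm{Perf}(A)/X$ only up to the choice of structure map, and there may be uncountably many maps from a fixed perfect into $X$.

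The argument in Christensen--Strickland and Neeman, which the paper points to, does not produce a sequential presentation of $X$ by perfects. It instead uses countability of $\mathrm{Perf}(A)$ to establish Brown representability for homological functors on the compacts; from this one extracts, for \emph{every} $X$, a triangle $Q_1 \to Q_0 \to X$ in which $Q_0$ and $Q_1$ are (possibly uncountable) direct sums of perfects. Your Milnor triangle is the special case in which such a triangle arises from a sequential tower, but the general existence genuinely requires the representability input rather than a cofinality argument. Once that triangle is available, your factorization and componentwise-vanishing step finishes the proof verbatim.
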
 

The result is based on showing that homology theories in $D(A)$ satisfy Brown
representability.
It follows that if $A$ is a countable ring and $A \to B$ is faithfully flat,
then $B$ is descendable: in fact, $A$ is $B$-nilpotent of $B$-exponent at most
two because $\phi^{\otimes 2}$ is nullhomotopic. 
Using recent work of Muro-Ravent{\'o}s \cite{MuR}, one can extend this to show that if $A$ has
cardinality at most $\aleph_k$ for some $k < \infty$, then the composite of
$k+2$ phantom maps is zero. 
This leads to the following result:

\begin{proposition}[{\cite[Prop. 3.32]{MGal} or \cite[Prop. D.3.3.1]{lurie_sag}}] 
Suppose the cardinality of $A$ is at most $\aleph_k$ for some $k \in [0,
\infty)$. Then the answer to \Cref{ffq} is positive.
\end{proposition}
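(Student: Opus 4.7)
The plan is to bootstrap two facts already recorded in the excerpt: that the map $\phi: I \to A$ in the fiber sequence $I \to A \to B$ is phantom in $D(A)$, and that $A \to B$ is descendable if and only if some tensor power $\phi^{\otimes n}: I^{\otimes n} \to A$ is nullhomotopic (this is condition (2) of the proposition characterizing descendability). It suffices to produce such an $n$ when $|A| \le \aleph_k$, and I would take $n = k+2$ so as to apply the Muro-Ravent{\'o}s nilpotence bound on phantom composites that is quoted just above the statement.

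The key auxiliary observation I would record is that phantom maps form a $\otimes$-ideal in the arrow category of $D(A)$: if $f: X \to Y$ is phantom and $W \in D(A)$ is arbitrary, then $f \otimes \mathrm{id}_W$ is phantom as well. To see this, write $W = \varinjlim W_\beta$ as a filtered colimit of perfect $A$-modules; any map from a perfect $F$ into $X \otimes W$ factors through $X \otimes W_\beta$ for some $\beta$, and since $W_\beta$ is dualizable this transposes by adjunction to a map $F \otimes W_\beta^\vee \to X$ with perfect source, which $f$ kills. Untwisting the adjunction shows that $F \to X \otimes W \to Y \otimes W$ is null, as required.

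I then factor
\[ \phi^{\otimes (k+2)}: I^{\otimes (k+2)} \xrightarrow{\phi \otimes \mathrm{id}} I^{\otimes (k+1)} \xrightarrow{\phi \otimes \mathrm{id}} \cdots \xrightarrow{\phi} A, \]
using $A \otimes_A M \simeq M$ to identify successive targets. By the previous step every factor is phantom, so $\phi^{\otimes (k+2)}$ is a composite of $k+2$ phantom maps in $D(A)$ and therefore vanishes by Muro-Ravent{\'o}s. Hence $B$ is descendable of exponent at most $k+2$, giving an affirmative answer to \Cref{ffq} in this regime. The main obstacle is really the Muro-Ravent{\'o}s bound itself, which rests on Brown representability and transfinite cardinality bookkeeping in a compactly generated triangulated category; once it is granted, the rest is a short formal manipulation of the tensor-ideal property of phantom maps.
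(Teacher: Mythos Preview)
Your proposal is correct and matches the argument the paper sketches in the paragraphs immediately preceding the proposition: factor $\phi^{\otimes(k+2)}$ as a composite of $k+2$ maps each of the form $\phi \otimes \mathrm{id}$, observe these are phantom (your $\otimes$-ideal argument, or more directly via Lazard since each $I^{\otimes j}$ is a shift of a flat module), and invoke the Muro--Ravent\'os bound that $k+2$ phantom composites vanish when $|A|\le\aleph_k$. The paper does not spell out the proof beyond this outline, so you have filled in exactly the intended details.
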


In general, 
we do not know whether Question~\ref{ffq} holds without cardinality hypotheses.
Nonetheless, the answer is positive if one assumes that $A$ is
\emph{noetherian} of 
finite Krull dimension. 
We are indebted to Srikanth Iyengar for pointing out the following to us. 
\begin{theorem} 
\label{thm:flatdescsri}
Suppose $A$ is a
noetherian ring of finite Krull dimension and suppose $B$ is a faithfully flat
$A$-algebra. Then $B$ is descendable. 
\end{theorem}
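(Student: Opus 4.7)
The plan is to prove tensor-nilpotence of the map $\phi\colon I \to A$ from the fiber sequence \eqref{basicfiber} directly, by recognizing the relevant derived Hom-group as an Ext-group of flat modules and invoking a classical vanishing theorem from commutative algebra. Set $d = \dim A$.

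As recalled in the paragraph preceding the theorem, $I \simeq M[-1]$ in $D(A)$, where $M = B/A$ is a flat $A$-module. Because $M$ is flat, the derived tensor power $I^{\otimes n}$ is canonically identified with $M^{\otimes n}[-n]$ computed using the ordinary tensor product, and $M^{\otimes n}$ is again a flat $A$-module. Hence
\[ \mathrm{Hom}_{D(A)}\bigl(I^{\otimes n}, A\bigr) \;=\; \mathrm{Ext}^n_A\bigl(M^{\otimes n}, A\bigr), \]
and it suffices to exhibit some $n$ for which this group vanishes (as it then contains the class of $\phi^{\otimes n}$).

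The key input I would use is the classical theorem of Jensen: over a commutative noetherian ring of finite Krull dimension $d$, every flat module has projective dimension at most $d$. Applied to the flat module $M^{\otimes n}$, this yields $\mathrm{Ext}^i_A(M^{\otimes n}, N) = 0$ for every $A$-module $N$ and every $i > d$. Specializing to $n = d+1$ and $N = A$, the entire Ext-group containing $\phi^{\otimes(d+1)}$ vanishes, so $\phi^{\otimes(d+1)} = 0$ in $D(A)$. By the characterization of descendability as tensor-nilpotence of $\phi$, this shows $B$ is descendable, with descent exponent bounded by $d + 1$.

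The main obstacle is Jensen's theorem itself, which carries the full weight of the dimension hypothesis; I would invoke it as a black box. Its proof combines Lazard's presentation of flat modules as filtered colimits of finitely generated free modules with a transfinite induction bounding projective dimension in terms of the cardinality of a generating set, and it is in this induction that the finiteness of $\dim A$ enters essentially to eliminate the cardinality hypothesis appearing in the earlier phantom-map approach.
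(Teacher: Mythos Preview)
Your proposal is correct and essentially identical to the paper's own proof: both reduce to showing $\mathrm{Ext}^n_A(M^{\otimes n}, A) = 0$ for $n > \dim A$ via the Gruson--Jensen bound on the projective dimension of flat modules over a noetherian ring of finite Krull dimension (the paper cites \cite[Cor.~7.2]{GJ} for exactly this). The only difference is cosmetic---you fix $n = d+1$ while the paper says ``choose $n > \dim A$''---and the attribution (Jensen vs.\ Gruson--Jensen) is a minor point.
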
 

\begin{proof} 
Keep the same notation as in \eqref{basicfiber}.
 Choose $n > \dim A$. Then $I^{\otimes n} \simeq M^{\otimes
n}[-n]$, so that the map $\phi^{\otimes n}$ is classified by an element in
$\mathrm{Ext}^n_A( M^{\otimes n}, A)$. We claim that this group itself
vanishes, so that $\phi^{\otimes n}$ is nullhomotopic. 

We use \cite[Cor. 7.2]{GJ} to observe now that, as a flat $A$-module, the
projective dimension of $M^{\otimes n}$ is at most $\dim A$. It follows that
the group 
$\mathrm{Ext}^n_A( M^{\otimes n}, A)$ itself vanishes, which proves that $\phi$
is $\otimes$-nilpotent as desired. 
\end{proof}

\begin{example} 
Let $(A, \mathfrak{m})$ be a noetherian local ring. Then the map $A \to
\hat{A}$ is descendable. 
\end{example} 

If we do not assume finite Krull dimension then, by contrast, we do not even
know if $A \to \prod_{\mathfrak{p} \in \spec A}
A_{\mathfrak{p}}$ (which is faithfully flat if $A$ is noetherian) is descendable. 

We can also easily extend \Cref{thm:flatdescsri} to the case of ring spectra. A
map $A \to B$ of $\e{\infty}$-ring spectra is said to be \emph{faithfully flat}
if $\pi_0 A \to \pi_0 B$ is faithfully flat and the natural map $\pi_* A
\otimes_{\pi_0 A } \pi_0 B \to \pi_* B$ is an isomorphism. 

\begin{corollary} 
Suppose $A$ is an $\e{\infty}$-ring with $\pi_0 A$ noetherian of finite Krull
dimension and $B$ is a
faithfully flat $\e{\infty}$-$A$-algebra. Then $A \to B$ is descendable.
\end{corollary}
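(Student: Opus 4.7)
The plan is to mimic the proof of \Cref{thm:flatdescsri}, replacing the discrete $\mathrm{Ext}$ vanishing by a universal coefficient spectral sequence over $\pi_* A$. Let $\phi\colon I \to A$ be the fiber of $A \to B$ in $\md(A)$, so $I \simeq (B/A)[-1]$. Faithful flatness gives that $\pi_* A \to \pi_* B$ is injective, whence $\pi_* \phi = 0$, and $\pi_*(B/A) = M_0 \otimes_{\pi_0 A} \pi_* A$ with $M_0 := \pi_0(B/A)$ flat over $\pi_0 A$. Since $M_0$ is flat, iterated K\"unneth gives
$\pi_*\bigl((B/A)^{\otimes_A n}\bigr) = M_0^{\otimes_{\pi_0 A} n} \otimes_{\pi_0 A} \pi_* A$
for every $n \geq 1$.

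Next I would bound the projective dimension of $\pi_* I^{\otimes_A n}$ over $\pi_* A$. The result \cite[Cor. 7.2]{GJ} used in the proof of \Cref{thm:flatdescsri} gives $\mathrm{pdim}_{\pi_0 A} M_0^{\otimes n} \leq d := \dim \pi_0 A$. Since $M_0^{\otimes n}$ is flat, all its syzygies in a projective resolution are again flat, so tensoring such a resolution with $\pi_* A$ preserves exactness and produces a projective $\pi_* A$-resolution of $M_0^{\otimes n} \otimes_{\pi_0 A} \pi_* A$ of length $\leq d$. Hence $\pi_* I^{\otimes_A n}$ has projective dimension at most $d$ over $\pi_* A$ for every $n$.

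The conclusion then comes from the universal coefficient spectral sequence
\[ E_2^{s,t} = \mathrm{Ext}^{s,t}_{\pi_* A}\bigl(\pi_* I^{\otimes_A n}, \pi_* A\bigr) \Longrightarrow \pi_{t-s}\mathrm{Map}_A(I^{\otimes_A n}, A), \]
which now vanishes in columns $s > d$. The map $\phi$, being zero on $\pi_*$, has $E_2$-filtration $\geq 1$, and by the multiplicativity of the spectral sequence with respect to $\otimes_A$ the tensor power $\phi^{\otimes n}$ has filtration $\geq n$. For $n > d$ this class sits above the vanishing line and is therefore zero; that is, $\phi^{\otimes n}\colon I^{\otimes_A n} \to A$ is null, which exhibits $B$ as descendable over $A$ with nilpotence exponent $\leq d + 1$.

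The main obstacle I expect is making the multiplicative filtration bookkeeping in the spectral sequence rigorous, so that an $n$-fold $\otimes_A$-product of classes of filtration $\geq 1$ truly lands in filtration $\geq n$. A concrete substitute, if one prefers to avoid spectral-sequence multiplicativity as a black box, is to lift a finite free $\pi_* A$-resolution of $\pi_*((B/A)^{\otimes_A n})$ to an explicit Adams tower of length $\leq d+1$ in $\md(A)$ over $(B/A)^{\otimes_A n}$ (possible by standard obstruction theory, as free $\pi_* A$-modules lift to free $A$-modules). The composite $I^{\otimes n} \xrightarrow{\phi \otimes 1} I^{\otimes(n-1)} \to \dots \to I \xrightarrow{\phi} A$ then maps into the $n$-th stage of this tower and hence into the zero stage once $n > d$, giving the same conclusion without invoking multiplicativity.
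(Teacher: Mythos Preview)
Your proposal is correct and follows essentially the same route as the paper: form the fiber $I \xrightarrow{\phi} A \to B$, observe that $\phi$ is zero on $\pi_*$ hence of positive filtration in the universal coefficient (Ext) spectral sequence, and then use the Gruson--Jensen bound on the projective dimension of flat modules to get a horizontal vanishing line at $s > \dim \pi_0 A$, forcing $\phi^{\otimes n}$ to vanish once $n$ exceeds that bound. You spell out more than the paper does---the K\"unneth computation of $\pi_*((B/A)^{\otimes n})$ and the passage from a $\pi_0 A$-projective resolution to a $\pi_* A$-projective one via flatness---and your caution about the multiplicativity of filtration is well taken, but note that the paper's own argument invokes exactly the same step (``$\phi^{\otimes n}$ is detected in filtration at least $n$'') without further comment.
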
 
\begin{proof} 
Again, we form the fiber sequence $I \stackrel{\phi}{\to} A \to B$. Let $n  > \dim A$; then we
argue that $I^{\otimes n} \to A$ is nullhomotopic. 
In general, given $N_1, N_2 \in \md(A)$, we have the usual spectral sequence
(cf. \cite[Ch. IV, Th. 4.1]{EKMM})
\[ E_2^{s,t} = \mathrm{Ext}^{s,t}_{\pi_*(A)}(\pi_*(N_1), \pi_*(N_2)) \implies
\pi_{t-s} \hom_{A}(N_1, N_2).  \]
Note that $\phi$ induces zero on homotopy, so it is detected in filtration at
least one in the $\mathrm{Ext}$ spectral sequence. It follows that 
$\phi^{\otimes n}$ is detected in filtration at least $n$ in the $\mathrm{Ext}$
spectral sequence. 
However, we see already (using \cite[Cor. 7.2]{GJ}) that everything in
filtration $\geq n$ of the $\mathrm{Ext}$ spectral sequence for
$\hom_A(I^{\otimes n}, A)$ vanishes, so that $\phi^{\otimes n}$ must vanish.
\end{proof} 

We have already seen some examples
where a map which is far from faithfully flat is nevertheless descendable, for
example the quotient by a nilpotent ideal. 
Here is another example. We refer to \cite[Prop. 5.25]{BS} for a more general
statement, showing that the condition of descendability in the noetherian case is equivalent 
to being a cover in the \emph{$h$-topology.}
As this special case is 
elementary, we illustrate it here. 

\begin{theorem} 
Let $f: A \to B$ be a finite map of noetherian rings such that $\mathrm{ker}
f$ is nilpotent (equivalently, $\spec B \to \spec A$ is surjective). Then  $f$ is descendable. 
\end{theorem}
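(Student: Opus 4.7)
The plan is: first reduce $f$ to an injection via \Cref{quotbynilpideal}; then by noetherian induction on $\spec A$ reduce to the case $A$ is an integral domain; finally, close the domain case via generic splitting combined with the inductive hypothesis and a tensor-power computation.

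Since $\ker(f)$ is nilpotent, $A \to A/\ker(f)$ is descendable by \Cref{quotbynilpideal}, reducing us to the case $A \hookrightarrow B$ is injective. I would then argue by noetherian induction on closed subsets of $\spec A$, assuming the theorem for every finite injection $A/I \hookrightarrow B'$ with $I \ne 0$. A further application of \Cref{quotbynilpideal}, noting that $\mathrm{Nil}(A)\cdot B \cap A$ is nilpotent in $A$, reduces to $A$ reduced. If $A$ is reduced with more than one minimal prime, pick one such $\mathfrak{p}$ and set $\mathfrak{q} = \bigcap_{j \ne 1} \mathfrak{p}_j$; since $A$ is reduced we have $\mathfrak{p} \cap \mathfrak{q} = 0$ and (as $\mathfrak{p}$ is minimal) $\mathfrak{q} \ne 0$, so the Mayer--Vietoris exact sequence
\[
0 \longrightarrow A \longrightarrow A/\mathfrak{p} \oplus A/\mathfrak{q} \longrightarrow A/(\mathfrak{p}+\mathfrak{q}) \longrightarrow 0
\]
expresses $A$ as a fiber inside the thick $\otimes$-ideal of $D(A)$ generated by $A/\mathfrak{p} \oplus A/\mathfrak{q}$, using $A/(\mathfrak{p}+\mathfrak{q}) \simeq A/\mathfrak{p} \otimes_A A/\mathfrak{q}$. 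So $A \to A/\mathfrak{p} \oplus A/\mathfrak{q}$ is descendable; combining with the inductive hypothesis applied to each $A/\mathfrak{p} \hookrightarrow B/\mathfrak{p} B$ and $A/\mathfrak{q} \hookrightarrow B/\mathfrak{q} B$ (and iterating for $A/\mathfrak{q}$), plus the fact that descendability descends along descendable maps (via \Cref{nilpdescthm}), reduces us to $A$ an integral domain.

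For the domain case, let $K = \mathrm{Frac}(A)$. The inclusion $K \hookrightarrow B \otimes_A K$ of $K$ into a finite $K$-algebra admits a $K$-linear retraction, which spreads out to give $0 \ne s \in A$ such that $A_s \hookrightarrow B_s$ splits as $A_s$-modules. Writing $\phi : I \to A$ for the map from $I = \mathrm{fib}(A \to B)$, this means $\phi_s = 0$ in $D(A_s)$, so as an element of the finitely generated $A$-module $\mathrm{Ext}^1_A(M, A)$ (where $M = B/A$), $\phi$ is killed by some fixed $s^{K_1}$; by tensor-functoriality, $s^{K_1}\phi^{\otimes n} = 0$ for every $n \ge 1$. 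Applying the inductive hypothesis to $A/s \hookrightarrow B/sB$ (whose support $V(s) \subsetneq \spec A$), there exist $n_0$ and $\alpha : I^{\otimes n_0} \to A$ with $\phi^{\otimes n_0} = s \cdot \alpha$ in $\mathrm{Hom}_{D(A)}(I^{\otimes n_0}, A)$. Then $s^{K_1+1}\alpha = s^{K_1}\phi^{\otimes n_0} = 0$, so $s^{K_1+1}\alpha^{\otimes k} = (s^{K_1+1}\alpha) \otimes \alpha^{\otimes(k-1)} = 0$ for every $k$. Since $\phi^{\otimes k n_0} = (\phi^{\otimes n_0})^{\otimes k} = s^k\alpha^{\otimes k}$, taking $k = K_1+1$ yields $\phi^{\otimes(K_1+1)n_0} = 0$, establishing descendability.

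The main obstacle I anticipate is the multi-prime reduction: one must coordinate the Mayer--Vietoris argument with the inductive hypothesis on both factors, and then appeal cleanly to the principle that descendability descends along descendable maps. Once in the integral domain case, the key insight is that the $s^{K_1}$-torsion bound on $\phi$ at the level of $\mathrm{Ext}^1$ propagates uniformly to all tensor powers, which allows a single $s$-divisibility from the inductive hypothesis to be amplified via iterated tensoring into the required nilpotence.
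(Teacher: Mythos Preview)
Your argument is correct and follows the same route as the paper's: noetherian induction to reduce to a domain, then combine generic $s$-torsion of $\phi$ with $s$-divisibility of $\phi^{\otimes n_0}$ (from the inductive hypothesis on $V(s)$) to conclude $\phi$ is $\otimes$-nilpotent. One small point to tighten: you frame the inductive hypothesis for \emph{injections} $A/I \hookrightarrow B'$, but $A/s \to B/sB$ need not be injective---its kernel is only nilpotent (since $\spec(B/sB) \to \spec(A/s)$ is surjective by going-up). The paper sidesteps this by inducting directly on the condition ``$A/J \to B/JB$ is descendable'' for arbitrary ideals $J$, which depends only on $\sqrt{J}$; with that phrasing your proof goes through verbatim. (Your Mayer--Vietoris reduction in the reducible case is a valid alternative to the paper's simpler observation that $A \to \prod_i A/\mathfrak{p}_i$ is descendable because its kernel is the nilradical.)
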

In this proof, we will break with convention slightly and indicate derived
tensor products by $\stackrel{\mathbb{L}}{\otimes}$. This is to avoid
confusion, as we will also work with strict quotients by ideals. 
\begin{proof} 
Let $Z \subset \spec A$ be a closed subscheme, defined by an ideal $J \subset
A$. 
We consider the condition that $A/J \to B/JB$ is descendable.
Using \Cref{quotbynilpideal}, one sees that this only depends on the radical of
$J$. Thus we can consider the condition 
on closed \emph{subsets} of $\spec A$. 

By noetherian induction, we may assume that the condition holds for all proper subsets of
$\spec A$, i.e., that if $J$ is a non-nilpotent ideal, then $A/J \to B/JB$ is
descendable.
We then need to prove that $A \to B$ is descendable. 

First assume that $\spec A$ is reducible.
Let $\{\mathfrak{p}_i\}$ be the  minimal prime ideals of $A$. Then the map $A \to
\prod_{i} A/\mathfrak{p}_i$ is descendable since the intersection of the
$\mathfrak{p}_i$ is nilpotent.
By hypothesis, $A/\mathfrak{p}_i \to B/\mathfrak{p}_iB $ 
is descendable because no $\mathfrak{p}_i$ is nilpotent.
However, now by a transitivity argument it follows that $A \to B$ is descendable.

Thus, we may assume that $\spec A$ is irreducible, so that it has a unique
minimal prime ideal $\mathfrak{p}$, which is nilpotent.  Replacing $A, B$ by
$A/\mathfrak{p}, B/\mathfrak{p}B$, we may reduce to the case where
$\mathfrak{p} = 0$ so that $A$ is a domain. 
This forces $A \to B$ to be injective. 
Let $K(A)$ be the quotient field of $A$. 
Since $B \otimes_{A} K(A)$ is finite free over $K(A)$, 
there exists a nonzero $f \in A$ such that $B_f$ is
finite free over $A_f$, so that $A_f \to B_f$ is clearly descendable. 

In particular, the map $\phi$ in \eqref{basicfiber} 
gives an element of $\mathrm{Ext}^1_A(B/A, A)$ which is $f$-power torsion. 
Suppose $f^N \phi = 0$.
Note that $A/(f) \to B/(f)$ is descendable by the inductive hypothesis; therefore, so is 
$A/f \to B \stackrel{\mathbb{L}}{\otimes}_A A/f$. 
It follows that $\phi^{\otimes n}$ becomes nullhomotopic after (derived) base-change
along $A \to A/f$ for some $n$. 
Consider the object $\hom_{D(A)}( B/A^{\mathbb{L} \otimes n}, A) \in D(A)$ in which 
$\phi^{\otimes n}$ lives in $\pi_0$.  
Since $B/A$ is a finitely generated $A$-module, we see easily that 
we have an equivalence in $D(A/f)$,
$$\hom_{D(A)}( B/A^{\mathbb{L} \otimes n}, A) 
\stackrel{\mathbb{L}}{\otimes}_A A/f =
\hom_{D(A/f)}( B/A^{\mathbb{L} \otimes n} \stackrel{\mathbb{L}}{\otimes} A/f, A/f) 
.$$

Let $N \in D(A)$. The long exact sequence associated to the cofiber sequence
\( N \stackrel{f}{\to} N \to N \stackrel{\mathbb{L}}{\otimes}_A A/f   \)
shows that if $x \in \pi_0 N$ maps to zero in 
$N \stackrel{\mathbb{L}}{\otimes}_A A/f$, then $x$ is divisible by $f$. 
Taking $N = \hom_{D(A)}( B/A^{\mathbb{L} \otimes n}, A) $, we see that  we have
$\phi^{\otimes n} = f \psi$ for some $\psi \in \pi_0 \hom_{D(A)}(
B/A^{\mathbb{L} \otimes n}, A)$. It now follows that as $f^{N}
\phi = 0$, we have $\phi^{\otimes n N+1} = 0$. This proves that $A \to B$ is
descendable. 
\end{proof} 

We refer to \cite[Sec. 5]{BS} for some applications of these ideas in the
setting of \emph{perfect} rings, and in particular $h$-descent for
quasi-coherent sheaves.

\subsection{Galois extensions}

In this subsection, we note another example of 
descendability: the \emph{Galois extensions} studied by Rognes \cite{Rog08}. 
We begin with an important special example.
\begin{proposition} 
The map 
$KO \to KU$ is descendable. 
\end{proposition}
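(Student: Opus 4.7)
The plan is to exhibit explicitly a bound on the exponent of $KO$-nilpotence of $KO$ in $\md(KO)$ using Wood's classical cofiber sequence together with the nilpotence of the Hopf element $\eta \in \pi_1(KO)$.

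First I would invoke Wood's theorem, which produces a cofiber sequence of $KO$-modules
\[ \Sigma KO \xrightarrow{\eta} KO \to KU, \]
where $\eta \in \pi_1(KO)$ is (the image of) the Hopf map. Rotating this sequence identifies the fiber $I = \mathrm{fib}(KO \to KU)$, computed in $\md(KO)$, with $\Sigma KO$, and identifies the natural map $\phi: I \to KO$ with $\eta: \Sigma KO \to KO$. In particular, $I$ is an invertible $KO$-module (a suspension of the unit), so the $n$-fold tensor power $I^{\otimes n}$ in $\md(KO)$ is simply $\Sigma^n KO$, and the map $\phi^{\otimes n}: I^{\otimes n} \to KO$ is multiplication by $\eta^n$.

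Next I would use the classical computation of the homotopy groups of $KO$: the class $\eta \in \pi_1(KO)$ satisfies $\eta^3 = 0$ (since $\pi_3(KO) = 0$). Therefore $\phi^{\otimes 3}: I^{\otimes 3} \to KO$ is nullhomotopic. By the equivalent characterization of descendability given earlier (the criterion that $\phi^{\otimes N}$ be null for some $N$, equivalently that $\mathbf{1}$ have finite nilpotence exponent with respect to $KU$), this shows that $KU$ is descendable in $\md(KO)$, with $\exp_{KU}(KO) \leq 3$.

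The only nontrivial input here is Wood's cofiber sequence, which is a classical and well-known result in topological $K$-theory; everything else is formal given the vanishing $\eta^3 = 0$. There is no real obstacle, the main point to verify is simply that Wood's sequence can be interpreted as a cofiber sequence of $KO$-modules so that tensor powers of the fiber in $\md(KO)$ compute as expected; this is immediate because $\eta$ is a $\pi_*$-element and the sequence can be obtained, for instance, as the Bott cofiber sequence or as the $C_2$-fixed points of the corresponding statement in $\md(KU)$ under the homotopy fixed point identification $KO \simeq KU^{hC_2}$.
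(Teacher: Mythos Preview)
Your proposal is correct and follows essentially the same approach as the paper: both arguments rest on Wood's equivalence $KU \simeq KO \wedge C\eta$ together with the nilpotence of $\eta$ in $\pi_*(KO)$. The paper phrases the induction in terms of the cofibers $KO \wedge C\eta^n$, while you phrase it dually via the Adams tower $I^{\otimes n} \simeq \Sigma^n KO$ with $\phi^{\otimes n} = \eta^n$; these are two sides of the same filtration, and your version even recovers the sharper bound $\exp_{KU}(KO) \leq 3$ that the paper notes in a footnote.
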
 

\begin{proof} 
We use the basic equivalence, due to Wood, of $KO$-module spectra
\begin{equation} \label{KUKO}  KU \simeq KO   \wedge C \eta,  \end{equation}
where $C \eta \simeq \Sigma^{-2} \mathbb{CP}^2$ denotes the cofiber of the Hopf map, and now the fact that
$\eta$ is nilpotent. 
Namely, the equivalence
\eqref{KUKO}
implies inductively by taking cofiber sequences that $KO \wedge C \eta^{n}$ is $KU$-nilpotent for $n \geq
1$. 
Taking $n \geq 4$ (so that $\eta^4 =0 $),\footnote{Or $n \geq 3$, since $\eta^3
= 0$ in $\pi_*(KO)$.} we find that $KO$ itself is $KU$-nilpotent, as desired.
We  note that this argument is very classical and appears, for example, in the
proof of \cite[Cor. 4.7]{Bou79}. 
\end{proof} 

\newcommand{\tmf}{\mathrm{tmf}}
The analog also works for the map of connective spectra $ko \to ku$, because
the equivalence \eqref{KUKO} passes to the connective versions as well. 

We note a basic phenomenon here: the homotopy groups of $KU$ are very simple
homologically, i.e., $\pi_*(KU) \simeq \mathbb{Z}[\beta^{\pm 1}]$. By contrast,
the homotopy groups of $KO$ have infinite homological dimension, $\pi_*(KO)
\simeq \mathbb{Z}[\eta, t_4, u_8^{\pm 1}]/(2 \eta, \eta^3, t_4^2  = 2u_8)$. 
As a result, it is generally much easier to work with $KU$-modules than with $KO$-modules. 
Note that this simplification of the homological dimension is very different
from the setting of faithfully flat descent.

There are more complicated versions of this. 
For example, we have a map of $\e{\infty}$-rings $\tmf_{(2)} \to \tmf_1(3)_{(2)}$.
Here $\tmf_{(2)}$ has extremely complicated homotopy groups, while
$\pi_* \tmf_1(3)_{(2)} \simeq \mathbb{Z}_{(2)}[v_1, v_2]$  as in \cite{LN12}.  
In \cite{Htmf}, it is shown that there is a 2-local finite complex $DA(1)$ such that
$\tmf_{(2)} \wedge DA(1)  \simeq \tmf_1(3)_{(2)}$, a result originally due to
Hopkins-Mahowald. Here $DA(1)$ has torsion-free homology, and by the thick
subcategory theorem \cite{HS98}, the thick subcategory that $DA(1)$ generates
in spectra contains the 2-local sphere. 
Therefore, $\tmf_{(2)} \to \tmf_1(3)_{(2)}$ is easily seen to be
descendable.

\begin{definition}[Rognes \cite{Rog08}] 
Let $G$ be a finite group.
A \textbf{faithful} $G$-\textbf{Galois extension} 
of an $\e{\infty}$-ring $A$ consists of an $\e{\infty}$-$A$-algebra $B$
together with a $G$-action such that:
\begin{enumerate}
\item The natural map $A\to B^{hG}$ is an equivalence. 
\item  The natural map $B \otimes_A B \to \prod_G B$ is an equivalence.
\item $\otimes A$ is conservative on $\mathcal{C}$. 
\end{enumerate}
\end{definition}

\begin{example} 
Let $A \to A'$ 
be a $G$-Galois extension of commutative rings (i.e., $\spec A' \to \spec A$ is
a $G$-torsor in the sense of algebraic geometry). Then the induced map on
associated $\e{\infty}$-rings is Galois in Rognes's sense.
\end{example}

\begin{example} 
The fundamental example of a Galois extension is that $KO \to KU$ is a faithful
$C_2$-Galois extension, where $C_2$ acts on $KU$ via complex conjugation. 
The extensions of connective spectra $\tmf_{(2)} \to \to \tmf_1(3)_{(2)}$ are
not Galois, but the periodic analogs fit into a Galois picture. Compare
\cite{MM15}. 
\end{example} 

\begin{proposition} 
\label{galisdesc}
Suppose 
$A \to B$ is a faithful $G$-Galois extension. Then $A \to B$ is descendable.
\end{proposition}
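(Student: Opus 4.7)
The plan is to invoke \Cref{dualdesc} with $\mathcal{C} = \md(A)$ and the algebra object $B$; that theorem yields descendability once we verify that the unit is compact, that $B$ is dualizable over $A$, and that $\cdot \otimes_A B$ is conservative.

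Two of these three conditions are immediate. The unit $A \in \md(A)$ is always compact, since $\hom_{\md(A)}(A,-) \simeq \mathrm{id}$ preserves filtered colimits. Conservativity of the extension-of-scalars functor $\cdot \otimes_A B \colon \md(A) \to \md(A)$ is precisely condition~(3) in the definition of a faithful $G$-Galois extension.

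The substantive step, and the main obstacle, is therefore dualizability of $B$ as an $A$-module. This is a theorem of Rognes \cite{Rog08} for faithful $G$-Galois extensions with $G$ finite, which one may simply cite. Heuristically, combining the identification $B \otimes_A B \simeq \prod_G B$ from condition~(2) with the identification $A \simeq B^{hG}$ from condition~(1) produces a trace/norm pairing $B \otimes_A B \to A$ that is perfect and exhibits $B$ as self-dual over $A$; making this and the attendant triangle identities fully precise at the $\infty$-categorical level is the content of Rognes's argument, since $B$ need not be literally a finite free $A$-module.

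Granting this dualizability, \Cref{dualdesc} applies directly and shows that $B$ is descendable in $\md(A)$, which by definition is the assertion that $A \to B$ is descendable. An alternative route, avoiding an external appeal for dualizability but ultimately using the same inputs, would be to run the cobar construction $\cb(B)$ in $\md(A)$ directly: each term $B^{\otimes_A(n+1)} \simeq \prod_{G^n} B$ is manifestly $B$-nilpotent by condition~(2), and its totalization is $B^{hG} \simeq A$ by condition~(1), so if one can promote this totalization to a \emph{quickly converging} cosimplicial object then \Cref{quickconvthicksubcat} immediately concludes. The quick convergence amounts to the same content as dualizability, however, so either way this is where the work of the proof lies.
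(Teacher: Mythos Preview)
Your proposal is correct and matches the paper's proof essentially verbatim: the paper invokes Rognes \cite[Prop.~6.2.1]{Rog08} for dualizability of $B$ over $A$, notes conservativity from faithfulness, and applies \Cref{dualdesc}. Your additional remark that compactness of the unit in $\md(A)$ must be checked is a good point the paper leaves implicit, and the alternative route you sketch at the end is extra but not needed.
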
 
\begin{proof} 
In fact, $B$ is dualizable as an $A$-module by \cite[Prop. 6.2.1]{Rog08}.
Since tensoring with $B$ is conservative on $\md(A)$, 
the result now follows from \Cref{dualdesc}. 
\end{proof} 

We refer to \cite{MGal} for more details on the relationship between Galois
theory and descendability. In particular, a treatment is given there of how one
can fit Rognes's theory into the ``axiomatic Galois theory'' of \cite{SGA1}:
the Galois extensions are precisely torsors with respect to descendable
morphisms. 

We end this subsection by describing how the general descent theorem
(\Cref{nilpdescthm})
applies in the case of a Galois extension. If $A \to B$ is $G$-Galois, then the
$G$-action on $B$ induces a $G$-action on the $\infty$-category $\md(B)$. 
The following result has been observed independently by many authors. Compare in
particular \cite{GL, Hess09,
Mei12, Ban13}. \begin{theorem}[Galois descent] 
Suppose $A \to B$ is a faithful $G$-Galois extension. Then we have an
equivalence of symmetric monoidal $\infty$-categories $\md(A) \simeq
\md(B)^{hG}$.  
\end{theorem}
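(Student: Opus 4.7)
The plan is to combine Proposition \ref{galisdesc} with the general descent theorem to reduce the statement to a direct identification of the $\infty$-category of descent data with the homotopy fixed points. By Proposition \ref{galisdesc}, the faithful $G$-Galois extension $A \to B$ is descendable, so Theorem \ref{nilpdescthm} furnishes a symmetric monoidal equivalence
\[
\md(A) \simeq \desc_B(\md(A)) = \mathrm{Tot}\big(\md(B^{\otimes_A (\bullet+1)})\big),
\]
where we have used the canonical identification $\md_{\md(A)}(B^{\otimes_A (n+1)}) \simeq \md(B^{\otimes_A (n+1)})$. It therefore suffices to identify this totalization with $\md(B)^{hG}$ as symmetric monoidal $\infty$-categories.

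To do this, I would use the Galois condition $B \otimes_A B \simeq \prod_G B$. Iterating the equivalence along each tensor factor gives a chain of equivalences of $\e{\infty}$-$A$-algebras
\[
B^{\otimes_A (n+1)} \simeq \prod_{G^n} B = \mathrm{Map}(G^n, B),
\]
under which the cosimplicial structure on the Amitsur complex $\cb(B)$ matches the one on $\mathrm{Map}_G(EG_\bullet, B)$ coming from the standard bar resolution $EG_\bullet$ of the previous example (with $EG_n = G^{n+1}$), where $G$ acts on $B$ via the Galois action. Applying the functor $\md$ converts finite products of $\e\infty$-rings into finite products of module $\infty$-categories, yielding a levelwise equivalence of cosimplicial objects in $\clg(\cati)$:
\[
\md\big(B^{\otimes_A (\bullet+1)}\big) \simeq \md(B)^{G^\bullet} \simeq \mathrm{Map}_G(EG_\bullet, \md(B)).
\]
By definition of homotopy fixed points in $\cati$ (applied to the symmetric monoidal $G$-action on $\md(B)$ induced by the Galois action on $B$), the totalization of the right-hand side is $\md(B)^{hG}$, and combining with the descent equivalence gives the theorem.

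The main obstacle is the compatibility of cosimplicial structures: one must verify that, under the iterated Galois equivalence, the coface maps of the Amitsur complex (unit insertions into $B^{\otimes_A n}$) and its codegeneracies (multiplications of adjacent factors) correspond exactly to the operators coming from the simplicial structure of $EG_\bullet$. This is essentially forced by the requirement that the Galois isomorphism $B \otimes_A B \simeq \mathrm{Map}(G, B)$ be a map of $\e{\infty}$-$A$-algebras equivariant for the evident $G$-actions on both sides, and it is a finite verification in low cosimplicial degrees; nevertheless, making this rigorous at the level of cosimplicial diagrams in $\clg(\md(A))$ requires some care, which is where the bulk of the work lies.
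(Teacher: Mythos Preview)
Your proposal is correct and is in fact the standard argument. The paper itself does not supply a proof of this theorem: it simply states the result and points to the literature (\cite{GL, Hess09, Mei12, Ban13}), noting that it has been observed independently by many authors. Your argument---combining descendability of $B$ (\Cref{galisdesc}) with the general descent theorem (\Cref{nilpdescthm}) and then using the Galois isomorphism $B \otimes_A B \simeq \prod_G B$ to identify the Amitsur complex with the cosimplicial object computing $\md(B)^{hG}$---is exactly the proof one finds in those references, in particular in \cite{MGal}. Your flagged ``obstacle,'' the compatibility of cosimplicial structures, is genuine but routine: it amounts to the observation that the iterated Galois equivalence identifies the \v{C}ech nerve of $\spec B \to \spec A$ with the \v{C}ech nerve of $\ast \to BG$, which is precisely $EG_\bullet$.
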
 

This result can also be proved for $G$-Galois extensions when $G$ is not
necessarily finite (which we do not discuss here). We refer to \cite[Th. 9.4]{MGal} for the statement when $G$
is allowed to be a topological group and \cite{MPic} for an application. 

\begin{remark} 
We refer to \cite{MS, HMS} for 
a description of how these results can be used to calculate Picard groups of
ring spectra. For instance, it is  
possible to calculate the Picard group of $KU$-modules relatively directly
using the homotopy groups of $KU$ (compare \cite{BakerRichter}), while
invertible modules over $KO$ can then be determined by descent. 
\end{remark} 

\subsection{The Devinatz-Hopkins-Smith nilpotence theorem}

The following result is fundamental for all the applications of nilpotence in
stable homotopy theory: a criterion for the nilpotence of elements
in ring spectra.
Throughout, we work in $\mathcal{C} = \sp$.

We let $MU$ denote the $\e{\infty}$-ring spectrum of \emph{complex bordism.} 
We will also need to use the ring spectra $K(n)$ associated to an implicit
prime $p$ and a height $n$. These are no longer $\e{\infty}$, but they are
$\e{1}$-ring spectra. 
We have $\pi_* K(n) \simeq \mathbb{F}_p[v_n^{\pm 1}]$ with $|v_n| = 2(p^n -1)$. 

\begin{theorem}[Devinatz-Hopkins-Smith \cite{DHS88}] 
\label{DHSthm}
If $R$ is a ring spectrum and $\alpha \in \pi_*(R)$ maps to zero in $MU_*(R)$,
then $\alpha$ is nilpotent. 
\end{theorem}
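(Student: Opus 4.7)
The plan is to follow the original Devinatz--Hopkins--Smith argument, which first reduces the ring-spectrum form of the nilpotence theorem to a statement about self-maps of finite spectra and then proves that statement by a chromatic induction on Thom spectra approximating $MU$.

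First I would reduce to the following self-map version: if $f \colon \Sigma^d F \to F$ is a self-map of a finite spectrum such that $1_{MU} \wedge f$ is nullhomotopic, then $f^{\circ n} = 0$ for some $n$. Given a putatively non-nilpotent $\alpha \in \pi_d R$ with $MU_* \alpha = 0$, a compactness argument on the telescope of iterates of multiplication by $\alpha$ on $R$ (or on a finitely presented sub-object containing $\alpha$) extracts such an $f$: the hypothesis $MU_* \alpha = 0$ forces $1_{MU} \wedge f = 0$, while non-nilpotence of $\alpha$ passes to non-nilpotence of $f$.

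The substantive part is the self-map statement, which is proved by a descending induction along the Snaith filtration $S = X(1) \to X(2) \to \cdots \to \mathrm{hocolim}_n X(n) = MU$, where $X(n)$ is the Thom spectrum of the canonical bundle over $\Omega SU(n)$. At each step one shows that if $f$ is $X(n+1)$-nilpotent in an appropriate sense, then $f$ is already $X(n)$-nilpotent after possibly inverting a suitable class; the base case $X(1) = S$ then yields genuine nilpotence of $f$. To connect the inductive setup to the framework of this paper, one would interpret the $X(n+1)$-nilpotence hypothesis as an instance of $A$-nilpotence for a Thom-spectrum algebra $A$, and use the vanishing-line consequences of $A$-nilpotence recorded in \Cref{degeneratefinitevanishingline} and \Cref{quickconvss}.

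The main obstacle is this inductive step. It hinges on constructing a sequence of auxiliary ring spectra interpolating between $X(n)$ and $X(n+1)$ which, after inverting a carefully chosen class $v$, carry enough multiplicative structure to run an Adams-type spectral sequence that collapses with a horizontal vanishing line at a finite page. The framework developed earlier in the paper --- Adams towers, quickly converging cobar constructions, and nilpotence exponents --- packages the inputs and outputs of this induction cleanly and explains conceptually why a vanishing line suffices, but it does not shortcut the geometric and combinatorial content of the Hopkins--Smith construction of the spectra $X(n)$ and their localizations. That construction, together with the verification that the relevant classes become permanent cycles in the localized spectral sequences, is where the real difficulty lies.
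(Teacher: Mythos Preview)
The paper does not prove this theorem. It is stated as a citation of the foundational result of Devinatz--Hopkins--Smith \cite{DHS88} and no proof is given in the paper itself. What the paper does instead is reinterpret the statement: it shows (\Cref{equivnilpthm}) that the nilpotence theorem is equivalent to the growth condition $f_{MU_{(p)}}(k) = o(k)$ on the $MU$-nilpotence exponents of Postnikov truncations of the sphere, and the proof of that reformulation explicitly imports \cite[Prop.~4.5]{DHS88} and the thick subcategory arguments of \cite{HS98} as black boxes. So there is no ``paper's own proof'' to compare against.

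Your sketch is a reasonable high-level outline of the original \cite{DHS88} argument, and you correctly identify where the genuine content lies: the inductive step along the filtration $X(1) \to X(2) \to \cdots \to MU$ and the vanishing-line arguments in the intermediate localized Adams spectral sequences. One quibble: your proposed reduction to self-maps of finite spectra via a compactness/telescope argument is not the route taken in \cite{DHS88}; there the reduction is to \emph{smash-nilpotence} of maps out of finite spectra (and one first reduces to the connective case, as the paper also notes). The self-map formulation you describe is closer to the Hopkins--Smith \cite{HS98} consequences. This does not affect the substance of your outline, but if you intend to follow the original literature you should be aware of which direction the reductions actually run.
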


\begin{corollary}[Hopkins-Smith \cite{HS98}] 
Let $R$ be a ring spectrum and let $\alpha \in \pi_*(R)$. Then the following
are equivalent: 
\begin{enumerate}
\item $\alpha$ is nilpotent. 
\item
The image of $\alpha $ in $K(n)_* (R)$ for each implicit prime $p$ and $0 < n <
\infty$ is nilpotent. Similarly, the image of $\alpha$ in
$(H\mathbb{F}_p)_*( R)$ for each $p$ and $ H\mathbb{Q}_* (R)$ is nilpotent.
\end{enumerate}
\end{corollary}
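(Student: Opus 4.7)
The direction (1) $\Rightarrow$ (2) is immediate: any ring spectrum map sends nilpotents to nilpotents. For the converse, my plan is to reduce to \Cref{DHSthm} by producing some $N$ with $\alpha^N = 0$ in $MU_*(R)$; \Cref{DHSthm} then forces $\alpha^N$, and hence $\alpha$, to be nilpotent in $\pi_*R$.

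To carry this out, I would form the telescope
\[ T := \varinjlim\bigl(R \xrightarrow{\alpha} \Sigma^{-|\alpha|}R \xrightarrow{\alpha} \Sigma^{-2|\alpha|}R \to \cdots \bigr). \]
For any homology theory $E$, one has $E_*(T) = \alpha^{-1}E_*(R)$, and this vanishes precisely when the image of $\alpha$ in $E_*(R)$ is nilpotent. Hypothesis (2) therefore asserts that $T$ is acyclic with respect to $K(n)$ for every prime $p$ and $0<n<\infty$, with respect to $H\mathbb{F}_p$ for every $p$, and with respect to $H\mathbb{Q}$.

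The crucial input is the Hopkins--Smith identification of Bousfield classes: the class $\langle MU \rangle$ equals the join of $\langle H\mathbb{Q}\rangle$, $\langle H\mathbb{F}_p\rangle$ for all $p$, and $\langle K(n)\rangle$ for all $p$ and $0<n<\infty$. Granting this, $T$ is $MU$-acyclic, so $\alpha^{-1}MU_*(R) = 0$; since the unit $1 \in MU_*(R)$ must die in this localization, there exists $N$ with $\alpha^N = 0$ in $MU_*(R)$, which is exactly what is needed to invoke \Cref{DHSthm}.

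The main obstacle is not the reduction itself but the Bousfield class identification, which is of essentially the same depth as \Cref{DHSthm}. One proves it by working one prime at a time, replacing $MU_{(p)}$ by $BP$, and then inducting through the chromatic tower: the sequence of quotients $BP \to BP/(p) \to BP/(p,v_1) \to \cdots$, together with the telescopic localizations $v_n^{-1}BP/I_n$, allows one to trade $BP$-acyclicity for acyclicity conditions that can be detected by the various $K(n)$ together with $H\mathbb{F}_p$ and $H\mathbb{Q}$, with \Cref{DHSthm} applied at each inductive step to the appropriate quotient ring spectrum.
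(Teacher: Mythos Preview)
The paper does not actually supply a proof of this corollary; it is simply quoted as a result of Hopkins--Smith \cite{HS98}, placed immediately after \Cref{DHSthm} to indicate that it is a consequence. So there is no ``paper's own proof'' to compare against.

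Your argument is correct and is exactly the approach taken in the original Hopkins--Smith paper: form the telescope $T = \alpha^{-1}R$, observe that hypothesis (2) makes $T$ acyclic for every $K(n)$, $H\mathbb{F}_p$, and $H\mathbb{Q}$, invoke the Bousfield class decomposition $\langle MU\rangle = \bigvee_{p,n}\langle K(n)\rangle$ (with the endpoints $K(0)=H\mathbb{Q}$ and $K(\infty)=H\mathbb{F}_p$ included) to conclude $MU_*T=0$, and then read off that some $\alpha^N$ vanishes in $MU_*(R)$, which feeds into \Cref{DHSthm}. You are also right that the Bousfield class statement is where the real work lies and that its proof is itself an inductive application of \Cref{DHSthm} through the chromatic filtration of $BP$; your sketch of that induction is accurate. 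There is no circularity: both the corollary and the Bousfield class identification sit strictly downstream of \Cref{DHSthm}.
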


We remark that the benefit of the above results is that the $K(n)_*$ and
$MU_*$-homology theories are much more computable than stable homotopy
groups itself. That is, while $MU_*$ and $K(n)_*$ are much easier to work with
than $\pi_*$, they are still strong enough to detect nilpotence. 
Our goal in this subsection is to explain how one can interpret the nilpotence
theorem in terms of exponents of nilpotence. 
This subsection is essentially an amplification of remarks of Hopkins \cite{Hopbirthday}.

We remark first that \Cref{DHSthm} would be immediate if we 
knew that $MU$ was \emph{descendable} in spectra. It is also immediate
for ring spectra which are $MU$-nilpotent.
This is not the case, as
there are nontrivial spectra $Y$ with $MU \wedge Y = 0$, for instance the
Brown-Comenentz dual of the sphere (cf. \cite[Lem. B.11]{HoSt}). 
Nonetheless, a partial version of this statement holds in some generality.

As explained in \cite{DHS88}, it suffices to assume that $R$ is
\emph{connective} in \Cref{DHSthm}. 
Let $X$ be a connective spectrum. 
In this case, while $X$ need not be $MU$-nilpotent, the truncations $\tau_{\leq k} X$ are necessarily $MU$-nilpotent
because they can be finitely built from Eilenberg-MacLane spectra. 
The $MU$-exponents of nilpotence of 
$\tau_{\leq k} X$
turn out to give an equivalent formulation of the nilpotence theorem. 
We will in fact formulate a general question for an connective ring spectrum.

For simplicity, we work localized at a prime number $p$. 

\begin{definition} 
Let $R$ be a connective $p$-local ring spectrum with $\pi_0 R =\mathbb{Z}_{(p)}$ and
$\pi_i R$ a finitely generated $\mathbb{Z}_{(p)}$-module for all $i$. 
We define the function $f_R: \mathbb{Z}_{\geq 0} \to \mathbb{Z}_{\geq 0}$ via
the formula
\[ f_R(k) = \exp_R( \tau_{\leq (k-1)} S^0_{(p)}) . \]
\end{definition} 

In general, it is probably impossible to calculate $f_R(k)$ exactly except in
very low degrees, but we ask the following general question.

\begin{question} 
What is the behavior of the function $f_R(k)$ as $k
\to \infty$? 
\end{question}

We note first that the function $f_R$ is \emph{subadditive}, i.e., $f(k_1 + k_2) \leq
f(k_1) +f(k_2)$; this follows from the cofiber sequence
$\tau_{[k_1, k_1+k_2-1]} S^0_{(p)} \to \tau_{ \leq  k_1 + k_2 - 1} S^0_{(p)} \to \tau_{\leq
k_1 -1 } S^0_{(p)}$ and the fact that 
$\tau_{[k_1, k_1+k_2-1]} S^0_{(p)}$ is a module over $\tau_{\leq k_2 -1}
S^0_{(p)}$. 
It follows that
\[ \lim_{k \to \infty} \frac{f_R(k)}{k} \]
exists, and as $f_R(1) = 1$ we find that this limit
is between $0$ and $1$.

\begin{proposition} 
\label{fRk}
The following are equivalent: 
\begin{enumerate}
\item $f_R(k) \leq m$.  
\item Let $X$ be a connective $p$-local spectrum and let $i \leq k-1$. Any
$\alpha \in
\pi_i X$ of  $R$-based Adams filtration of $\alpha$ at least $m$ vanishes.
It suffices to take $X$ with  finitely generated homology. 
\end{enumerate}
\end{proposition}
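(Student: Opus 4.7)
The plan is to first unpack the definitions. By the second characterization of nilpotence exponent in \Cref{def:nilpexp}, the condition $f_R(k) \leq m$ is equivalent to the nullity of the natural map $\phi^m \otimes 1_Y \colon I^{\otimes m} \otimes Y \to Y$, where $Y = \tau_{\leq k-1} S^0_{(p)}$ and $I = \mathrm{fib}(S^0_{(p)} \to R)$; note that $I$ is $1$-connective since the unit map is an isomorphism on $\pi_0$, so $I^{\otimes m}$ is $m$-connective. Similarly, $\alpha \in \pi_i X$ has $R$-based Adams filtration $\geq m$ precisely when $\alpha \colon S^i \to X$ factors as $S^i \to I^{\otimes m} \otimes X \xrightarrow{\phi^m \otimes 1_X} X$.

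For (1) $\Rightarrow$ (2), the fiber of the unit map $S^0_{(p)} \to Y$ is $\tau_{\geq k} S^0_{(p)}$, which is $k$-connective; smashing with any connective $p$-local $X$ keeps this fiber $k$-connective, so the map $X \to Y \otimes X$ induces an isomorphism $\pi_i X \xrightarrow{\cong} \pi_i(Y \otimes X)$ for $i \leq k-1$. Given $\alpha \in \pi_i X$ of Adams filtration $\geq m$, factoring through $I^{\otimes m} \otimes X$, the image of $\alpha$ in $\pi_i(Y \otimes X)$ factors through $(\phi^m \otimes 1_Y) \otimes 1_X$, which is null by (1). Hence $\alpha = 0$.

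For (2) $\Rightarrow$ (1), I would first apply (2) with $X = I^{\otimes m} \otimes Y$, which is connective, $p$-local, and has finitely generated homotopy in each degree under the standing hypotheses on $R$. Any $\beta \in \pi_i(I^{\otimes m} \otimes Y)$ composes with $\phi^m \otimes 1_Y$ to yield a class in $\pi_i Y$ of Adams filtration $\geq m$, which vanishes by (2) for $i \leq k-1$ and automatically for $i > k-1$ since $\pi_i Y = 0$. So $\phi^m \otimes 1_Y$ induces zero on all $\pi_*$.

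The remaining and hardest step is to upgrade this $\pi_*$-vanishing to genuine nullity of the map, which I would carry out by induction on $k$. For $k \leq m$, this is automatic because an elementary Postnikov computation gives $[I^{\otimes m} \otimes Y, Y]_0 = 0$, the $m$-connectivity of the source forcing all the relevant $H^j(-;\pi_j Y)$ terms to vanish for $j \leq k-1$. For $k > m$, use the Postnikov cofiber sequence $\Sigma^{k-1} H\pi_{k-1}(S^0_{(p)}) \to Y \to \tau_{\leq k-2} S^0_{(p)}$: condition (2) for $k$ restricts to (2) for $k-1$, so by the inductive hypothesis $\phi^m \otimes 1_{\tau_{\leq k-2} S^0_{(p)}}$ is null, and $\phi^m \otimes 1_Y$ therefore lifts to a class $h \in H^{k-1}(I^{\otimes m} \otimes Y;\, \pi_{k-1} S^0_{(p)})$. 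The Hurewicz component of $h$ is killed by the already-established vanishing on $\pi_{k-1}$; ruling out the $\mathrm{Ext}^1$ contribution, which is invisible to $\pi_*$ of the source, is the main obstacle, and it must be handled by applying (2) to further auxiliary connective $p$-local spectra (such as mapping cones along Hurewicz maps), which is precisely where the universal quantifier over all such $X$ in (2) pays off.
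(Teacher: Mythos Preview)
Your argument for (1) $\Rightarrow$ (2) is correct and is essentially the paper's proof: smash with $\tau_{\leq k-1} S^0_{(p)}$, use that this does not change $\pi_i$ for $i \leq k-1$, and invoke the assumed nullhomotopy.

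The direction (2) $\Rightarrow$ (1) is where you diverge from the paper, and here there is a genuine gap. Your first step---showing that $\phi^m \otimes 1_Y$ induces zero on all $\pi_*$---is fine. The difficulty is exactly the one you flag: upgrading this to an honest nullhomotopy. Your Postnikov induction reduces the problem to showing that a certain class $h \in H^{k-1}(I^{\otimes m}\otimes Y;\,\pi_{k-1}S^0_{(p)})$ lies in the image of $[I^{\otimes m}\otimes Y,\,\Sigma^{-1}\tau_{\leq k-2}S^0_{(p)}]$. Knowing that $h$ vanishes on $\pi_{k-1}$ only says that $h$ dies under the composite $H^{k-1}(-)\to\mathrm{Hom}(H_{k-1}(-),\pi_{k-1})\to\mathrm{Hom}(\pi_{k-1}(-),\pi_{k-1})$; this controls neither the $\mathrm{Ext}^1$ part nor the portion of the $\mathrm{Hom}$ part coming from the cokernel of the Hurewicz map. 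Your suggestion of ``mapping cones along Hurewicz maps'' does not obviously close this: hypothesis (2) still only gives $\pi_i$-vanishing for $i\leq k-1$, and the obstruction you are chasing lives in cohomology of the source that is not visible through low-degree homotopy of any single auxiliary $X$.

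The paper sidesteps this entirely with a Spanier--Whitehead duality trick. Since $R$ has finitely generated homology, one reduces to showing that for any finite $p$-local connective $F$ with cells in degrees $\leq k-1$ and any map $F\to I_R^{\wedge m}$, the composite $\phi\colon F\to \tau_{\leq k-1}S^0_{(p)}$ is null. Passing to the adjoint gives a class $a\in\pi_0(\mathbb{D}F\wedge\tau_{\leq k-1}S^0_{(p)})$ of $R$-Adams filtration $\geq m$; because the cells of $F$ lie in degrees $\leq k-1$, the spectrum $\Sigma^{k-1}\mathbb{D}F$ is connective, so $a$ becomes an element of $\pi_{k-1}$ of a connective $p$-local spectrum of Adams filtration $\geq m$, and (2) kills it directly. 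This converts the nullhomotopy question into a \emph{single} application of (2) in degree $k-1$, with no obstruction theory needed---precisely what your approach is missing.
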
 
\begin{proof} 
This follows from a straightforward diagram chase. 
Suppose $f_R(k) \leq m$ and fix $\alpha \in \pi_i X$ of filtration at
least $m$, for some $i \leq k-1$. 
Let $I_R$ be the fiber of the unit $S^0_{(p)} \to R$. 
By the discussion of the Adams tower in sec. 2, we then have a lifting of
$\alpha: S^i_{(p)} \to X$ through $I_R^{\wedge m} \wedge
X$  and form the diagram
\[ 
\xymatrix{
& I_R^{\wedge m} \wedge X \ar[d] \ar[r]^{p_2} &  I_R^{\wedge m} \wedge X
\wedge \tau_{\leq k-1}
S^0_{(p)}  \ar[d]^0 \\
S^i_{(p)} \ar[r]^{\alpha} \ar[ru] &  X \ar[r]^{p_1} &  X \wedge \tau_{\leq k-1}
S^0_{(p)} 
}
.\]
The hypothesis that $f_R(k) \leq m$ implies that the map $I_R^{\wedge m} \wedge
\tau_{\leq k-1} S^0_{(p)} \to 
\tau_{\leq k-1} S^0_{(p)}$ is nullhomotopic. Hence the smash product with $X$, which is
the right vertical map in the diagram, is also nullhomotopic. 
Since the map $p_1$ is an isomorphism on $\pi_i$, the commutativity of the
diagram now implies that $\alpha$ is nullhomotopic.

Now suppose that the second hypothesis holds. We need to show that 
the map $I_R^{\wedge m} \wedge \tau_{\leq k-1} S^0_{(p)} \to \tau_{\leq k-1}
S^0_{(p)}$
is null. Since $R$ has finitely generated homology,  it
suffices to show that if $F$ is a $p$-local finite connective spectrum with cells in degrees 
up to $k-1$ equipped with a map $F \to I_R^{\wedge m}$, then the composite
\[ \phi: F \to I_R^{\wedge m} \wedge \tau_{\leq k-1} S^0_{(p)} \to \tau_{\leq
k-1} S^0_{(p)},  \]
is nullhomotopic. 

The composite 
$\phi$ is of $R$-based Adams filtration at least $m$. It follows that the
adjoint map $a: S^0_{(p)} \to \tau_{\leq k-1} S^0_{(p)} \wedge \mathbb{D}F$ also has $R$-based
Adams filtration at least $m$. Since the cells of $F$ are in degrees up to
$k-1$, it follows that $\Sigma^{k-1}\mathbb{D} F$ is connective. Thus, we obtain a map 
\[ \Sigma^{k-1} a: S^{k-1}_{(p)} \to  
\Sigma^{k-1} \tau_{\leq k-1} S^0_{(p)} \wedge \mathbb{D}F
\]
which has Adams filtration $\geq m$, and where the target is connective. It
follows that $\Sigma^{k-1}a$ is null by our hypotheses, which implies that $\phi$
is null as desired. 
\end{proof}

\begin{proposition} 
Let $R$ be a connective ring spectrum with $\pi_0 R = \mathbb{Z}_{(p)}$ and suppose that $f_R(k) = o(k)$ as $k \to \infty$.
Then $R$ detects nilpotence, i.e., if $R'$ is a connective $p$-local ring spectrum and
$\alpha \in \pi_k(R')$ maps to zero in $\pi_k(R \wedge R' )$, then $\alpha$ is
nilpotent.
\end{proposition}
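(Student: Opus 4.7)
The plan is to show that some power $\alpha^n$ vanishes by combining the multiplicativity of the $R$-based Adams filtration with \Cref{fRk} and the sublinear growth assumption $f_R(k) = o(k)$.

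First I would translate the hypothesis into Adams-filtration language: the condition that $\alpha \in \pi_k(R')$ maps to zero in $\pi_k(R \wedge R')$ is equivalent to the map $1_R \wedge \alpha : R \wedge S^k \to R \wedge R'$ being nullhomotopic, using that $R \wedge S^k$ is a free $R$-module of rank one (so an $R$-linear map out of it is determined by its restriction to the unit). Thus $\alpha$ is $R$-zero, i.e., has $R$-based Adams filtration $\geq 1$. Next, exploiting the ring structure on $R'$, I would observe that the $n$-fold product $\alpha^n : S^{nk}_{(p)} \to R'$ factors through the smash power $\alpha^{\wedge n}$ followed by the iterated multiplication $(R')^{\wedge n} \to R'$, and that $\alpha^{\wedge n}$ decomposes as a composite of $n$ maps of the form $1 \wedge \cdots \wedge \alpha \wedge \cdots \wedge 1$, each of which is $R$-zero because $R$-zero maps remain $R$-zero after smashing with arbitrary objects. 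Hence $\alpha^n$ is an $n$-fold composite of $R$-zero maps and so has $R$-based Adams filtration $\geq n$.

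The concluding step is to apply \Cref{fRk} with $X = R'$, degree $i = nk$, and filtration exponent $m = n$: whenever $f_R(nk+1) \leq n$, the class $\alpha^n \in \pi_{nk}(R')$ must vanish. Since $f_R(m) = o(m)$, taking $\epsilon = 1/(2k)$ gives $f_R(nk+1) < (nk+1)/(2k) \leq n$ once $n$ is sufficiently large (for fixed $k \geq 1$); for $k = 0$ it suffices to take any $n \geq f_R(1)$, which is finite because $\mathbb{Z}_{(p)}$ admits an $R$-module structure via $R \to \pi_0 R$. Thus $\alpha^n = 0$ for some $n$, proving $\alpha$ is nilpotent. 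The only step that requires any real care is the multiplicativity of the Adams filtration for ring spectra; the rest is a direct application of \Cref{fRk} and an elementary asymptotic estimate.
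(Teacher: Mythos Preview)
Your proof is correct and follows essentially the same approach as the paper: both use multiplicativity of the $R$-based Adams filtration to show $\alpha^n$ has filtration $\geq n$, then apply \Cref{fRk} together with the sublinear growth $f_R(k) = o(k)$ to conclude that some power vanishes. You supply more detail on the multiplicativity step and the $k=0$ edge case than the paper does, but the argument is the same.
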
 
\begin{proof} 

Let $R'$ be a connective $p$-local ring spectrum and let $\alpha \in \pi_n (R')$ map to
zero in $\pi_n (R \wedge R') = \pi_n (R  \wedge  \tau_{\leq n}R')$. 
For each $m$, it follows that $\alpha^m$ is detected in $R$-based Adams filtration at least 
$m$ in $\pi_{nm} (  \tau_{\leq nm} R')$. 
Once $m$ is chosen large enough such that
\[ \exp_{R} (  \tau_{\leq nm} R') \leq f_R(nm+1) < m, \]
it follows that $\alpha^m =0$ by \Cref{fRk}, which proves that $\alpha$ is nilpotent as
desired.
\end{proof}

We then have the following equivalent reformulation of the nilpotence theorem.

\begin{theorem}[Nilpotence theorem, equivalent reformulation] 
We have $f_{MU_{(p)}}(k) = o(k)$ as $k \to \infty$.
In particular, if $X$ is a connective $p$-local spectrum,  then
$\exp_{MU_{(p)}}( \tau_{\leq k} X) = o(k)$ as
$k \to \infty$. 
\label{equivnilpthm}
\end{theorem}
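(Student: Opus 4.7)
The plan is to deduce the asymptotic $f_{MU_{(p)}}(k) = o(k)$ from the Devinatz--Hopkins--Smith nilpotence theorem (\Cref{DHSthm}); the reverse implication, that $f_{MU_{(p)}}(k) = o(k)$ recovers nilpotence detection, is already the content of the preceding proposition, so the two statements form an equivalent reformulation of one another.

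First I would reduce the claim to a vanishing-line statement for the $MU_{(p)}$-based Adams spectral sequence for the $p$-local sphere. By definition $f_{MU_{(p)}}(k) = \exp_{MU_{(p)}}(\tau_{\leq k - 1} S^0_{(p)})$, and combining the description of $\exp$ in terms of $I^{\wedge m}$-nilpotence (discussion following \Cref{def:nilpexp}) with \Cref{fRk}, this exponent equals (up to a bounded additive error) the maximum $MU_{(p)}$-based Adams filtration of a nonzero permanent cycle in $\pi_i(S^0_{(p)})$ for $i \leq k - 1$. The ``in particular'' clause for an arbitrary connective $p$-local $X$ will then follow from the same characterization applied universally, which yields $\exp_{MU_{(p)}}(\tau_{\leq k} X) \leq f_{MU_{(p)}}(k+1)$. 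Thus the target reduces to showing: the height of nonzero classes on the $E_\infty$-page of the $MU_{(p)}$-based Adams spectral sequence for $S^0_{(p)}$, in stems $\leq k - 1$, grows as $o(k)$.

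The main obstacle is this quantitative vanishing-line bound, which is essentially the content of \Cref{DHSthm} rephrased through the $MU_{(p)}$-based Adams spectral sequence. Here I would invoke the Hopkins--Smith vanishing-line theorem \cite{HS98}, itself a consequence of \Cref{DHSthm}: for every $n \geq 1$, at sufficiently late pages of the $MU_{(p)}$-based Adams spectral sequence for $S^0_{(p)}$ there is a vanishing line of slope $1/|v_n| = 1/(2(p^n - 1))$. Since $|v_n| \to \infty$, these slopes tend to $0$; hence for any $\epsilon > 0$ one can, by choosing $n$ with $1/|v_n| < \epsilon$ and passing to the corresponding late page, conclude $f_{MU_{(p)}}(k) < \epsilon k + O(1)$ for $k$ sufficiently large. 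Letting $\epsilon \to 0$ yields the required sublinearity, completing the reformulation.
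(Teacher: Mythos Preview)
Your overall strategy—reduce $f_{MU_{(p)}}(k)=o(k)$ to a vanishing-line statement for the Adams–Novikov spectral sequence and then invoke results of \cite{DHS88, HS98}—is the same as the paper's. However, there are two genuine gaps in the execution.

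First, the reduction to the spectral sequence for $S^0_{(p)}$ alone does not work. By \Cref{fRk}, the condition $f_{MU_{(p)}}(k) \leq m$ is equivalent to the vanishing of classes of $MU_{(p)}$-Adams filtration $\geq m$ in $\pi_i X$ for \emph{every} connective $p$-local $X$ and all $i \leq k-1$, not only for $X = S^0_{(p)}$. Bounding the Adams filtration of nonzero classes in $\pi_*(S^0_{(p)})$ gives only a \emph{lower} bound on $f_{MU_{(p)}}(k)$, whereas you need an upper bound. The paper addresses this by first taking direct sums to construct a single connective $p$-local spectrum $X$ in which, for every $k$, some nonzero class in $\pi_{k-1}X$ realizes the maximal filtration $f_{MU_{(p)}}(k)$; it then proves the vanishing line for that $X$.

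Second, the ``Hopkins–Smith vanishing-line theorem'' with slopes $1/|v_n|$ at late pages of the ANSS is not a result appearing in \cite{HS98}, and the paper does not argue this way. Instead, for a fixed $\epsilon > 0$ the paper uses that the class of $X$ whose $MU_{(p)}$-based Adams spectral sequence has a vanishing line of slope $\epsilon$ at some finite page is thick (\cite{HPSgeneric}); by \cite[Prop.~4.5]{DHS88} there is a finite torsion-free $p$-local complex $F$ such that $X \wedge F$ already has such a line at $E_2$; and then the thick subcategory theorem of \cite{HS98} places $X$ in the thick subcategory generated by $X \wedge F$. This is precisely where the nilpotence theorem enters, and it yields the desired vanishing line for the universal $X$ constructed above, hence $f_{MU_{(p)}}(k) \leq \epsilon k + O(1)$ for every $\epsilon > 0$.
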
 
\begin{proof} 
This will follow from the results of \cite{DHS88} as well as the general
vanishing line arguments of \cite{HS98}. 
Fix a connective $p$-local spectrum $X$ and 
$\epsilon > 0$. 
Taking direct sums, we can assume that $X$ is actually chosen so that it
attains the bound given by $f_{MU_{(p)}}$: that is, we can assume for each $k > 0$,
there exists a nonzero $\alpha_k \in \pi_{k-1} X$ of filtration $f_{MU_{(p)}}(k)$.

We claim that there exist $N, M$ such that 
in the $MU_{(p)}$-based Adams spectral sequence for $X$, we have
$E_N^{s,t} =0 $ if $s > \epsilon(t-s ) + M$. 
It follows from \cite{HPSgeneric} that the class of $X$ with this property is thick. 
By \cite[Prop. 4.5]{DHS88}, it follows that there exists a finite torsion-free
$p$-local spectrum $F$ such that 
$X \wedge F$ has this property (in fact, we can take $N = 2$). It follows that
$X$ has this property in view of the thick subcategory theorem of \cite{HS98}. 

Choose $N, M$ such that we have the vanishing line 
$E_N^{s,t} =0 $ if $s > \epsilon(t-s ) + M$. Then the Adams-Novikov spectral
sequence together with assumption that the $\alpha_k$ exist show that
$f_{MU_{(p)}}(k) \leq \epsilon k + O(1)$. Since $\epsilon$ was arbitrary, it follows
that $f_{MU_{(p)}}(k) = o(k)$ as $k \to \infty$.
\end{proof} 

\begin{remark} 
In \cite{Hopbirthday}, Hopkins explains this result in the following
(equivalent) manner: the Adams-Novikov spectral sequence (i.e., the spectral sequence based on the
cosimplicial object $\cb(MU_{(p)}) \wedge R$), which always converges for $R$
connective, has a ``vanishing curve'' of slope tending to zero  at $E_\infty$. 
That is, there is a function 
$t \mapsto \phi(t)$ with $\phi(t) = o(t)$ such that any 
element in $\pi_t (R)$ of filtration at least $\phi(t)$ vanishes. We can take
$\phi(t) = f_{MU_{(p)}}(t+1)$ in our notation.
Hopkins also raises the more precise question of the behavior of
$f_{MU_{(p)}}(t)$ as
$t \to \infty$, and suggests that $f_{MU_{(p)}}(t) \simeq t^{1/2}$. 
Hopkins actually works in the integral (rather than $p$-local setting). 
\end{remark}

We now explain the situation when $ R = H \mathbb{Z}_{(p)}$. Of course,
$H\mathbb{Z}_{(p)}$ is very
insufficient for detecting nilpotence. So we should expect $f_{H
\mathbb{Z}_{(p)}}(k)
\neq o(k)$. In fact, we can determine its behavior. 

Suppose $X$ is a connective $p$-local spectrum and suppose that $X$ is
$n$-truncated, i.e., $\pi_i X = 0$ if $i \notin [0, n]$. 
Then, by induction on $n$ and the Postnikov filtration, we find easily that
$\exp_{H\mathbb{Z}_{(p)}}(X) \leq n + 1$. 
One can do somewhat better. 
\begin{proposition} 
We have $f_{H\mathbb{Z}_{(p)}}(k) = \frac{k}{2p-2} + O(1)$.
\end{proposition}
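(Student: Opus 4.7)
The plan is to prove matching upper and lower bounds, both of the form $\frac{k}{2p-2} + O(1)$, using the link between nilpotence exponents and Adams filtrations from \Cref{fRk}.

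For the upper bound, the first step is to note that the canonical ring map $H\mathbb{Z}_{(p)} \to H\mathbb{F}_p$ induces a map of Adams towers $I_{H\mathbb{Z}_{(p)}}^{\wedge m} \to I_{H\mathbb{F}_p}^{\wedge m}$; consequently, the $H\mathbb{Z}_{(p)}$-based Adams filtration of any class is bounded above by its mod-$p$ Adams filtration. By \Cref{fRk}, it then suffices to bound the mod-$p$ Adams filtration of nontrivial classes in $\pi_n X$ for $X$ a connective $p$-local spectrum with finitely generated homology and $n \leq k-1$. Here I would invoke the classical Adams vanishing theorem: the $E_\infty$-page of the mod-$p$ Adams spectral sequence for the sphere lies in positive stems beneath a line of slope $\frac{1}{2p-2}$, so every nonzero class in stem $n\ge 1$ has filtration at most $\frac{n}{2p-2} + C$ for a universal constant $C$. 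This yields $f_{H\mathbb{Z}_{(p)}}(k) \leq \frac{k}{2p-2} + O(1)$.

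For the lower bound, I would exhibit explicit classes realizing the bound. At an odd prime, these are the classical Greek letter $\alpha$-family: let $M = S^0_{(p)}/p$ with its $v_1$-self-map $v_1 : \Sigma^{2p-2} M \to M$ and Bockstein $\partial : M \to \Sigma S^0_{(p)}$, and set
\[
\alpha_i : S^{i(2p-2)-1} \longrightarrow \Sigma^{i(2p-2)-1} M \xrightarrow{v_1^i} \Sigma^{-1} M \xrightarrow{\partial} S^0_{(p)}.
\]
A short calculation from the cofiber sequence $S^0_{(p)} \xrightarrow{p} S^0_{(p)} \to M$ shows that $H\mathbb{Z}_{(p)} \wedge M \simeq H\mathbb{F}_p$ has homotopy concentrated in degree zero; since $v_1$ shifts degree by $2p-2 \geq 2$, it is necessarily null after smashing with $H\mathbb{Z}_{(p)}$, so $v_1$ has $H\mathbb{Z}_{(p)}$-Adams filtration $\geq 1$. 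Multiplicativity of Adams filtration then gives $v_1^i$ Adams filtration $\geq i$, so $\alpha_i$ inherits filtration $\geq i$ in $\pi_{i(2p-2)-1} S^0_{(p)}$. Since $\alpha_i$ is nonzero (a standard chromatic fact), \Cref{fRk} gives $f_{H\mathbb{Z}_{(p)}}(i(2p-2)) \geq i+1$, which interpolates to $f_{H\mathbb{Z}_{(p)}}(k) \geq \frac{k}{2p-2} - O(1)$. At $p=2$ the Moore spectrum does not carry a $v_1$-self-map, but Adams's $v_1^4$-self-map on $S^0/2$ (and the associated image-of-$J$ classes) produces the required classes of filtration growing linearly in the stem.

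The main obstacle is the upper bound: the naive connectivity of $I = \tau_{\geq 1} S^0_{(p)}$ (whose first nonzero homotopy at odd primes lies in degree $2p-3$) only delivers the weaker slope $\frac{1}{2p-3}$ by direct connectivity estimates on $I^{\wedge m}$. Sharpening to the correct slope $\frac{1}{2p-2}$ is non-elementary and relies on the Adams vanishing line, a classical but nontrivial result on $\mathrm{Ext}$ over the Steenrod algebra.
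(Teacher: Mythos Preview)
Your argument is correct and the upper bound is essentially the paper's: pass from $H\mathbb{Z}_{(p)}$ to $H\mathbb{F}_p$ and invoke the Adams vanishing line (the paper cites \cite[Prop.~3.2--3.3]{Mtorsion} for exactly this). The lower bound, however, differs. You produce nonzero high-filtration classes in the sphere via the $\alpha$-family, which works but costs you the nontrivial fact that $\alpha_i \neq 0$ (an image-of-$J$ computation) and forces a separate treatment at $p=2$ with the $v_1^4$-self-map. The paper instead takes $X = ku/p$ and uses the element $v_1 \in \pi_{2p-2}(ku/p)$: since $\pi_*(H\mathbb{Z}_{(p)} \wedge ku/p)$ is concentrated in degree zero, $v_1$ has $H\mathbb{Z}_{(p)}$-Adams filtration $\geq 1$, and its non-nilpotence is immediate from the ring structure $\pi_*(ku/p) \cong \mathbb{F}_p[v_1]$. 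This gives $f_{H\mathbb{Z}_{(p)}}(n(2p-2)+1) \geq n+1$ directly from \Cref{fRk}, uniformly in $p$ and with no appeal to stable homotopy computations beyond the homotopy of $ku$. Your route buys you the satisfaction of witnessing the bound in the sphere itself; the paper's route is shorter and avoids the chromatic input.
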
 
\begin{proof} 
This is based on  the vanishing line in
the $H \mathbb{F}_p$-based Adams spectral sequence. Compare
\cite[Prop. 3.2--3.3]{Mtorsion}. 
The argument there shows that 
$\exp_{H\mathbb{Z}_{(p)}} ( \tau_{\leq k} S^0_{(p)}) \leq \frac{k}{2p-2} + O(1)$.
In fact, it shows the stronger claim that 
$\exp_{H\mathbb{F}_p} ( \tau_{[1, k]} S^0_{(p)}) \leq \frac{k}{2p-2} + O(1)$.

To see the converse, 
we consider the ring spectrum $ku/p$. The element $v_1 \in \pi_{2p-2}( ku/p )$
maps to zero in $\pi_{2p-2}( H \mathbb{Z}_{(p)}  \wedge (ku/p))
\subset \pi_{2p-2}( H \mathbb{F}_p\wedge (ku/p))$. 
However, $v_1$ is not nilpotent. It follows that 
$v_1^n \in \pi_{n(2p-2)} (ku/p)$ has $H\mathbb{Z}_{(p)}$-Adams filtration at least $n$. 
It follows from this 
and \Cref{fRk}
that $f_{H\mathbb{Z}_{(p)}}( n(2p-2) + 1) \geq n+1$. This proves the claim. 
\end{proof}

We can also ask intermediate questions. 
We have a whole family of ring spectra interpolating between $MU$ and $H
\mathbb{Z}_{(p)}$. Since we are working $p$-locally, it is easier to replace
$MU$ with $BP$. We then have the family of ring spectra $BP\left \langle
n\right\rangle$. 
Essentially by construction, $BP\left \langle n\right\rangle$ does not see
$v_{n+1}$-fold periodicities and higher; for instance, $BP\left \langle
n\right\rangle$
is annihilated by the Morava $K$-theories $K(m)$ for $m \geq n+1$. 
This leads to the following question:
\begin{question} 
For $R = BP\left \langle n\right\rangle$, do we have 
$$ \lim_{k \to \infty} \frac{1}{k}\exp_{BP\left \langle
n\right\rangle}( \tau_{\leq k} S^0_{(p)}) = \frac{1}{2(p^{n+1}-2)}?$$
We can also ask for the value of this limit for other intermediate connective
ring spectra, such as $\tau_{\leq n} S^0_{(p)}$ and the $X(n)$-spectra used in
the proof of the nilpotence theorem \cite{DHS88}. 
\end{question}

At $n = 1$, closely related questions have been studied. 
In \cite[Th. 1.1]{mahowaldbo}, Mahowald shows that the $\ko$-based Adams
spectral sequence for the sphere has a vanishing line at $E_\infty$ 
of slope $\frac{1}{5}$.
At odd primes, this has  been considered in the work of
Gonzalez \cite{Gonz}. 
In particular, in \cite{Gonz} it is shown that the $ku$-based Adams spectral
sequence for $S^0$ has a vanishing line at $E_2$ of slope $(p^ 2- p - 1)^{-1}$. 
We can recover the corresponding statement for the exponents as follows.
\begin{example} 
Let $n = 1$ and $p$ be an odd prime. 
Let $\ell$ be the $p$-adic Adams summand of $\widehat{ku}_p$. Then we have a
natural map
\[ \widehat{(S^0)}_p \to \mathrm{fib}( \ell \xrightarrow{\psi^l - 1} \Sigma^{2p-2} \ell ) , \]
which is an equivalence in degrees below degree $2p^2 -2p-2$, where the first
$\beta$-element $\beta_1$ occurs. In fact, the map 
$\widehat{(S^0)}_p \to \mathrm{fib}( \ell \xrightarrow{\psi^l - 1}
\Sigma^{2p-2} \ell )$ detects precisely the image of the $J$-homomorphism, and $\beta_1$ is the
first class in $p$-local stable homotopy which does not belong to the image of
$J$. 

Thus the $p$-completion $\tau_{< (2p^2  -2p-2)} \widehat{S^0}_p$ has exponent $\leq 2$ over $\ell$. 
It follows from this that 
$f_{ku}(2p^2 - 2p - 2) = f_{\ell}( 2p^2 - 2p - 2) \leq 2$. 
In fact, we can apply \Cref{fRk}: if $X$ is a connective $p$-local spectrum
with finitely generated homology and $i 
 < 2p^2 - 2p - 2
$, then any $\alpha \in \pi_i (X)$ of $ku$-filtration 
at least $2$ vanishes. This follows from 
$\exp_{\ell}\left(\tau_{< (2p^2  -2p-2)} \widehat{S^0}_p\right) \leq 2$ and
passage to $p$-completion everywhere. 
As a result, since $f_{ku}(\cdot)$ is subadditive, 
\(  \lim_{k \to \infty} f_{ku}(k) \leq \frac{1}{p^2 - p - 1}.  \)
\end{example} 

\subsection{The Hopkins-Ravenel smash product theorem}

In this subsection, we briefly discuss the role of nilpotence in the smash product theorem of
Hopkins-Ravenel. We refer to \cite{Ravenelnilp} for an account of this result.
See also the course notes \cite{Lchromatic}.

Fix a ring spectrum $E$.
\begin{definition}[Bousfield] 
A spectrum $X$ is \emph{$E$-local} if for all spectra $Y$ with $E \wedge Y $
contractible, we have $[Y, X]_* = 0$.
To any spectrum $X$, we have a universal  approximation $L_E X$
equipped with a map $X \to L_E X $
with the following two properties: 
\begin{enumerate}
\item $X \to L_E X $ becomes an equivalence after smashing with $E$. 
\item $L_E X$ is $E$-local.
\end{enumerate}
This is called the \emph{$E$-localization} of $X$.
\end{definition} 

Since $E$ is a ring spectrum, we see that if $X = E \wedge X'$ for any spectrum
$X'$, then $X$ is $E$-local. 
As $E$-local spectra form a thick subcategory of $\sp$, it follows that 
any $E$-nilpotent spectrum is also $E$-local. 

\begin{definition} 
We say that a localization $L_E$
is \emph{smashing} if for all spectra $X$, the natural map $L_E S^0 \wedge X
\to L_E X $ is an equivalence. This holds if and only if $L_E$ commutes with
arbitrary wedges (i.e., direct sums). 
\end{definition}

Suppose that $E$ is an $\e{1}$-ring spectrum. 
Then we can form the cobar construction $\cb(E) \in \fun(\Delta, \sp)$. 
\begin{proposition} 
Suppose the $\mathrm{Tot}$ tower of the cobar construction $\cb(E)$ is
quickly converging. Then: 
\begin{enumerate}
\item  The inverse limit of $\cb(E)$ is given by $L_E S^0$.
\item  $L_E : \sp \to \sp $ is a smashing localization.
\item The map $L_E S^0$ is $E$-nilpotent. 
\end{enumerate}
\end{proposition}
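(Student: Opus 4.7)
The plan is to set $T = \mathrm{Tot}(\cb(E))$, identify $T$ with $L_E S^0$, and then deduce (2) and (3) from this identification and the results already established in the previous subsections.

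First I would verify that $T$ satisfies the two properties characterizing $L_E S^0$: namely, $T$ is $E$-local, and the augmentation $S^0 \to T$ becomes an equivalence after smashing with $E$. For $E$-locality, each term $E^{\wedge(n+1)}$ of $\cb(E)$ is an $E$-module and hence $E$-local (mapping into an $E$-module from an $E$-acyclic spectrum is zero by the extension-of-scalars adjunction). Since $E$-local spectra are closed under limits in $\sp$, the totalization $T$ is $E$-local. For the $E$-equivalence, I would apply \Cref{quickconvpreserved} to the exact functor $- \wedge E : \sp \to \sp$: quick convergence of $\cb(E)$ yields $E \wedge T \simeq \mathrm{Tot}(E \wedge \cb(E))$. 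But the augmented cosimplicial object $E \wedge \cbaug(E)$ has an extra degeneracy (it is the base-change of $\cbaug(E)$ along $S^0 \to E$), so its totalization is $E$. This proves (1).

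For (3), I would apply \Cref{quickconvthicksubcat}: since $\cb(E)$ is quickly converging, $T = \mathrm{Tot}(\cb(E))$ lies in the thick subcategory of $\sp$ generated by the terms $\{E^{\wedge(n+1)}\}_{n \geq 0}$. Each such term is an $E$-module and hence lies in $\nil_E = \tht(E)$, so $T \in \nil_E$, proving that $L_E S^0$ is $E$-nilpotent. For (2), I would show that $L_E S^0 \wedge X \to L_E X$ is an equivalence for every spectrum $X$ by identifying both sides with $\mathrm{Tot}(\cb(E) \wedge X)$. The exact functor $- \wedge X : \sp \to \sp$ preserves quick convergence by \Cref{quickconvpreserved}(2), so $\cb(E) \wedge X$ is again quickly converging and $T \wedge X \simeq \mathrm{Tot}(\cb(E) \wedge X)$. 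On the other hand, running the $E$-locality and $E$-equivalence arguments from the previous paragraph with $X$ smashed in throughout identifies $\mathrm{Tot}(\cb(E) \wedge X)$ with $L_E X$.

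The main obstacle is really only the bookkeeping around quick convergence: the entire argument amounts to freely commuting $\mathrm{Tot}$ past the exact functors $- \wedge E$ and $- \wedge X$, which is exactly what \Cref{quickconvpreserved} allows us to do. Once that commutation is available, the extra degeneracy on $E \wedge \cbaug(E)$ does all of the remaining work, and the three conclusions follow directly.
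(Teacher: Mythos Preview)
Your proposal is correct and follows essentially the same route as the paper: establish (1) by showing $\mathrm{Tot}(\cb(E))$ is $E$-local (as a limit of $E$-modules) and that $S^0 \to \mathrm{Tot}(\cb(E))$ is an $E$-equivalence via quick convergence and the extra degeneracy, then deduce (3) from quick convergence via \Cref{quickconvthicksubcat}. The paper's proof is terser and leaves (2) implicit; your argument for (2) via $\mathrm{Tot}(\cb(E)\wedge X)$ is a perfectly good way to fill this in, though one can also observe that (2) follows immediately from (1) and (3): since $L_E S^0$ is $E$-nilpotent, $L_E S^0 \wedge X$ is $E$-nilpotent and hence $E$-local for every $X$, and $X \to L_E S^0 \wedge X$ is an $E$-equivalence because $S^0 \to L_E S^0$ is.
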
 
\begin{proof} 
Since $\cb(E)$ is quickly converging, it follows that
\[ E \wedge \mathrm{Tot}(\cb(E)) \simeq \mathrm{Tot}(E \wedge \cb(E)) \simeq E  \]
as $E \wedge \cb(E)$ admits a splitting. 
Therefore, the map $S^0 \to \mathrm{Tot}(\cb(E))$ becomes an equivalence after
smashing with $E$. Since $\mathrm{Tot}(\cb(E))$ is clearly $E$-local (as an
inverse limit of $E$-local objects), (1) follows. 
Since the tower is quickly converging, 
it follows that the limit $L_E S^0$  is $E$-nilpotent. 
\end{proof}

Let $E_n$ denote \emph{Morava $E$-theory.} We refer to \cite{rezknotes} for a
treatment of $E_n$ and a proof that it is an $\e{1}$-ring spectrum (in fact,
it is $\e{\infty}$ by the Goerss-Hopkins-Miller theorem). 
We then have the following fundamental result. 

\begin{theorem}[Hopkins-Ravenel] 
The cobar construction $\cb(E_n)$ is quickly converging. 
Therefore, $L_n$ is a smashing localization.
\end{theorem}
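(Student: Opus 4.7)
The plan is to verify the hypothesis of the preceding proposition, namely that the $\mathrm{Tot}$ tower of $\cb(E_n)$ is quickly converging; parts (1)--(3) of that proposition then immediately give smashness of $L_n$ together with the identification $L_n S^0 \simeq \mathrm{Tot}(\cb(E_n))$.

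The strategy is to establish a horizontal vanishing line at a finite page of the $E_n$-based Adams spectral sequence for $S^0_{(p)}$. From such a vanishing line, quick convergence of the $\mathrm{Tot}$ tower follows by the standard tower-level arguments (\Cref{quickconvss} and its converse, which is implicit in the fact that a sandwich of $\mathrm{Tot}$ between partial totalizations produces nilpotent fibers above the vanishing line). Thus the whole theorem reduces to producing integers $N \geq 2$ and $h \geq 0$ with $E_N^{s,t} = 0$ for $s > h$ in the $E_n$-Adams spectral sequence for $S^0_{(p)}$.

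To produce such a vanishing line, I would combine the thick subcategory theorem of \cite{HS98} with the generic vanishing line theorem of \cite{HPSgeneric}, which together ensure that the class of finite $p$-local spectra whose $E_n$-Adams spectral sequence has a finite-page horizontal vanishing line is a thick subcategory. I would populate this subcategory by choosing a finite type $n+1$ complex $F$ equipped with a $v_{n+1}$-self-map (supplied by the periodicity theorem). Because $K(m) \wedge F = 0$ for $m \leq n$, one has $L_n F = 0$, and the $v_{n+1}$-self-map allows one to show directly that $\cb(E_n) \wedge F$ is in fact $\wedge$-nilpotent with a sharp, computable exponent, yielding the desired vanishing line over $F$.

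The principal obstacle is passing from $F$ back to $S^0$: since $F$ is of type $n+1$, it only generates the thick subcategory of type-$\geq n+1$ complexes, which does not contain the sphere. I would address this by chromatic induction on $n$, with trivial base case $n = 0$ (where $L_0 = L_{\mathbb{Q}}$ is obviously smashing). For the inductive step, I would use the chromatic fracture square expressing $L_n S^0$ as the pullback of $L_{n-1} S^0$ and $L_{K(n)} S^0$ over $L_{n-1} L_{K(n)} S^0$: by induction $L_{n-1}$ is smashing, while the $K(n)$-local piece $L_{K(n)} S^0$ can be accessed as a telescope of $v_n$-self-maps on finite type $n$ complexes whose cofibers are of type $n+1$ and therefore carry the vanishing line already in hand. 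Careful bookkeeping of Adams filtrations across these cofiber sequences and the pullback square is the technical heart of Hopkins-Ravenel, but once accomplished it transports the finite-page vanishing line to $S^0$, whereupon the preceding proposition completes the proof.
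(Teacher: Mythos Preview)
Your reduction to a horizontal vanishing line is broadly in the right spirit, but there are two genuine gaps.

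First, a vanishing line in the $E_n$-Adams spectral sequence for $S^0$ alone does \emph{not} yield quick convergence of the $\mathrm{Tot}$ tower. \Cref{quickconvss} goes the other way, and there is no cheap converse ``implicit in a sandwich of partial totalizations'': that only tells you about $\pi_*$ of the tower, not about the tower as an object of $\tow(\sp)$. The paper makes this explicit by stating a separate proposition: one needs a \emph{uniform} vanishing line (same $N$ and $h$) for $\cb(E_n) \wedge Y$ across \emph{all} finite spectra $Y$, and then one invokes the special feature of $\sp$ that a composite of two phantom maps is zero. Without both the uniformity and the phantom-map input, the passage from vanishing line to quick convergence fails.

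Second, your chromatic induction is circular. Writing $L_{K(n)} S^0$ as ``a telescope of $v_n$-self-maps on finite type $n$ complexes'' is exactly the telescope conjecture, which cannot be assumed (and is now known to be false for $n \geq 2$). More structurally, the chromatic fracture square and the basic properties of $L_{K(n)}$ are themselves typically developed \emph{using} the smash product theorem, so they are not available as inputs. The actual Hopkins--Ravenel argument does not proceed by induction on $n$ through fracture squares; it establishes the uniform vanishing line directly via the nilpotence technology (as in \cite{Ravenelnilp} and \cite{Lchromatic}), and only then deduces quick convergence by the phantom-map proposition above.
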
 

By \Cref{quickconvss}, the Hopkins-Ravenel theorem implies that the $E_n$-based Adams spectral 
sequence for any $L_n S^0$-module  has a horizontal vanishing line at some finite stage. Actually, the
proof of the Hopkins-Ravenel theorem is based on proving this uniform vanishing
line. 
Then one uses the following crucial result. We refer to \cite[Lecture 30]{Lchromatic} for an exposition.

\begin{proposition} 
Suppose $X^\bullet$ is a cosimplicial object. Suppose that there exists $N, h$
such that for every finite spectrum $Y$, the BKSS for $\pi_* (
\mathrm{Tot}(X^\bullet  \wedge Y))$
has the property that $E_{N}^{s,t} = 0$ for $s > N$. 
Then $X^\bullet$ is quickly converging. 
\end{proposition}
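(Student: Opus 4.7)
The plan is to show the cofiber tower $\{C_n := \mathrm{cof}(Y_\infty \to Y_n)\}$ is nilpotent, where $Y_n := \mathrm{Tot}_n(X^\bullet)$ and $Y_\infty := \mathrm{Tot}(X^\bullet)$. Once that is done, the levelwise cofiber sequence of towers
\[ \{Y_\infty\}_{\mathrm{const}} \to \{Y_n\} \to \{C_n\} \]
exhibits $\{Y_n\}$ as lying in the thick subcategory generated by constant and nilpotent towers, which by definition is the class of quickly converging towers.

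I would first translate the vanishing-line hypothesis to the level of homotopy groups. For every finite spectrum $F$, collapse at page $E_N$ with horizontal vanishing line of height $h$ forces the BKSS for $\pi_*(Y_\infty \wedge F)$ to be strongly convergent, since the corresponding derived $\lim^1$ obstructions vanish above the line. Hence $\pi_*(Y_\infty \wedge F) \hookrightarrow \pi_*(Y_n \wedge F)$ for $n \geq h$, and $\pi_*(C_n \wedge F)$ is the cokernel, capturing classes of partial-Tot filtration exceeding $h$. A careful chase through the BKSS differentials then produces a uniform integer $M$, depending only on $N$ and $h$, such that the induced map $\pi_*(C_{n+M} \wedge F) \to \pi_*(C_n \wedge F)$ vanishes for every $n$ and every finite $F$. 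Intuitively, $M$ absorbs the $N$ pages of differentials: a class of BKSS filtration above $h$ detected at level $n+M$ is killed before it can map nontrivially into $C_n$.

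The hard step will be promoting this homotopy-group vanishing to an actual spectrum-level nullhomotopy $C_{n+M} \to C_n$. As emphasized in the earlier discussion of phantom maps, a spectrum map that vanishes on $\pi_*(- \wedge F)$ for every finite $F$ need not itself be null. The uniformity of the bound $M$ across all finite $F$ is precisely what compensates for this, following the techniques in \cite{HPSgeneric}: one argues by cellular induction along a finite spectrum probing $C_n$, patching the cell-wise nullhomotopies coherently using the $F$-independence of $M$. Once $\{C_n\}$ is shown to be nilpotent in this way, the reduction in the first paragraph concludes that $\{Y_n\}$ is quickly converging.
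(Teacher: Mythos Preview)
Your overall architecture is correct: reduce to showing the cofiber tower $\{C_n\}$ is nilpotent, and extract from the uniform vanishing line an integer $M$ such that each transition map $C_{n+M}\to C_n$ becomes zero on $\pi_*(-\wedge F)$ for every finite spectrum $F$. That last condition, by Spanier--Whitehead duality, says precisely that $C_{n+M}\to C_n$ is a \emph{phantom} map.

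The gap is in your ``hard step.'' You propose to upgrade the phantom map to a genuine nullhomotopy by ``cellular induction along a finite spectrum probing $C_n$,'' using the $F$-independence of $M$. This does not work: $C_n$ is not finite, and there is no finite cell structure along which to induct. The uniformity of $M$ only tells you that the map is phantom; it does not let you glue the pointwise nullhomotopies $F\to C_{n+M}\to C_n$ into a global one. Indeed, a single phantom map can be highly nontrivial, and no amount of uniformity over finite test objects will kill it on its own.

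The correct resolution is the one flagged in the remark immediately following the proposition in the paper: in the $\infty$-category of spectra, the composite of two phantom maps is null. (This is a consequence of Brown representability for homology theories on spectra; see the references to Christensen--Strickland and Neeman earlier in the paper.) Once you know each $C_{n+M}\to C_n$ is phantom, the composite
\[
C_{n+2M}\longrightarrow C_{n+M}\longrightarrow C_n
\]
is a composite of two phantoms and hence null. This gives nilpotence of $\{C_n\}$ with step $2M$, and your opening paragraph then finishes the argument. So the fix is short, but the ingredient you invoked is the wrong one: replace the cellular-induction claim with the phantom-squared-is-zero fact.
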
 

\begin{remark} 
This result is not totally general: it relies on the fact 
that the composite of two \emph{phantom} maps of spectra is zero. 
\end{remark}

\section{Nilpotence and group actions}

In this section, we discuss some of the relationships between nilpotence and
finite group actions, in particular the $\sF$-nilpotence theory of
\cite{MNN15i, MNN15ii}.

\subsection{Group actions}
\label{subsec:nilpaction}
Let $\mathcal{C}$ be a presentably symmetric monoidal stable $\infty$-category. 
Fix a finite group $G$.
We consider $\infty$-category $\fun(BG, \mathcal{C})$ of objects in
$\mathcal{C}$
equipped with a $G$-action. With the tensor product inherited from the
one on $\mathcal{C}$, this is also a presentably 
symmetric monoidal stable $\infty$-category.

\newcommand{\triv}{\mathrm{triv}}
\begin{cons}
We have the 
homotopy fixed point functor 
\[ \fun(BG, \mathcal{C}) \to \mathcal{C}, \quad X \mapsto X^{hG},  \]
which is the (lax symmetric monoidal) right adjoint to the symmetric
monoidal left adjoint functor $\triv: \mathcal{C} \to \fun(BG,
\mathcal{C})$ which equips an object with the trivial action. 

We also have the homotopy orbits functor
\[ \fun(BG, \mathcal{C}) \to \mathcal{C}, \quad X \mapsto X_{hG},  \]
which is the left adjoint to 
$\triv$.
\end{cons}

In general, the homotopy fixed point functor is \emph{not}  a finite homotopy
limit, because $BG$ cannot be modeled by a finite simplicial set. 
This is a basic difference between algebra and homotopy theory: in algebra,
forming the fixed points for a finite group action is a finite inverse limit. 
By contrast, in homotopy theory, the homotopy fixed points $X^{hG}$ can be much
bigger than $X$. (Similarly, $X_{hG}$ is not a finite homotopy colimit.)

\begin{cons}
For any $X \in \fun(BG, \mathcal{C})$, we have a natural transformation 
\[ N_X: X_{hG} \to X^{hG}  \]
called the \textbf{norm map.} We refer to  \cite[Sec. 2.1]{DAGrat} for the construction
in this setting,
though it goes back 
to work of Greenlees-May \cite{GM95}.
The cofiber of the norm map is called the \textbf{Tate construction} $X^{tG}$. 
\end{cons}

%

In this subsection, we will start by describing an instance where  homotopy
fixed points behave more like a finite limit.
\begin{definition} 
\label{dualG}
We let $\mathbb{D}G_+ \in \fun(BG, \sp)$ be the Spanier-Whitehead dual to the
space $G_+$, equipped with the $G$-action by translation. 

The natural symmetric monoidal functor $\sp \to \mathcal{C}$ 
yields a symmetric monoidal functor $\fun(BG, \sp) \to \fun(BG, \mathcal{C})$
and we will write $\mathbb{D}G_+$ also denote the image. 
We have $\mathbb{D}G_+ \in
\clg(\fun(BG, \mathcal{C}))$, i.e., it is naturally a commutative algebra
object.
\end{definition}

\newcommand{\coind}{\mathrm{CoInd}}
\newcommand{\res}{\mathrm{Res}}
\begin{definition} 
\label{Tnil}
We also have a forgetful functor $\res: \fun(BG, \mathcal{C}) \to \mathcal{C}$ which
remembers the underlying object, and this functor has a biadjoint
\[ \coind: \mathcal{C} \to \fun(BG, \mathcal{C}), \quad X \mapsto \prod_{G} X,  \]
called \emph{coinduction.} Given an object $X$ of $\mathcal{C}$, this is the
\emph{indexed} product of $G$ copies of $X$, with $G$-action permuting the
factors. 
\end{definition} 

We will need some basic facts about $\mathbb{D}G_+$: 
\begin{enumerate}
\item For any $X \in \fun(BG, \mathcal{C})$, the object $X \wedge \mathbb{D}G_+ \in
\fun(BG, \mathcal{C})$ has the
property that  
$(X \wedge \mathbb{D}G_+)^{hG} \simeq X$.
In fact, $X \wedge \mathbb{D}G_+ \simeq \coind ( \res X)$,
which easily implies the description
of the homotopy fixed points.
\item  
Since $\coind $ is also 
the \emph{left} adjoint to $\res$, it follows 
that $$\left(X \wedge \mathbb{D}G_+\right)_{hG} \simeq 
\left( \bigvee_G X\right)_{hG} \simeq X,$$
so that the homotopy orbits are also given by $X$. In fact, the Tate
construction is contractible in this case and the norm map is an equivalence. 
\item The $\infty$-category of $\mathbb{D}G_+$-modules in 
$\fun(BG, \mathcal{C})$ is equivalent to $\mathcal{C}$ itself. This follows as
in \cite[Sec. 5.3]{MNN15i}.  
\end{enumerate}

We can now make the main definition of this subsection.

\begin{definition} 
We will say that  an object $X \in \fun(BG, \mathcal{C})$ is \textbf{nilpotent}
if it is $\mathbb{D}G_+$-nilpotent.
\end{definition}

\begin{proposition} 
When $X \in \fun(BG, \mathcal{C})$ is nilpotent, then $X^{tG} =0 $. 
\end{proposition}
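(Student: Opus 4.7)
The plan is to show that the full subcategory
\[ \mathcal{K} = \left\{ X \in \fun(BG, \mathcal{C}) : X^{tG} = 0 \right\} \]
is a thick $\otimes$-ideal containing every object of the form $Y \otimes \mathbb{D}G_+$, and then to invoke the fact that $\nil_{\mathbb{D}G_+} = \tht(\mathbb{D}G_+)$ is the smallest such thick $\otimes$-ideal. Since $X$ is assumed nilpotent, it lies in $\mathcal{K}$, giving $X^{tG} = 0$.

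First I would verify that $(-)^{tG}: \fun(BG, \mathcal{C}) \to \mathcal{C}$ is an exact functor. Homotopy orbits and homotopy fixed points are both exact (as left and right adjoints between stable $\infty$-categories), and the Tate construction is by definition the cofiber of the natural transformation $N: (-)_{hG} \to (-)^{hG}$ between these two exact functors; so it is itself exact. It follows formally that $\mathcal{K}$ is closed under cofibers, shifts, and retracts of zero objects, hence is a thick subcategory of $\fun(BG, \mathcal{C})$.

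Next I would observe that $\mathcal{K}$ contains $Y \otimes \mathbb{D}G_+$ for every $Y \in \fun(BG, \mathcal{C})$. This is exactly the content of the three bullet points recorded just before \Cref{Tnil}: for such $Y \otimes \mathbb{D}G_+$, both $(Y \otimes \mathbb{D}G_+)_{hG}$ and $(Y \otimes \mathbb{D}G_+)^{hG}$ are canonically identified with $Y$ (via the coinduction description and its self-biadjunction), and the norm map is explicitly an equivalence under these identifications. Therefore the Tate construction vanishes on all such objects.

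Finally, by definition $\nil_{\mathbb{D}G_+} = \th\bigl(\{Y \otimes \mathbb{D}G_+\}_{Y \in \fun(BG, \mathcal{C})}\bigr)$, so any thick subcategory containing every $Y \otimes \mathbb{D}G_+$ automatically contains $\nil_{\mathbb{D}G_+}$. Combining with the previous two steps gives $\nil_{\mathbb{D}G_+} \subset \mathcal{K}$, and since $X$ is nilpotent, $X^{tG} = 0$ as claimed. The only real point requiring care is the exactness of the Tate construction (equivalently, the naturality of the norm map as a natural transformation of exact functors); everything else is a direct application of the thick-ideal machinery from Section 2 to the class of objects on which Tate vanishes.
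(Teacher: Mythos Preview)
Your proof is correct and is exactly the ``evident thick subcategory argument'' the paper invokes: the Tate construction is exact (being the cofiber of a natural transformation between the exact functors $(-)_{hG}$ and $(-)^{hG}$), it annihilates every $Y \otimes \mathbb{D}G_+$, and hence it annihilates the thick subcategory they generate, namely $\nil_{\mathbb{D}G_+}$. One minor slip: in your opening you say you will show $\mathcal{K}$ is a thick $\otimes$-ideal, but you only verify (and only need) that it is a thick subcategory, which suffices since $\nil_{\mathbb{D}G_+} = \th(\{Y \otimes \mathbb{D}G_+\}_Y)$.
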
 
\begin{proof} 
The Tate construction is exact 
and it annihilates any object of the form $Y \wedge \mathbb{D}G_+$. Therefore,
it annihilates $X$ by the evident thick subcategory argument.
\end{proof} 

We have the following basic result. 
All of this is as in Section 2. In the case $\mathcal{C} = \sp$, then we can
identify the homotopy fixed point spectral sequence 
with the $\mathbb{D}G_+$-based Adams spectral sequence in $\fun(BG, \sp)$. 
\begin{proposition} 
When $X \in \fun(BG, \mathcal{C})$ is nilpotent, the following happen: 
\begin{enumerate}
\item  
$X^{hG}$ belongs to the thick
subcategory of $\mathcal{C}$ generated by $X$.
\item
If $\mathcal{C} = \sp$, 
the homotopy fixed point spectral sequence for 
$X^{hG}$, $H^*(G; \pi_* X) \implies \pi_* X^{hG}$, collapses with a horizontal
vanishing line at a finite stage.
\item Any exact functor $\mathcal{C} \to \mathcal{C}$ induces a functor $\fun(BG,
\mathcal{C}) \to \fun(BG, \mathcal{C})$ which preserves nilpotent objects. 
\end{enumerate}
\end{proposition}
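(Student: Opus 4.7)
The plan is to reduce all three parts to the characterization of $\mathbb{D}G_+$-nilpotence in terms of quick convergence of the cobar construction (the proposition preceding the ``Exponents of nilpotence'' subsection), combined with the projection-formula identity $Y \otimes \mathbb{D}G_+ \simeq \coind(\res Y)$ in $\fun(BG, \mathcal{C})$ already recorded before \Cref{dualG} and in \Cref{Tnil}. This identity immediately gives $(Y \otimes \mathbb{D}G_+)^{hG} \simeq \res Y$, because $\coind$ is right adjoint to $\res$ while $(-)^{hG}$ is right adjoint to $\triv$, so both are computed by the same adjunction.

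For (1), I apply the exact functor $(-)^{hG} : \fun(BG, \mathcal{C}) \to \mathcal{C}$ to the cosimplicial object $X \otimes \cb(\mathbb{D}G_+)$, which by nilpotence of $X$ is quickly converging with totalization $X$. By \Cref{quickconvpreserved}, the image is a quickly converging cosimplicial object in $\mathcal{C}$ with totalization $X^{hG}$, and by \Cref{quickconvthicksubcat} this totalization lies in the thick subcategory generated by the individual terms $(X \otimes \mathbb{D}G_+^{\otimes(n+1)})^{hG}$. Regrouping $X \otimes \mathbb{D}G_+^{\otimes(n+1)} \simeq (X \otimes \mathbb{D}G_+^{\otimes n}) \otimes \mathbb{D}G_+$ and applying the identity above gives $(X \otimes \mathbb{D}G_+^{\otimes(n+1)})^{hG} \simeq \res(X \otimes \mathbb{D}G_+^{\otimes n})$, which is a finite direct sum of copies of $\res X$ since $\res \mathbb{D}G_+$ is a finite sum of units in $\mathcal{C}$. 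These terms all lie in the thick subcategory of $\mathcal{C}$ generated by $X$, as required. Part (2) then follows in the case $\mathcal{C} = \sp$ by applying \Cref{quickconvss} to the same quickly converging cosimplicial spectrum $(X \otimes \cb(\mathbb{D}G_+))^{hG}$, once one identifies the associated BKSS with the homotopy fixed point spectral sequence $H^*(G; \pi_* X) \implies \pi_* X^{hG}$; this comparison between two cosimplicial models of $EG$ is the one technical check I would spell out carefully, and it is where the main bookkeeping lies.

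For (3), let $\mathcal{T} \subset \fun(BG, \mathcal{C})$ denote the collection of objects $Y$ such that $F_*(Y)$ is $\mathbb{D}G_+$-nilpotent, where $F_*$ denotes the pointwise postcomposition functor induced by $F$. Since $F_*$ is exact and $\nil_{\mathbb{D}G_+}$ is thick, $\mathcal{T}$ is itself a thick subcategory; so it suffices to check that every generator $\mathbb{D}G_+ \otimes Z$ belongs to $\mathcal{T}$. Rewriting $\mathbb{D}G_+ \otimes Z \simeq \coind(\res Z)$ and using that any exact functor between stable $\infty$-categories preserves the finite biproduct $\coind = \prod_G$, we find $F_*(\mathbb{D}G_+ \otimes Z) \simeq \coind(F \res Z) \simeq \mathbb{D}G_+ \otimes \triv(F \res Z)$, which is manifestly $\mathbb{D}G_+$-nilpotent. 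This completes the reduction.
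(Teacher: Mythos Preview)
Your proposal is correct and follows essentially the same route as the paper. The paper's own ``proof'' is the single sentence ``All of this is as in Section 2,'' together with the remark that for $\mathcal{C}=\sp$ the homotopy fixed point spectral sequence is the $\mathbb{D}G_+$-based Adams spectral sequence; you have simply unpacked this reference by invoking the characterization of $A$-nilpotence via quick convergence of the cobar construction, then applying \Cref{quickconvpreserved}, \Cref{quickconvthicksubcat}, and \Cref{quickconvss}. The identification you flag as the main technical check---matching the $EG_\bullet$-model for $X^{hG}$ with the cobar construction on $\mathbb{D}G_+$ after applying $(-)^{hG}$---is precisely the identification the paper asserts without proof, so you are not missing anything the paper supplies. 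Your argument for (3) via the thick-subcategory argument and the commutation of $F_*$ with the finite biproduct $\coind$ is the natural way to fill in what the paper leaves implicit.
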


Going in reverse, we can also describe the Tate construction in the following manner. See also
\cite{KleinTate}, which uses a version of this as the characterization.
\begin{cons}
\label{tatedesc}
Let $X \in \fun(BG, \mathcal{C})$.
We can write $X \in \fun(BG, \mathcal{C})$ as an $\mathcal{I}$-indexed colimit
$X = \varinjlim X_i$ where $X_i \in \fun(BG, \mathcal{C})$ is nilpotent. 
For example, we can take $X \simeq (X \wedge \mathbb{D}G_+)_{hG}$ where
$\mathbb{D}G_+ \in \fun(BG, \fun(BG, \sp)) \simeq \fun(B(G \times G), \sp)$ via
the right and left multiplication of $G$ on itself. 
Then 
\[ X^{tG} = \mathrm{cofib}( \varinjlim_{\mathcal{I}} X_i^{hG} \to X^{hG} ).  \]
\end{cons}

We now observe that for an algebra object, Tate vanishing is actually
\emph{equivalent} to nilpotence. 

\begin{theorem} 
\label{tatevanishnilp}
Suppose the unit $\mathbf{1} \in \mathcal{C}$ is compact.
Let $R \in \fun(BG, \mathcal{C})$ be an algebra object. 
Then the following are equivalent: 
\begin{enumerate}
\item The Tate construction vanishes, i.e., $R^{tG} =0$. 
\item $R$ is $\mathbb{D}G_+$-nilpotent.
\end{enumerate}
\end{theorem}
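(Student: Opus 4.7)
The direction $(2) \Rightarrow (1)$ is the content of the immediately preceding proposition, so the task is the converse. Suppose $R^{tG} = 0$; the plan is to factor the unit $\eta: \mathbf{1} \to R$ of $\mathcal{E} := \fun(BG, \mathcal{C})$ through a $\mathbb{D}G_+$-nilpotent object, and then use the algebra structure on $R$ to deduce that $R$ itself is a retract of something nilpotent.

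First, using the standard bar model $EG_\bullet = G^{\bullet+1}$ together with the equivalence $EG_+ \simeq \mathbf{1}$ in $\mathcal{E}$, write $R \simeq |R_\bullet|$ with $R_n := R \otimes G_+^{n+1}$. Each $R_n$ lies in $\tht_{\mathcal{E}}(\mathbb{D}G_+)$: since $G_+$ is Spanier--Whitehead dual to $\mathbb{D}G_+$, the two generate the same thick $\otimes$-ideal of $\mathcal{E}$ (cf.\ the remark on dualizable generators earlier in Section~2). Now \Cref{tatedesc} identifies $R^{tG}$ with $\mathrm{cofib}\bigl(|R_\bullet^{hG}| \to R^{hG}\bigr)$, so the hypothesis $R^{tG} = 0$ yields an equivalence $|R_\bullet^{hG}| \xrightarrow{\sim} R^{hG}$ in $\mathcal{C}$.

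Pass to the adjoint $\eta^\sharp: \mathbf{1} \to R^{hG}$ of $\eta$ under the $(\triv,(-)^{hG})$-adjunction, and decompose the realization as a filtered colimit of finite skeleta, $R^{hG} \simeq \varinjlim_n |\mathrm{sk}_n R_\bullet^{hG}|$. Compactness of $\mathbf{1} \in \mathcal{C}$ then forces $\eta^\sharp$ to factor through $|\mathrm{sk}_n R_\bullet^{hG}|$ for some finite $n$. The key observation is that $(-)^{hG}: \mathcal{E} \to \mathcal{C}$ is a right adjoint between stable $\infty$-categories, hence exact, hence preserves finite colimits; therefore $|\mathrm{sk}_n R_\bullet^{hG}| \simeq W^{hG}$, where $W := |\mathrm{sk}_n R_\bullet| \in \mathcal{E}$ is a finite colimit of the nilpotent objects $\{R_i\}_{i \le n}$ and thus lies in $\tht_{\mathcal{E}}(\mathbb{D}G_+)$. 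By adjunction, the factorization $\mathbf{1} \to W^{hG} \to R^{hG}$ in $\mathcal{C}$ corresponds to a factorization $\mathbf{1} \to W \to R$ of $\eta$ in $\mathcal{E}$.

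Finally, write $\mathrm{id}_R$ as the standard composite $R \simeq \mathbf{1} \otimes R \xrightarrow{\eta \otimes \mathrm{id}} R \otimes R \xrightarrow{\mu} R$; substituting the factorization of $\eta$ exhibits $\mathrm{id}_R$ as passing through $W \otimes R$. Because $\tht_{\mathcal{E}}(\mathbb{D}G_+)$ is a $\otimes$-ideal, $W \otimes R$ is $\mathbb{D}G_+$-nilpotent, and so is the retract $R$. The main subtlety is the identification $|\mathrm{sk}_n R_\bullet^{hG}| \simeq |\mathrm{sk}_n R_\bullet|^{hG}$, which relies crucially on $(-)^{hG}$ being exact rather than merely continuous; everything else is bookkeeping combining compactness with the $(\triv,(-)^{hG})$-adjunction.
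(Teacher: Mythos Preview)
Your argument is correct and essentially the same as the paper's. Both proofs write $R$ as the filtered colimit $\varinjlim_n R \wedge \mathrm{sk}_n EG_+$ of nilpotent objects, use Tate vanishing together with compactness of $\mathbf{1}$ to lift the unit to a finite stage, and then invoke the algebra structure to exhibit $R$ as a retract of something nilpotent. Your object $W = |\mathrm{sk}_n R_\bullet|$ is exactly the paper's $R \wedge \mathrm{sk}_n EG_+$; the only cosmetic difference is that you conclude $R$ is a retract of $W \otimes R$ (via $\eta \otimes \mathrm{id}$ and multiplication) whereas the paper uses the $R$-module structure on $W$ directly to get $R$ as a retract of $W$ itself, but both conclusions lie in $\tht(\mathbb{D}G_+)$ and either suffices.
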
 
\begin{proof} 
We refer also to \cite[Cor. 10.6]{HHR} for a result in a similar flavor. 
For an abstract result including this (and others), see \cite[Th. 4.19]{MNN15i}. 

\newcommand{\sk}{\mathrm{sk}}
Let $EG_\bullet$ denote the usual simplicial model for the $G$-space $EG$, so
that $EG_n = G^{n+1}$. 
Let $\sk_n EG$ denote the $n$-skeleton of the geometric realization
$|EG_\bullet|$. Then in the category $\fun(BG, \mathcal{C})$, we have an
equivalence
\[ R \simeq \varinjlim_n R \wedge \sk_n EG_+.  \]
Since $R^{tG} = 0$, it follows that we have an equivalence of $R^{hG}$-modules
\[ R^{hG} \simeq\varinjlim_n (R \wedge \sk_n EG_+)^{hG},    \]
and the unit $1 \in \pi_0 R^{hG}$ belongs to the image of the natural map $(R
\wedge \sk_n EG_+)^{hG} \to R^{hG}$ for some $n$ (as $\mathbf{1}$ is compact). 
It follows from this that in $\fun(BG, \mathcal{C})$, $R$ is a retract of $R
\wedge \sk_n EG_+$, which is clearly nilpotent. 
\end{proof}

\begin{example} 
Let $A \to B$ be a faithful $G$-Galois extension, so that $B \in \fun(BG,
\md(A))$. Then $B$ is nilpotent. In fact, we know that $B^{tG} = 0$ \cite[Prop.
6.3.3]{Rog08} so we can apply \Cref{tatevanishnilp}. 
Alternatively, $B \otimes_A B \simeq \prod_G B$ is clearly nilpotent, and then
one can argue that $B$ is nilpotent since $B$ is descendable as an
$A$-algebra by \Cref{galisdesc}. 
\end{example}

\begin{example} 
Suppose multiplication by $|G|$ is an isomorphism on every object in
$\mathcal{C}$ (equivalently, $\pi_* \mathbf{1}$ is a
$\mathbb{Z}[1/|G|]$-algebra). Then Tate constructions in $\mathcal{C}$ vanish. 
\end{example}

We will now study the case $G = C_p$ in some detail. In certain situations, we
will be able to give another description of the $\infty$-category $\fun(BC_p,
\mathcal{C})$ and of what nilpotence entails.

\begin{definition} 
Let $V$ be the one-dimensional complex representation of $C_p$ (choosing a
primitive $p$th root of unity) and let $S^V$ denote its one-point
compactification. 
We have a natural map of $C_p$-spaces $e: S^0 \to S^V$ including the points at $0$ and
$\infty$. We
will use the same notation 
for the associated map in $\fun(BC_p, \sp)$ obtained by taking suspension
spectra.
We write $e^n$ for the induced $n$-fold smash power $S^0 \to S^{nV}$.
\end{definition} 

We now have the following basic result about nilpotence. In various
forms, this argument is crucial to all aspects of this theory. It appears, for
example, in the work of Carlson \cite{Carlson} on Quillen stratification (to be
discussed further in the next section), and in a general form it appears in the
derived induction and restriction theory of \cite{MNN15ii}. 

\begin{proposition} 
Let $X \in \fun(BC_p, \mathcal{C})$. 
Then $X$ is 
nilpotent if and only if $X \wedge e^n: X \to X \wedge S^{nV}$ is nullhomotopic
in $\fun(BC_p, \mathcal{C})$ for some $n \gg 0$.
\end{proposition}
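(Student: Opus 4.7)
The plan is to exploit the standard cofiber sequence
\[ S(nV)_+ \to S^0 \to S^{nV} \]
in $\fun(BC_p, \sp)$, transferred to $\fun(BC_p, \mathcal{C})$ along the canonical symmetric monoidal functor $\sp \to \mathcal{C}$. The key geometric input is that $S(nV) \simeq S^{2n-1}$ carries a \emph{free} $C_p$-action (no nontrivial element of $C_p$ fixes a nonzero vector in $V \cong \mathbb{C}$), so it admits a finite free $C_p$-CW structure; hence $S(nV)_+$ is built from finitely many cells of the form $C_{p+} \wedge S^k$ and lies in $\tht(\mathbb{D}C_{p+}) = \nil_{\mathbb{D}C_{p+}}$.

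For the direction $(\Leftarrow)$: if $X \wedge e^n \simeq 0$, then smashing the cofiber sequence above with $X$ yields a cofiber sequence $X \wedge S(nV)_+ \to X \xrightarrow{X \wedge e^n} X \wedge S^{nV}$ whose last map is null. The sequence therefore splits, and $X$ is a retract of $X \wedge S(nV)_+$. Since $\nil_{\mathbb{D}C_{p+}}$ is a thick $\otimes$-ideal, $X \wedge S(nV)_+$ belongs to it, and hence so does $X$.

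For $(\Rightarrow)$: set $\mathcal{S} = \{\coind(Z) : Z \in \mathcal{C}\}$, which is closed under suspensions and direct sums and satisfies $\th(\mathcal{S}) = \tht(\mathbb{D}C_{p+})$ (using $\mathbb{D}C_{p+} \wedge Y \simeq \coind(\res Y)$). The key claim is that $e : \mathbf{1} \to S^V$ is $\mathcal{S}$-zero. Using that for finite $G$ the functor $\coind$ is also \emph{left} adjoint to $\res$, we have $[\coind(Z), Y]_* \cong [Z, \res(Y)]_*$, so $\mathcal{S}$-zeroness of $e$ is equivalent to $\res(e) = 0$ in $\mathcal{C}$. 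But $\res(e)$ is the image under $\sp \to \mathcal{C}$ of the underlying map $S^0 \to S^2$ of $e$ in $\sp$, which is null because $\pi_0(S^2) = 0$. The same reasoning shows each $X \wedge e \wedge S^{kV}$ is $\mathcal{S}$-zero, so $X \wedge e^n : X \to X \wedge S^{nV}$ is a composite of $n$ composable $\mathcal{S}$-zero maps. If $X$ has nilpotence exponent $\leq n$, i.e.\ $X \in \th(\mathcal{S})_n$, the composition principle for $\mathcal{S}$-zero maps stated after Construction 2.3, applied with $A' = X$ and tracing $\mathrm{id}_X$, forces $X \wedge e^n$ to be null. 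The main point to handle with care is this identification of $e$ as $\mathcal{S}$-zero via the $(\coind \dashv \res)$ adjunction; once it is in place the rest is a formal application of the filtration on $\tht$, and the argument in fact shows that $n$ may be taken equal to the nilpotence exponent of $X$.
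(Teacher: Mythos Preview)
Your proof is correct and follows essentially the same route as the paper. The $(\Leftarrow)$ direction is identical. For $(\Rightarrow)$, the paper verifies the key nullhomotopy by observing directly that $e \wedge \mathbb{D}C_{p+}$ is null at the level of $C_p$-spaces (using $\mathbb{D}C_{p+} \simeq C_{p+}$), whereas you verify the equivalent statement $\res(e)=0$ via the $(\coind \dashv \res)$ adjunction and $\pi_0(S^2)=0$; the paper then invokes a terse ``thick subcategory argument'' which you spell out explicitly through the $\mathcal{S}$-zero composition principle, and your version has the added bonus of giving the sharp bound $n = \exp_{\mathbb{D}C_{p+}}(X)$.
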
 
\begin{proof} 
Note first that $e \wedge \mathbb{D}G_+$ is nullhomotopic. This can be seen
using the self-duality $\mathbb{D}G_+ \simeq G_+$ and the nullhomotopy is at
the level of $G$-spaces itself. 
It follows that $e \wedge \mathbb{D}G_+ \wedge Y$ is nullhomotopic for any $Y
\in \fun(BC_p, \mathcal{C})$, and it follows by a thick subcategory 
argument that if $X \in \fun(BC_p, \mathcal{C})$ is nilpotent, then $e^n \wedge
X$ is null for some $n$.

Conversely, we 
let $S(nV)_+$ denote the unit sphere in the $C_p$-representation $nV =
V^{\oplus n}$ and consider the cofiber sequence
\[ S(nV)_+ \to  S^0 \stackrel{e^n}{ \to } S^{nV}  \]
in $\fun(BC_p, \sp)$, called the \emph{Euler sequence.}
If $X \wedge e^n$ is nullhomotopic, it follows that $X$ is a retract of $X
\wedge S(nV)_+$ in $\fun(BC_p, \mathcal{C})$. Now one sees that this is
nilpotent using the fact that $S(nV)_+$ admits a finite $C_p$-cell decomposition
with free cells. 
\end{proof}

\begin{definition} 
Let $R$ be an $\e{\infty}$-ring.
$R$ is \textbf{complex-oriented} if there exists a map of $\e{1}$-rings $MU \to R$. 
In this case, we have an identification\footnote{Stated another way, if
$BGL_1(R)$ denotes the 
classifying space of rank $1$ $R$-modules, then the composite
\( BC_p \to BGL_1(R)  \) classifying the representation sphere is
nullhomotopic. We refer to \cite{ABGHR1} for a detailed treatment.}
\[ R \wedge S^V  \simeq \Sigma^2 R  \in \fun(BC_p, \md(R)),  \]
so that the map $e$ becomes a map $ R \to \Sigma^2 R $ in $\fun(BC_p, \md(R))$,
which is classified by an element $\beta \in R^{2}(BC_p)$.
\end{definition} 

In this case, we can formulate an equivalence of categories that further
illuminates the condition of nilpotence of a $C_p$-group action. 
We first recall the setup; see also the discussion of
``unipotence'' in \cite{MNN15i}, of which this is another form. 
Let $R$ be an $\e{\infty}$-ring spectrum. Then for any finite group $G$, we have a natural adjunction
of presentably symmetric monoidal stable $\infty$-categories
\[ \md( C^*(BG; R)) \rightleftarrows \fun(BG, \md(R)),  \]
where the right adjoint sends $X \in \fun(BG, \md(R))$ to $X^{hG}$. 
This adjunction is generally far from an equivalence; nonetheless, sometimes it
can come close.

\begin{theorem} 
\label{Cpunipotence}
Suppose $R$ is an  complex-oriented $\e{\infty}$-ring. Then: 
\begin{enumerate}
\item  
The above adjunction induces an equivalence of symmetric monoidal $\infty$-categories $$\fun(BC_p,
\widehat{\mod(R)}_p) \simeq \widehat{\md}( C^*(BC_p; R))_{(p, \beta)}$$ 
between the $\infty$-category of $p$-complete $R$-modules with a $C_p$-action
and the $\infty$-category of $(p, \beta)$-complete 
$C^*(BC_p; R)$-modules.
\item
An object 
of $\fun(BC_p, \widehat{\mod(R)}_p)$
is nilpotent if and only if the $\beta$-action on the corresponding 
$C^*(BC_p; R)$-module 
is nilpotent. 
\end{enumerate}
\end{theorem}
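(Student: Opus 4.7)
I would handle the two parts in reverse order, since part (2) follows easily once part (1) is in hand. First I would identify the $\beta$-action explicitly: the map $e\wedge X\colon X \to \Sigma^2 X$ in $\fun(BC_p, \md(R))$, obtained from the Euler map $e\colon S^0 \to S^V$ and the complex-oriented identification $R \wedge S^V \simeq \Sigma^2 R$, goes under $(-)^{hC_p}$ to multiplication by $\beta \in \pi_{-2} C^*(BC_p; R)$ on the $C^*(BC_p; R)$-module $X^{hC_p}$; this is essentially the definition of $\beta$ in terms of the complex orientation. Combining with the earlier characterization of nilpotence (nilpotent iff $e^n \wedge X$ is null for some $n$), part (2) then follows once the equivalence in part (1) lets us translate the vanishing of $e^n\wedge X$ into $\beta^n$ acting as zero on $X^{hC_p}$.

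For part (1), my plan is to apply Lurie's $\infty$-categorical comonadicity theorem to the adjunction
\[
L : \md(C^*(BC_p;R)) \rightleftarrows \fun(BC_p, \md(R)) : (-)^{hC_p},
\]
after restricting to the completed subcategories. The main check is conservativity: if $X\in\fun(BC_p,\widehat{\md(R)}_p)$ has $X^{hC_p}=0$, I want to conclude $X=0$. Smashing $X$ with the Euler sequence
\[
S(nV)_+ \to S^0 \xrightarrow{e^n} S^{nV}
\]
and using $X\wedge S^{nV} \simeq \Sigma^{2n} X$, I would argue inductively (via the long exact sequence on homotopy fixed points) that $(X\wedge S(nV)_+)^{hC_p}=0$ for all $n$, hence $(X\wedge EC_{p+})^{hC_p}=0$. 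Since $X$ is $p$-complete and $BC_p$ is $p$-complete, the natural map $X \to F(EC_{p+}, X)$ is the relevant Borel-completion/reconstruction, from which one extracts $X=0$. The preservation of the relevant (split) limits follows from the standard formalism of \Cref{quickconvpreserved}.

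Given conservativity and Barr–Beck, $\fun(BC_p,\widehat{\md(R)}_p)$ is equivalent to comodules in $\md(C^*(BC_p;R))$ for the comonad $(-)^{hC_p}\circ L$. To identify this comodule category with $\widehat{\md}(C^*(BC_p;R))_{(p,\beta)}$, I would apply $(-)^{hC_p}$ to the Euler tower to see that it becomes the $\beta$-adic tower on $C^*(BC_p;R)$; combined with the $p$-adic tower from $p$-completion on the topological side, this matches the image of $L$ with $(p,\beta)$-complete modules. The symmetric monoidal structure is transported automatically since $L$ is symmetric monoidal and $(-)^{hC_p}$ is lax symmetric monoidal on the image. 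The hardest step is the conservativity input: without $p$-completion it simply fails (nontrivial $C_p$-actions can have trivial homotopy fixed points), and the proof must carefully use that $BC_p$ becomes $p$-good together with the complex orientation to control the failure.
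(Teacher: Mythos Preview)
Your approach via Barr--Beck is a reasonable alternative to the paper's direct ``compact generator'' argument, but the conservativity step as written has a real gap, and fixing it requires exactly the algebraic input that drives the paper's proof.

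You correctly deduce $(X\wedge S(nV)_+)^{hC_p}=0$. But then passing to $EC_{p+}$ and appealing to ``Borel-completion/reconstruction'' is circular: in $\fun(BC_p,\md(R))$ one already has $EC_{p+}\simeq S^0$ (since $EC_p$ is nonequivariantly contractible), so $X\wedge EC_{p+}\simeq X$ and you have not gained anything. The correct continuation is: $S(V)_+$ is built from free $C_p$-cells, so $X\wedge S(V)_+$ is nilpotent, and hence $(X\wedge S(V)_+)^{hC_p}=0$ forces $X\wedge S(V)_+=0$. The cofiber sequence $C_{p+}\xrightarrow{x-1}C_{p+}\to S(V)_+$ then says $x-1$ acts invertibly on $X\wedge C_{p+}$. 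But modulo $p$ one has $(x-1)^p\equiv x^p-1=0$ in $R[C_p]/p$, so $x-1$ is nilpotent there; combined with $p$-completeness this forces $X\wedge C_{p+}=0$ and hence $X=0$. This nilpotence of $x-1$ modulo $p$ is precisely the key point the paper isolates, and it is not visible in your sketch.

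By contrast, the paper avoids Barr--Beck entirely. It shows directly that the left adjoint carries the compact generator $C^*(BC_p;R)/(p,\beta)$ to $R/p\wedge S(V)_+$, is fully faithful on the thick subcategory it generates, and that $R/p\wedge S(V)_+$ is a compact generator of $\fun(BC_p,\widehat{\md(R)}_p)$ --- the last point again via nilpotence of $x-1$ mod $p$. This is cleaner because it sidesteps your final step of identifying the monad category with $(p,\beta)$-complete modules, which you only gesture at. (Incidentally, $(-)^{hC_p}\circ L$ is the \emph{monad} on the source, not a comonad; you want monadicity of $(-)^{hC_p}$, not comonadicity.) Your treatment of part (2) is fine and close to the paper's.
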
 

\begin{proof} 
Consider first the functor
\begin{equation} \label{compfunctorcp} F: {\md( C^*(BC_p; R))}  \to \fun(BC_p,
\mod(R)),\end{equation}
which carries $C^*(BC_p; R)$ to $R$ equipped with 
trivial $C_p$-action. It is clearly fully faithful on the thick subcategory
generated by 
$C^*(BC_p; R)$. Now 
the cofiber of $\beta: R \to \Sigma^2 R$ 
is carried by $F$ to $R \wedge S(V)_+ \in \fun(BC_p, \mod(R))$. 
Since every object in $\fun(BC_p, \md(R))$ is complete with respect to smashing
with $S(V)_+ \in \fun(BC_p, \sp)$, it follows that 
$F$ factors through the $\beta$-completion, so we obtain a functor
\begin{equation}  \label{complcompfunctor2}  \widehat{{\md}}( C^*(BC_p; R))_{\beta}  \to \fun(BC_p,
\mod(R)), \end{equation}
which, again, carries
$C^*(BC_p; R)$ (which is $\beta$-complete) to $R$ equipped with 
trivial $C_p$-action. In particular, it carries the cofiber of $\beta$ to
$R \wedge S(V)_+$. Equivalently, we observe that the right adjoint to $F$,
given by taking $C_p$-homotopy fixed points, lands inside $\beta$-complete
objects. 
We can then 
$p$-complete everywhere to obtain another functor
\begin{equation}  \label{complcompfunctor}  \widehat{{\md}}( C^*(BC_p;
R))_{(p, \beta)}  \to \fun(BC_p,
\widehat{\mod(R)}_p), \end{equation}

Now the cofiber of $\beta$ is a compact generator for 
$\widehat{{\md( C^*(BC_p; R))}}_{\beta}$, and the image $R \wedge S(V)_+ \in
\fun(BC_p, \md(R))$ is compact, since $S(V)_+$ is built from finitely many free
$C_p$-cells. 
Similarly, the iterated cofiber 
$C^*(BC_p; R))/(p, \beta)$ is a compact generator for 
$\widehat{{\md( C^*(BC_p; R))}}_{p, \beta}$, and it corresponds to the compact
object $R \wedge S(V)_+/p \in 
\fun(BC_p, \widehat{\md(R)}_p)$. 

We claim in fact that 
$R \wedge S(V)_+/p$ is a compact \emph{generator} for 
$\fun(BC_p, \widehat{\md(R)}_p)$.
In fact, there is a cofiber sequence
in $\fun(BC_p, \sp)$
\[  C_{p+} \stackrel{x-1}{\to} C_{p+} \to S(V)_+,  \]
where $x$ denotes multiplication by a generator of $C_p$. 
Smashing with $R/p$, we
obtain a cofiber sequence
\[  R/p \wedge C_{p+} \stackrel{x-1}{\to} R/p \wedge C_{p+} \to R/p \wedge S(V)_+,  \]
in $\fun(BC_p, \widehat{\md(R)}_p)$. 
In particular, using the identification 
$\fun(BC_p, \md(R)) \simeq \md( R[C_{p}])$, 
we find that $R/p \wedge S(V)_+$ is the cofiber of multiplication by $x-1$ on
$R[C_p]$.
However,  $x-1$ is nilpotent 
on $R/p \wedge C_{p+} \simeq R[C_p]/p$. 
Therefore, the thick subcategory of 
$\fun(BC_p, \widehat{\md(R)}_p)$ generated by $R/p \wedge S(V)_+$ actually
contains $R/p \wedge
C_{p+}$, which is a compact generator for 
$\fun(BC_p, \widehat{\md(R)}_p)$. It follows that 
$R/p \wedge S(V)_+$ is a compact generator for 
$\fun(BC_p, \widehat{\md(R)}_p)$. 

It follows that 
our comparison functor \eqref{complcompfunctor} carries a compact generator of
the source to a compact generator of the target, and it is fully faithful on
the thick subcategory generated by the compact generator in the source. It
follows now that \eqref{complcompfunctor} is an equivalence, as desired. 

Finally, we need to connect the notions of nilpotence on both sides of
\eqref{complcompfunctor}. 
Note that $X \in \fun(BC_p, \widehat{\md(R)}_p)$ is nilpotent if and only if 
$X$ is a retract of $X \wedge S(nV)_+$ for some $n$.
An object $Y$ of $\widehat{{\md( C^*(BC_p;
R))}}_{p, \beta}$ has nilpotent $\beta$-action  
if and only if $Y$ is a retract of $Y \wedge 
C^*(BC_p;
R)/\beta^n $ for some $n$. Since 
$C^*(BC_p;
\widehat{R}_p)/\beta^n $ and 
$\widehat{R}_p \wedge {S(nV)_+}$
correspond under the equivalence of \eqref{complcompfunctor}, the second
assertion follows. 
\end{proof}

We will now connect this to the Tate construction.
\begin{proposition} 
\label{descTatecons}
Let $R$ be a complex-oriented $\e{\infty}$-ring.
The Tate construction $\fun(BC_p, \md(R)) \to \md(R)$ 
is given by the composite
\[ \fun(BC_p, \md(R)) \xrightarrow{M \mapsto \widehat{M}_p}
\fun(BC_p, \widehat{\md(R)}_p) \simeq \widehat{\md( C^*(BC_p; R))}_{\beta, p}
\xrightarrow{\beta^{-1}} \md(R)
,\]
where the last functor is given by inverting $\beta$. 
In particular, $R^{tC_p} = C^*(BC_p, \widehat{R}_p)[\beta^{-1}]$.
\end{proposition}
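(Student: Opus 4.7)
The approach will be to express the Tate construction as an explicit cofiber using \Cref{tatedesc}, then identify the resulting colimit algebraically via complex orientation and the equivalence of \Cref{Cpunipotence}.

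First, I would reduce to the $p$-complete case. Since $\mathbb{D}C_{p+}$ becomes a retract of the unit after inverting $p$ (as $|C_p| = p$ is then a unit), every object of $\fun(BC_p, \md(R[1/p]))$ is nilpotent and the Tate construction vanishes identically there. This forces $M^{tC_p}[1/p] \simeq 0$, so the natural map $M^{tC_p} \to (\widehat{M}_p)^{tC_p}$ is an equivalence.

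Second, I would apply \Cref{tatedesc} with the specific model $X_n := \widehat{M}_p \wedge S(nV)_+$. Since $\varinjlim_n S(nV)_+ \simeq EC_{p+} \simeq S^0$ in $\fun(BC_p, \sp)$ (as $EC_p$ is non-equivariantly contractible) and each $S(nV)$ has free $C_p$-action (so each $X_n$ is nilpotent), we obtain
\[ (\widehat{M}_p)^{tC_p} \simeq \mathrm{cofib}\Bigl( \varinjlim_n X_n^{hC_p} \to \widehat{M}_p^{hC_p} \Bigr). \]

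Third, I would identify each $X_n^{hC_p}$ via the Euler cofiber sequence $S(nV)_+ \to S^0 \xrightarrow{e^n} S^{nV}$. Smashing with $\widehat{M}_p$, taking $(-)^{hC_p}$, and using the complex-oriented identification $\widehat{M}_p \wedge S^{nV} \simeq \Sigma^{2n} \widehat{M}_p$ (under which the Euler class corresponds to $\beta$-multiplication) gives $X_n^{hC_p} \simeq \mathrm{fib}\bigl(\beta^n : \widehat{M}_p^{hC_p} \to \Sigma^{2n} \widehat{M}_p^{hC_p}\bigr)$. Writing $N := \widehat{M}_p^{hC_p}$, the colimit $\varinjlim_n \mathrm{fib}(\beta^n)$ is the derived $\beta$-torsion part of $N$, and the standard $\beta$-localization triangle identifies the cofiber in the second step with $N[\beta^{-1}]$.

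The main obstacle will be ensuring the completions align correctly: by \Cref{Cpunipotence}, $N$ is a $(p,\beta)$-complete $C^*(BC_p;R)$-module, and inverting $\beta$ cleanly removes the $\beta$-completeness, producing a module over $C^*(BC_p;R)[\beta^{-1}] = R^{tC_p}$, and hence over $R$. Specializing to $M = R$ with trivial action gives $N = C^*(BC_p; \widehat{R}_p)$, yielding the ``in particular'' statement $R^{tC_p} \simeq C^*(BC_p; \widehat{R}_p)[\beta^{-1}]$.
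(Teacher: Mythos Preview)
Your proposal is correct and follows essentially the same approach as the paper: reduce to the $p$-complete case, apply \Cref{tatedesc} with the filtration by $S(nV)_+$, and use the Euler sequence together with complex orientation to identify the resulting cofiber with inverting $\beta$ on $\widehat{M}_p^{hC_p}$. The only cosmetic difference is that the paper passes directly to $\varinjlim_n (M \wedge S^{nV})^{hC_p}$ as the cofiber (commuting cofiber with the colimit) rather than identifying each $X_n^{hC_p}$ as $\mathrm{fib}(\beta^n)$ and invoking the $\beta$-localization triangle, but these are two ways of saying the same thing.
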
 
\begin{proof} 
This follows from the description of the Tate construction in
Construction~\ref{tatedesc}.
Note first that the map $M \to \widehat{M}_p$ induces an equivalence on Tate
constructions; this follows from the usual arithmetic square, to which the
other terms do not contribute after applying $()^{tC_p}$. 
Fix $M \in \fun(BC_p, \md(R))$. 
We write $M$ as the colimit 
$M \simeq \varinjlim_n M \wedge S(nV)_+$ in $\fun(BC
_p, \md(R))$. 
It follows that 
\[ M^{tC_p} = \mathrm{cofib}\left( \varinjlim_n (M \wedge S(nV)_+)^{hC_p}  \to
M^{hC_p} \right) 
= \varinjlim_n (M \wedge S^{nV})^{hC_p}
. \]
Now in view of the complex orientation we can identify 
this last colimit with multiplication by $\beta$, i.e.
\[ M^{hC_p} \stackrel{\beta}{\to} \Sigma^2 M^{hC_p} \stackrel{\beta}{\to}
\dots,  \]
which proves the claim.
\end{proof}

As a result, we are able to prove a unipotence result
for associative ring spectra with Tate vanishing. 
The main example is given by the Morava $K$-theories. 
We can view this as a form of the convergence of the Eilenberg-Moore spectral
sequence as in \cite{bauer}.

\begin{definition} 
We will say that a $p$-local ring spectrum $R$ satisfies \textbf{Tate vanishing} if
$R^{tC_p} = 0$. 
\end{definition}

\begin{corollary} 
Suppose $R$ is an $\e{1}$-ring spectrum in $\md(MU)$. Suppose $p$ is nilpotent in $\pi_0 R$. Then the following
are equivalent: 
\begin{enumerate}
\item $R^{tC_p} = 0$.  
\item The trivial $C_p$-action on $R$ is nilpotent.
\item The comparison functor \eqref{compfunctorcp} is an equivalence of
$\infty$-categories $\md( C^*(BC_p; R)) \simeq \fun(BC_p, \md(R))$. 
\item The image of $\beta \in MU^2( BC_p)$ in $R^2(BC_p)$  is nilpotent.
\end{enumerate}
\end{corollary}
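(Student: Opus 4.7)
The plan is to route the four-way equivalence through condition (2), using the complex orientation of $R$ (which exists because $R \in \alg(\md(MU))$). Under this orientation, in $\fun(BC_p, \md(R))$ we have $R \wedge S^{nV} \simeq \Sigma^{2n} R$, and the map $R \wedge e^n \colon R \to \Sigma^{2n} R$ is classified by the $n$-th power of $\beta \in R^2(BC_p)$. Granting this, (1) $\Leftrightarrow$ (2) is immediate from \Cref{tatevanishnilp} applied to the algebra object $R \in \alg(\fun(BC_p, \md(R)))$ with trivial $C_p$-action, since the unit of $\md(R)$ is $R$, which is compact. For (2) $\Leftrightarrow$ (4), I invoke the proposition preceding \Cref{Cpunipotence}, which characterizes nilpotent $C_p$-equivariant objects as those $X$ with $X \wedge e^n$ nullhomotopic for some $n$; specializing to $X = R$ with trivial action and unwinding $\pi_0 \hom_{\fun(BC_p, \md(R))}(R, \Sigma^{2n} R) \cong R^{2n}(BC_p)$ transforms this into nilpotence of $\beta$, i.e., (4).

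For (2) $\Leftrightarrow$ (3) I invoke \Cref{Cpunipotence} directly. Because $p$ is nilpotent in $\pi_0 R$ (and hence in $\pi_* C^*(BC_p; R)$, which is an $R$-algebra via the unit), both $\md(R)$ and $\md(C^*(BC_p; R))$ are automatically $p$-complete, and the theorem specializes to a symmetric monoidal equivalence
\[ \fun(BC_p, \md(R)) \;\simeq\; \widehat{\md(C^*(BC_p; R))}_\beta . \]
Inspecting the proof of \Cref{Cpunipotence}, one sees that the comparison functor of (3) factors through this equivalence as the $\beta$-completion $\md(C^*(BC_p; R)) \to \widehat{\md(C^*(BC_p; R))}_\beta$ followed by the displayed equivalence. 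Hence (3) is equivalent to the $\beta$-completion being an equivalence, i.e., every $C^*(BC_p; R)$-module being $\beta$-complete. Since a module $M$ is $\beta$-complete iff $M[\beta^{-1}] = 0$, this is in turn equivalent to $C^*(BC_p; R)[\beta^{-1}] = 0$, i.e., to $\beta$ being nilpotent in $R^*(BC_p)$, which is (4).

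The main obstacle is the bookkeeping in the last step: one must verify that the naive comparison functor appearing in (3) really coincides with the factorization coming out of \Cref{Cpunipotence}, and that the condition $\md(A) \simeq \widehat{\md(A)}_\beta$ for $A = C^*(BC_p; R)$ is controlled precisely by $\pi_*$-nilpotence of $\beta$ (using that $A[\beta^{-1}]$ is simultaneously $\beta$-complete and $\beta$-periodic, hence zero). The remaining equivalences are essentially direct applications of earlier structural results, the uniform computational ingredient being that complex-orientation converts the equivariant Euler map $e$ into the ordinary Euler class $\beta \in R^2(BC_p)$.
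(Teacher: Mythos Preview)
Your overall strategy is sound and matches the paper's in spirit, but there is one genuine technical gap and one minor slip.

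The gap is in your invocation of \Cref{Cpunipotence}. That theorem is stated for a complex-oriented $\e{\infty}$-ring, but here $R$ is only an $\e{1}$-algebra in $\md(MU)$. You cannot apply \Cref{Cpunipotence} to $R$ directly (and your reference to a ``symmetric monoidal equivalence'' for $\fun(BC_p,\md(R))$ is already a red flag: $\md(R)$ has no symmetric monoidal structure). The paper handles this by applying \Cref{Cpunipotence} to $MU$ itself, obtaining the symmetric monoidal equivalence
\[
\widehat{\md}(C^*(BC_p; MU))_{(p,\beta)} \;\simeq\; \fun(BC_p, \widehat{\md(MU)}_p),
\]
and then passing to module categories over the $\e{1}$-algebra object corresponding to $R$ (namely $C^*(BC_p;R)$ on the left and $R$ with trivial action on the right). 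Since $p$ is nilpotent in $\pi_0 R$, this yields $\widehat{\md}(C^*(BC_p;R))_\beta \simeq \fun(BC_p,\md(R))$, which is exactly the equivalence you wanted. This maneuver is the one missing idea in your argument; once you have it, your route through (4) goes through.

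The slip is the assertion that a module $M$ is $\beta$-complete iff $M[\beta^{-1}]=0$; this fails already for $p$-completion of $\mathbb{Z}_p$. What is true, and what you correctly sketch in your closing paragraph, is that \emph{every} module is $\beta$-complete iff $A[\beta^{-1}]=0$ for $A=C^*(BC_p;R)$: the forward direction uses that $A[\beta^{-1}]$ is then both $\beta$-complete and $\beta$-periodic, hence zero; the converse is immediate since $\beta$-local modules must then vanish. This suffices to link (3) and (4). Your treatment of (1)$\Leftrightarrow$(2) via \Cref{tatevanishnilp} and (2)$\Leftrightarrow$(4) via the Euler-class criterion is correct and is essentially how the paper packages these implications (via \Cref{descTatecons}).
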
 
\begin{proof} 

We consider the equivalence
$\widehat{\md}(C^*(BC_p; MU))_{(p, \beta)} \simeq \fun(BC_p,
\widehat{\md(MU)}_p)$ of \Cref{Cpunipotence}. 
We have an $\mathbb{E}_1$-algebra object given on the right-hand-side by $R$ with trivial
$C_p$-action and by $C^*(BC_p; R)$ on the left-hand-side. 
We can thus take module categories with respect to this algebra on both sides. 
Since $p$ is nilpotent in $\pi_0 R$, we thus obtain an equivalence 
\[ \widehat{\md}( C^*(BC_p; R))_\beta \simeq \fun(BC_p, \md(R)).  \]
The desired equivalences now follow in view of this and \Cref{descTatecons}.
For example, if $\beta$ maps to a nilpotent class in $R^2(BC_p)$, then every
module
over 
$\md( C^*(BC_p; R))$ is automatically $\beta$-complete. 
\end{proof} 

\begin{example} 
The Morava $K$-theories $K(n)$ satisfy Tate vanishing. 
In fact, $K(n)$ is complex-oriented and as $K(n)^*(BC_p)$ is a
($\mathbb{Z}/2(p^n-1)$-graded)
finite-dimensional vector space, it follows that any element of the
augmentation ideal, such as $\beta$, is necessarily nilpotent.
Compare \cite{GS96}. 
Compare also \cite[Sec. 5]{HL} for generalizations of these equivalences. 
For example, one can replace $C_p$ by any $p$-group, or in those cases by
appropriate $\pi$-finite spaces. 
\end{example}

\begin{remark} 
It is also known that Tate vanishing holds \emph{telescopically}.
Let $F$ be an associative ring spectrum which is also 
a finite type $n$ complex, and let $x \in \pi_* X$ be a central $v_n$-element
(these exist by \cite{HS98}). Then we let $T(n) = T(n, F, x) = F[x^{-1}]$. 
Then $T(n)$ satisfies Tate vanishing \cite{Kuhn}. 
However, the telescope is very much not complex orientable. 
\end{remark} 
We close with the following natural question suggested by this discussion. 
\begin{question} 
What is the exponent of nilpotence for $K(n) \in \fun(BG, \sp)$ (with trivial
action) as a function of $G$? One can also ask the analogous question for
$T(n)$, although it is likely much harder. 
\end{question}

\subsection{Nilpotence in representation theory}

In the previous subsection, we explored the 
phenomenon of \emph{nilpotence} in $\infty$-categories of the form $\fun(BG,
\mathcal{C})$. In general, the statement that 
\emph{every} object of $\fun(BG, \mathcal{C})$ should be nilpotent is very
strong and is not usually satisfied except in special cases.
However, a weaker form of nilpotence, which involves a family of subgroups of a
given group, is satisfied in a much wider array of cases. In this subsection,
we discuss it primarily in the setting of 
group actions on $k$-modules. 
The material primarily follows \cite{MNN15ii}.

Again, let $\mathcal{C}$ be a presentably symmetric monoidal stable $\infty$-category. 
We will introduce a 
generalization of \Cref{dualG} and \Cref{Tnil}. 
\begin{cons} 
For each $H \leq G$, we have a commutative algebra object $\mathbb{D}(G/H_+) \in \clg(
\fun(BG, \mathcal{C}))$. 
This commutative algebra controls \emph{restriction to $H$} in the following
sense: we have an equivalence of symmetric monoidal $\infty$-categories
\[ \md_{\fun(BG, \mathcal{C})}( \mathbb{D}(G/H_+)) \simeq \fun(BH, \mathcal{C}).  \]
We refer to \cite{BDS} for a discussion of such equivalences (at least at the
level of triangulated categories) as well as \cite[Sec. 5.3]{MNN15i}. 
\end{cons}

\begin{definition} 
Let $\sF$ be a family of subgroups of $G$. 
An object of $\fun(BG, \mathcal{C})$ is \textbf{$\sF$-nilpotent} if it is
nilpotent with respect to $\prod_{H \in \sF} \mathbb{D}(G/H_+)$.
We will say that $\mathcal{C}$ is \textbf{$\sF$-nilpotent} if every object of
$\fun(BG, \mathcal{C})$ is $\sF$-nilpotent.
\end{definition}

When $\sF$ is the trivial family (consisting just of $(1)$), then this of
course recovers \Cref{Tnil}. 
However, when $\sF$ is more general, we expand the class of examples for which 
one has $\sF$-nilpotence. 
One can show that there is always a \emph{unique} minimal family with respect
to which an object is $\sF$-nilpotent. 

\newcommand{\All}{\mathcal{A}ll}
\begin{example} 
Let $\underline{\All}$ be the family  of $p$-groups of $G$ (for any $p$). 
Then $\mathcal{C}$ is automatically $\underline{\All}$-nilpotent. 
In fact, for each $p$ let $G_p \subset G$ be a $p$-Sylow subgroup. 
Then one sees easily that $S^0 \in \fun(BG, \sp)$ is a retract of $\prod_{p
\mid |G|} \mathbb{D}(G/G_{p+})$, which implies that the unit $S^0$ is
$\underline{\All}$-nilpotent, and thus any object is.
\end{example} 

Using this, one can reduce the question of $\sF$-nilpotence to the case where
$G$ is itself a $p$-group.

\begin{proposition} 
\label{Pnil}
Let $X \in \fun(BG, \mathcal{C})$. Then the following are equivalent: 
\begin{enumerate}
\item $X$ is $\mathcal{P}$-nilpotent, where $\mathcal{P}$ is the family of
proper subgroups of $G$. 
\item Let $\widetilde{\rho_G}$ be the (complex) reduced regular representation of $G$,
i.e., the quotient of the regular representation by the trivial representation. 
Let $e_{\widetilde{\rho_G}}: S^0 \to S^{\widetilde{\rho_G}}$ be the associated
Euler class in $\fun(BG, \sp)$. 
Then $X \wedge e^n$ is nullhomotopic for some $n \gg 0$.
\end{enumerate}
\end{proposition}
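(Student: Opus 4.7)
The proof should mirror the $C_p$ case that immediately precedes this proposition, using the Euler cofiber sequence
$S(n\widetilde{\rho_G})_+ \to S^0 \xrightarrow{e^n} S^{n\widetilde{\rho_G}}$
in $\fun(BG,\sp)$, together with the representation-theoretic identity $\widetilde{\rho_G}|_H \simeq [G:H]\,\widetilde{\rho_H} \oplus ([G:H]-1)\cdot\mathbf{1}_H$ valid for every $H \leq G$. Write $A = \prod_{H \in \mathcal{P}} \mathbb{D}(G/H_+) \in \clg(\fun(BG,\mathcal{C}))$, so that $\mathcal{P}$-nilpotence is precisely $A$-nilpotence in the sense of Section~2.

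For $(1) \Rightarrow (2)$, the first step is to check that $e$ is $A$-zero in $\fun(BG, \mathcal{C})$, which reduces (since $A$ is a finite product) to showing $e \wedge \mathbb{D}(G/H_+) \simeq 0$ for each $H \in \mathcal{P}$. Under the symmetric monoidal equivalence $\md_{\fun(BG, \mathcal{C})}(\mathbb{D}(G/H_+)) \simeq \fun(BH, \mathcal{C})$, this amounts to the nullhomotopy of the restricted Euler class $e|_H$ in $\fun(BH, \mathcal{C})$, which follows from the identity above: for $H$ proper we have $[G:H]-1 \geq 1$, so $\widetilde{\rho_G}|_H$ carries a trivial $H$-summand of real dimension at least two, and $e|_H$ factors through a null map $S^0 \to S^{2m}$ of non-equivariant spheres. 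The second step is to propagate this: once $e$ is $A$-zero, it factors through $I \otimes S^{\widetilde{\rho_G}} \to S^{\widetilde{\rho_G}}$ with $I = \mathrm{fib}(\mathbf{1} \to A)$, and iterating $n$ times, $e^n$ factors through $I^{\otimes n} \otimes S^{n\widetilde{\rho_G}} \to S^{n\widetilde{\rho_G}}$. By \Cref{def:nilpexp}, if $\exp_A(X) \leq n$ then $I^{\otimes n} \to \mathbf{1}$ is null after smashing with $X$, forcing $X \wedge e^n \simeq 0$.

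For $(2) \Rightarrow (1)$, I smash the Euler sequence with $X$: if $X \wedge e^n \simeq 0$, then $X$ is a retract of $X \wedge S(n\widetilde{\rho_G})_+$. Because $(\widetilde{\rho_G})^G = 0$, the unit sphere $S(n\widetilde{\rho_G})$ has no $G$-fixed points, so it admits a finite $G$-CW structure whose cells are all of the form $G/H_+ \wedge D^k$ with $H \in \mathcal{P}$. Thus $S(n\widetilde{\rho_G})_+$ lies in the thick tensor ideal of $\fun(BG, \mathcal{C})$ generated by $\{G/H_+\}_{H \in \mathcal{P}}$, and since each $G/H_+$ is dualizable with dual $\mathbb{D}(G/H_+)$, this tensor ideal coincides with $\nil_A$; hence $X \in \nil_A$.

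The only place where work is needed beyond assembling the general machinery of Section~2 is the representation-theoretic input: verifying that $(\widetilde{\rho_G})^G = 0$ while $(\widetilde{\rho_G})^H \neq 0$ for every proper $H \leq G$. Granted this, both implications follow cleanly from the Euler-sequence technology and the exponent-of-nilpotence formalism developed above.
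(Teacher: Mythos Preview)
The paper actually states Proposition~\ref{Pnil} without proof; it immediately moves on to discuss Quillen's and Carlson's work. Your argument is correct and is precisely the natural generalization of the $C_p$ case that the paper \emph{does} prove in detail just above. In particular, the two ingredients you isolate --- that $e_{\widetilde{\rho_G}}$ becomes null after restriction to any proper $H$ (via the trivial summand in $\widetilde{\rho_G}|_H$), and that $S(n\widetilde{\rho_G})$ has a finite $G$-CW structure with only proper isotropy (since $(\widetilde{\rho_G})^G = 0$) --- are exactly the representation-theoretic facts one needs, and the rest is the Adams-filtration and Euler-sequence machinery from Section~2. The paper later invokes this proposition in the proof sketch of the ``General reduction step'' (where it says ``by \Cref{Pnil}, it suffices to consider the Euler class $e: S^0 \to S^{\widetilde{\rho_G}}$''), confirming that your reading of the intended argument is on the mark.
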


The first case where this was effectively studied was when $\mathcal{C}$ is the
derived $\infty$-category of a ring, starting with the work of Quillen
\cite{Qui71} and later expanded by Carlson \cite{Carlson}. See also
the work of Balmer \cite{Balmersep}, especially for the context of
descent up to nilpotence.

Fix a finite group $G$ and a field $k$ of characteristic $p> 0$. 
We take $\mathcal{C} = \md(k) = D(k)$. 
We consider the presentably symmetric monoidal $\infty$-category $ \fun(BG, \md(k))$, the $\infty$-category of objects
in $\md(k)$ (or $D(k)$) equipped with a $G$-action, which we can also realize as the
derived $\infty$-category  $D( k[G])$. Using the $k$-linear tensor product,
this is a presentably symmetric monoidal, stable $\infty$-category. 
We will also write $k^{G/H}$ for $\mathbb{D}(G/H)_+$ in here. 

It has been known since the work of Quillen \cite{Qui71} that when working
with phenomena ``up to nilpotence'' in the cohomology of finite groups, the
the \emph{elementary abelian} $p$-subgroups (i.e., subgroups of the form
$C_p^n$ for some $n$) 
play the basic role. This can be formulated in the following result. 

\begin{theorem}[Carlson \cite{Carlson}] 
\label{descentRepthy}
Let $\mathcal{E}_p = \mathcal{E}_p(G)$ be the family of elementary abelian $p$-subgroups of $G$.
Then $\md(k)$ is $\mathcal{E}_p$-nilpotent; that is, 
the algebra $\prod_{H \in \mathcal{E}_p(G)} k^{G/H} \in \clg( \fun(BG, \md(k)))$ is
descendable. 
\end{theorem}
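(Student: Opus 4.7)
The plan is a double induction. First, I would reduce to the case where $G$ is itself a $p$-group by choosing a Sylow $p$-subgroup $G_p \leq G$: since $[G:G_p]$ is invertible in $k$, the transfer provides a retraction $k \to k^{G/G_p} \to k$ in $\fun(BG, \md(k))$, so $k$ lies in the thick $\otimes$-ideal generated by $k^{G/G_p}$. Under the equivalence $\md_{\fun(BG,\md(k))}(k^{G/G_p}) \simeq \fun(BG_p,\md(k))$ and the inclusion $\mathcal{E}_p(G_p) \subset \mathcal{E}_p(G)$, the statement for $G$ follows from the statement for $G_p$.

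Now assume $G$ is a $p$-group and induct on $|G|$. The base case where $G$ itself is elementary abelian is trivial: then $G \in \mathcal{E}_p(G)$ and $k = k^{G/G}$ is a factor of the algebra $\prod_{H \in \mathcal{E}_p(G)} k^{G/H}$. Otherwise, assume the theorem for every proper subgroup. By the same equivalence as above, the inductive hypothesis applied to each $H \lneq G$ shows that $k^{G/H}$ lies in the thick $\otimes$-ideal of $\fun(BG,\md(k))$ generated by $\{k^{G/H'}\}_{H' \in \mathcal{E}_p(H)} \subset \{k^{G/H'}\}_{H' \in \mathcal{E}_p(G)}$. By transitivity of containment in thick $\otimes$-ideals, it suffices to show that $k$ itself is $\mathcal{P}$-nilpotent in $\fun(BG,\md(k))$, where $\mathcal{P}$ denotes the family of all proper subgroups of $G$.

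By \Cref{Pnil}, $\mathcal{P}$-nilpotence of $k$ is equivalent to the nilpotence of the Euler class $e_{\widetilde{\rho_G}}$ of the reduced complex regular representation in $H^*(BG; k)$. I would verify this using Quillen's $F$-isomorphism theorem, which reduces the nilpotence of a class in $H^*(BG; k)$ to its nilpotence after restriction to every elementary abelian $E \leq G$. Since $G$ is assumed non-elementary abelian, every such $E$ is a proper subgroup, and as an $E$-representation
\[
\widetilde{\rho_G}|_E \;\cong\; [G:E]\,\rho_E - \mathbf{1} \;\cong\; ([G:E]-1)\cdot \mathbf{1} \;\oplus\; [G:E]\cdot \widetilde{\rho_E},
\]
which contains at least one copy of the trivial representation as a summand because $[G:E] \geq 2$. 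The Euler class is multiplicative in direct sums and vanishes on the trivial line bundle, so $e_{\widetilde{\rho_G}}|_E = 0$ in $H^*(BE; k)$, completing the argument.

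The substantive external input is Quillen's $F$-isomorphism theorem; without such a detection result, proving nilpotence of $e_{\widetilde{\rho_G}}$---let alone with an effective exponent---would be considerably harder. Everything else is formal bookkeeping using the dictionary between $k^{G/H}$-modules in $\fun(BG,\md(k))$ and objects of $\fun(BH,\md(k))$, together with the Euler-class criterion of \Cref{Pnil}. In this sense the argument repackages classical Quillen stratification in the descent-theoretic language of the paper.
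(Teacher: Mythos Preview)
Your argument is correct. The paper does not actually give its own proof of this theorem: it simply quotes Carlson's original statement (the existence of a finite filtration $0 = W_0 \subset \dots \subset W_n$ of $G$-representations whose subquotients are induced from elementary abelian subgroups and with $k$ a retract of $W_n$) and observes that this immediately implies descendability. Your proposal, by contrast, gives an actual proof strategy: Sylow reduction, induction on $|G|$, the Euler-class criterion of \Cref{Pnil}, and Quillen's $\mathscr{F}$-isomorphism to detect nilpotence of $e_{\widetilde{\rho_G}}$ on elementary abelian subgroups.

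Two remarks are worth making. First, within the logical flow of \emph{this} paper there is a circularity: the paper later derives Quillen's theorem (\Cref{Quithm}) \emph{from} Carlson's theorem via the descent spectral sequence, so invoking Quillen to prove Carlson would be begging the question. You are right that Quillen's original proof is independent and predates Carlson, so as a free-standing argument your route is sound; you flagged this yourself. Second, your approach is in fact the template the paper uses in the next subsection for arbitrary complex-oriented ring spectra (the ``General reduction step'' together with the proofs of \Cref{nilpabelian} and \Cref{nilpln}): reduce $\sF$-nilpotence to nilpotence of Euler classes via complex orientation, then verify that nilpotence by restriction. So your proof is not Carlson's direct module-theoretic construction, but it is precisely the argument the paper would give if it chose to reprove the result in its own idiom. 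One small omission: you should say explicitly that the complex orientation of $Hk$ is what turns the nullhomotopy condition of \Cref{Pnil} into the vanishing of a class in $H^{2(|G|-1)}(BG;k)$, via the Thom isomorphism $k \wedge S^{\widetilde{\rho_G}} \simeq \Sigma^{2(|G|-1)} k$ in $\fun(BG,\md(k))$.
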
 

Carlson's original statement \cite[Th. 2.1]{Carlson} is that there exists 
a filtration
of $G$-representations over $k$
\[ 0  = W_0 \subset W_1 \subset \dots \subset W_n  \]
such that: 
\begin{enumerate}
\item Each quotient $W_i/W_{i-1}$ is induced from some elementary abelian
$p$-subgroup.  
\item The trivial representation $k$ is a retract of $W_n$, i.e., $W_n \simeq
k \oplus W_n'$ for some $G$-representation $W_n'$. 
\end{enumerate}
Clearly, this implies \Cref{descentRepthy}.  

\Cref{descentRepthy} turns out to be closely related to the ``stratification''
results on cohomology of finite groups pioneered by Quillen \cite{Qui71}. 
Recall the statement of Quillen's results. 
Given a finite group $G$ and a subgroup $H \subset G$, we have 
a restriction map $H^*(G; k) \to H^*(H; k)$. 
We let $\mathcal{O}(G)$ denote the \emph{orbit category} of $G$, i.e., the
category of all $G$-sets of the form $G/H, H \subset G$. 
Then we have a functor
\[ \mathcal{O}(G)^{op} \to \mathrm{Ring}, \quad G/H \mapsto H^*(H; k) \simeq 
H^*( G/H \times_G EG; k)
.   \]

\begin{theorem}[Quillen \cite{Qui71}] 
\label{Quithm}
Let $\mathcal{O}_{\mathcal{E}_p}(G) \subset \mathcal{O}(G)$ be the subcategory
of $G$-sets of the form $G/H$ with $H$ an elementary abelian $p$-group. 
The natural map
\[ H^*(G; k) \to \varprojlim_{G/H \in \mathcal{O}_{\mathcal{E}_p}(G)^{op} } H^*(H; k)  \]
is a uniform $\mathscr{F}_p$-isomorphism, that is: 
\begin{enumerate}
\item There exists $N$ such that any element $x \in H^*(G; k)$ in the kernel
satisfies $x^N = 0$. 
\item There exists $M$ such that given any element $y$ of the codomain, $y^{p^M}$ belongs to the image. 
\end{enumerate}
\end{theorem}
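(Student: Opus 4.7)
The plan is to run the $A$-based Adams spectral sequence for the descendable algebra of Theorem 3.24 and extract the two parts of Quillen's theorem from the horizontal vanishing line together with a Frobenius argument in characteristic $p$.

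First, I would set $A = \prod_{H \in \mathcal{E}_p(G)} k^{G/H} \in \clg(\fun(BG, \md(k)))$; by Theorem 3.24 it is descendable. Hence the cobar construction $\cb(A)$ is quickly converging with totalization $k$ in $\fun(BG,\md(k))$, and applying the exact functor $(-)^{hG}$ (which preserves finite limits, hence quickly-converging totalizations by \Cref{quickconvpreserved}) yields a quickly-converging cosimplicial spectrum $\cb(A)^{hG}$ with $\mathrm{Tot}\,\cb(A)^{hG} = k^{hG}$. Its BKSS is a multiplicative spectral sequence of graded $\mathbb{F}_p$-algebras
\[ E_2^{s,t} = H^s\bigl(\pi_t\,\cb(A)^{hG}\bigr) \Longrightarrow \pi_{t-s}k^{hG} = H^{s-t}(G;k), \]
and by \Cref{quickconvss} (applied as in \Cref{degeneratefinitevanishingline}) it collapses with a horizontal vanishing line $E_N^{s,*}=0$ for $s>h$ at some finite stage $(N,h)$.

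Next, I would identify the zero-column $E_2^{0,*}$ with the codomain of Quillen's map. Using the adjunction equivalence $(k^{G/H})^{hG} \simeq k^{hH}$ and the orbit decomposition $G/H_1 \times G/H_2 \simeq \coprod_{[g] \in H_1\backslash G/H_2} G/(H_1 \cap gH_2 g^{-1})$, a direct computation of the cosimplicial abelian group $\pi_*\,\cb(A)^{hG}$ in low cosimplicial degree shows that the equalizer of its first two face maps reproduces precisely the compatibility conditions cutting out $\varprojlim_{G/H \in \mathcal{O}_{\mathcal{E}_p}(G)^{op}} H^*(H;k)$ inside $\prod_H H^*(H;k)$, since morphisms in the orbit category are classified by the same double-coset data. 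The map of the theorem therefore factors as the edge homomorphism $H^*(G;k) \twoheadrightarrow E_\infty^{0,*} \hookrightarrow E_2^{0,*}$.

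The kernel part of the $\mathscr{F}_p$-isomorphism is now immediate from multiplicativity: any $x$ in the kernel has filtration $\geq 1$, so $x^n$ has filtration $\geq n$ in the convergent multiplicative filtration on $H^*(G;k)$, and the vanishing line forces $x^{h+1}=0$, giving $N = h+1$. For the image part, take $y \in E_2^{0,*}$. The only potentially nonzero differentials on the tower of Frobenius iterates of $y$ land in the strip $1 \leq s \leq h$, so at most $h$ of them can be nontrivial. Since the spectral sequence is $\mathbb{F}_p$-multiplicative, each differential $d_r$ is a graded derivation, and so $d_r(z^p) = p z^{p-1} d_r z = 0$ in characteristic $p$; iterating Frobenius at most $h$ times kills each successive obstruction in turn, so that $y^{p^M}$ is a permanent cycle for $M$ bounded by a quantity depending only on $(N,h)$, and thus lifts to a class in $H^*(G;k)$.

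The main obstacle is making the Frobenius step quantitative and uniform. The subtlety is that killing a $d_r$-differential by passing to $y^p$ only shows $y^p$ survives one further page, and one must check inductively that the $p^M$-th power does not pick up a genuinely new obstruction at some later page in a way that depends on $y$. Here the uniformity of the vanishing line (the same $h$ works throughout the whole $0$-column at $E_N$) is exactly what is needed to conclude that a single $M$ depending only on $(N,h)$ suffices simultaneously for every $y$; this is standard but is the most delicate bookkeeping in the argument.
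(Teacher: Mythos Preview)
Your proposal is correct and follows essentially the same approach as the paper's proof sketch: both set up the $A$-based Adams spectral sequence for the descendable algebra $A=\prod_{H\in\mathcal{E}_p(G)}k^{G/H}$, identify the zero-line $E_2^{0,*}$ with the inverse limit over $\mathcal{O}_{\mathcal{E}_p}(G)^{op}$, deduce nilpotence of the kernel from the horizontal vanishing line, and handle the image via the Leibniz rule $d_r(z^p)=pz^{p-1}d_r(z)=0$ in characteristic $p$. Your write-up supplies more detail than the paper's sketch (the double-coset identification of $E_2^{0,*}$ and the explicit bookkeeping for uniformity of $M$), but the strategy is identical.
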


\Cref{Quithm} can be recovered from \Cref{descentRepthy} using the Adams
spectral sequence and the machinery of descent up to nilpotence. 
\begin{proof}[Proof sketch of \Cref{Quithm}] 
We claim that this follows from the horizontal vanishing line in the
Adams-based spectral sequence. Namely, we consider $\mathcal{C} = \fun(BG,
\md(k))$ and the algebra object $A = \prod_{H \in \mathcal{E}_p(G) } k^{G/H}$,
which is descendable by \Cref{descentRepthy}. 
Thus, we have a cosimplicial object 
$\cb(A)$
which quickly converges to the unit. 
Taking homotopy in $\mathcal{C}$ (equivalently, forming $\pi_* ()^{hG}$ everywhere)
we thus obtain a spectral sequence converging to the homotopy groups of
$k^{hG} = C^*(BG; k)$. 
The 
spectral sequence collapses with a horizontal vanishing line 
at a finite stage. 
Therefore, anything in positive filtration is nilpotent. 
Moreover, using the Leibniz rule, one sees that any class on the zero-line
$E_2^{0, t}$ survives after applying the Frobenius sufficiently many times.
One can identify the zero-line $E_{2}^{0,t}$
with the inverse limit 
$\varprojlim_{G/H \in \mathcal{O}_{\mathcal{E}_p}(G)^{op} } H^*(H; k)$, which
implies the result. 
\end{proof}

We can ask to make this quantitative. 
For simplicity, we state the problem for the 
family of proper subgroups. 
\begin{question} 
Let $G$ be a finite $p$-group which is not elementary abelian and let $k$ be a
field of characteristic $p$.
What is the exponent of nilpotence of the trivial representation $k$ with respect $A_{\mathcal{P}} = \prod_{H \subsetneq
G} k^{G/H} \in \fun(BG, \md(k))$?  We write this as $n_{\mathcal{P}}(G)$.

There are several equivalent reformulations of this question.
\begin{enumerate}
\item  
Let $Y$ be representation of $G$ and let $\alpha \in H^k(G, Y)$ be a class
which restricts to zero on proper subgroups. 
Then there exists $n$ such that $\alpha^{\otimes n} $ vanishes in $H^{nk}(G;
Y^{\otimes n})$. What is the minimal $n$ (for all $Y$)? 
\item
Let $V$ be the representation $\bigoplus_{H < G} k^{G/H}$ (i.e.,
$A_{\mathcal{P}}$) and consider the
natural short exact sequence of representations 
\[ 0 \to k \to  \bigoplus_{H < G} k^{G/H} \to W \to 0,\]
where $W = V/k$. This defines a natural class in $\mathrm{Ext}^1_{k[G]}(W, k)$, or
equivalently a class $u \in H^1(G, W^{\vee})$. 
There exists $n$ such that $u^{\otimes n} \in H^n(G, W^{\vee, \otimes n})$
vanishes. What is the minimal $n$? 
\item 
What is the minimal dimension of a finite $G$-CW complex $F$ such that $F^G =
\emptyset$ and such that the map $k \wedge F_+ \to k$ in $\fun(BG, \md(k))$
admits a section? Then $n_{\mathcal{P}}(G) = \dim F + 1$. 
Compare \cite[Prop. 2.26]{MNN15ii}. 
\end{enumerate}
\end{question}

Suppose $G$ is not elementary abelian and let 
$I_{\mathrm{ess}} \subset H^*(G; k)$ be the ideal of \emph{essential}
cohomology classes, i.e., those which restrict to zero on all proper subgroups. 
There is a significant literature on the essential cohomology ideal, 
and it was conjectured by Mui (unpublished) and Marx \cite{Marx} that
$I_{\mathrm{ess}}^2 = 0$. 
The conjecture was disproved by Green \cite{Greeness}.

We can interpret the ideal in our framework.
The ideal $I_{\mathrm{ess}}$ consists of the classes of Adams filtration $\geq 1$ in $\pi_*
\mathbf{1}$ in $\fun(BG, \md(k))$ with respect to the algebra
object $A_{\mathcal{P}}$. It follows
from the $A_{\mathcal{P}}$-based Adams spectral sequence that 
$I_{\mathrm{ess}}^{n_{\mathcal{P}}(G)} = 0$.

We can give some upper bounds on $N_{\mathcal{P}}(G)$ as follows.

\begin{example} 
Suppose we have a surjection $\phi: G \twoheadrightarrow G'$ and $G'$ is not
elementary abelian. 
Then $n_{\mathcal{P}}(G) \leq n_{\mathcal{P}}(G')$.
This follows from item (3) above. 
\end{example} 

\begin{example} 
Suppose $G$ has a complex irreducible representation $V$ which is not a
character.
Then $G$ acts on the 
projective space $\mathbb{P}(V)$ without fixed points. 
Then, using the projective bundle formula, one has an equivalence in $\fun(BG,
\md(k))$ given by 
\[  k \wedge \mathbb{P}(V)_+  \simeq \bigoplus_{i = 0}^{\dim_{\mathbb{C}} V }
\Sigma^{2i} k.  \]
Moreover, $\mathbb{P}(V)_+$ has a 
cell decomposition with cells of the form $S^{j} \times G/H_+$ for $H
\subsetneq G$ for $j \leq 2 (\dim_{\mathbb{C}}V - 1)$ by the equivariant triangulation theorem. 
Using a cell decomposition,
one sees that (cf. also \cite[Ex. 5.16]{MNN15ii})
\[ \exp_{A_{\mathcal{P}}}(k) \leq \exp_{A_{\mathcal{P}}}(  k \wedge \mathbb{P}(V)_+)
\leq  2 \dim_{\mathbb{C}}V - 1.
\]
\end{example} 

\begin{example} 
Suppose $\phi: G \twoheadrightarrow C_p$ is a surjection. 
Let $\beta \in H^2(C_p; k)$ be the usual class.
Then the cofiber of $\beta$ in $\fun(BC_p, k)$ has nilpotence
exponent 
$2$ as it can be identified with 
$k \wedge S(V)_+$ as above. 
Thus, the cofiber of $\phi^* \beta$ has $A_{\mathcal{P}}$-exponent $\leq 2$. 
For any family of surjections $\phi_1, \dots, \phi_n$, the cofiber of $\phi^*
\beta_1 \dots \phi^* \beta_n \in H^{2n}(G; k)$ has nilpotence exponent at most $2n$. 
The \emph{cohomological length} $\mathrm{chl}(G)$ of $G$ is the smallest $n$ such that there
exist surjections $\phi_1, \dots, \phi_n$ with 
$\phi^*
\beta_1 \dots \phi^* \beta_n = 0$; Serre's theorem \cite[Prop. 4]{serre}
implies that such an $n$ exists when $G$ is not elementary abelian.
It follows that 
$n_{\mathcal{P}}(G) \leq 2 \mathrm{chl}(G)$. 
We refer to \cite{Yalcin} for upper bounds for $\mathrm{chl}(G)$. 
\end{example} 

In general, as above, there are both geometric and cohomological methods of
obtaining upper bounds on $n_{\mathcal{P}}(G)$. It seems more difficult to
obtain effective \emph{lower} bounds on $n_{\mathcal{P}}(G)$, for example:

\begin{question} 
Is $n_{\mathcal{P}}(G)$ \emph{unbounded} as $G$ varies?
\end{question} 


\subsection{Quillen's theorem over other bases}
We can attempt to replace $k$ with any ring spectrum $R$ here. 
That is, we can consider $\fun(BG, \md(R))$ as a ``brave new'' representation
category, and ask about the analogs of 
\Cref{descentRepthy}. It turns out that a better and more wide-ranging generalization is
to work with ``genuinely'' $G$-equivariant spectra, as in \cite{MNN15i,
MNN15ii}, but we will not treat this generality here. 
\newcommand{\GSpec}{\mathrm{Sp}_G}

We raise the following two general question, to which  the
answer is known at least in a wide variety of special cases. 
\begin{question} 
Let $R$ be an $\mathbb{E}_\infty$-ring spectrum and let $G$ be a finite group. 
What is the minimal family for which  $\md(R)$ is $\sF$-nilpotent? 
If $R$ is an $\e{1}$-ring spectrum, what is the minimal family $\sF$ for which $R \in
\fun(BG, \sp)$ (with trivial action) is nilpotent?\footnote{For the results
below, it is best not only to restrict to $\e{\infty}$-rings as many natural
examples are not (or not known to be) $\e{\infty}$. If $R$ is $\e{\infty}$, the
two statements in the question are equivalent.} 
\end{question}

\begin{remark} 
In the language of \cite{MNN15ii}, the above question is equivalent to asking
for the \emph{derived defect base} of  the Borel-equivariant $G$-spectrum
associated to $R$ (with trivial $G$-action). 
\end{remark}

We have the following two basic analogs over other bases, all proved in
\cite{MNN15ii}. 
The first is a basic ``complex-oriented''
form of \Cref{descentRepthy}, and can be proved in many ways, such as the use of the flag
variety.
One knows also that this cannot be improved; e.g. for $R = MU$ the family of
abelian $p$-subgroups is the best possible. 
The second is closely related to the Hopkins-Kuhn-Ravenel character theory of
\cite{HKR00}.

\begin{theorem} 
\label{nilpabelian}
If $R$ is complex-oriented, then $R \in \fun(BG, \sp)$ is $\sF$-nilpotent for $\sF$ the
family of abelian $p$-subgroups of $G$.
\end{theorem}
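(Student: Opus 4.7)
The plan combines a Sylow reduction with a flag variety argument, invoking the complex orientation of $R$ at the final step.

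First, by the example of $\underline{\All}$-nilpotence for $\underline{\All}$ the family of $p$-subgroups of $G$, the unit $S^0 \in \fun(BG, \sp)$ is a retract of $\prod_{p \mid |G|} \mathbb{D}(G/G_{p+})$, where $G_p$ denotes a $p$-Sylow subgroup. Consequently $\sF$-nilpotence of $R$ can be checked Sylow-by-Sylow, and we may assume $G$ is itself a $p$-group. I then proceed by induction on $|G|$: if $G$ is abelian, the trivial subgroup $G \in \sF$ and there is nothing to prove.

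For the inductive step, let $G$ be a non-abelian $p$-group. Fix a faithful complex representation $V$ of complex dimension $n$, giving an embedding $G \hookrightarrow U(n)$, and consider the complete flag variety $F = U(n)/T$ as a finite $G$-CW complex via this embedding. Every stabilizer $G \cap gTg^{-1}$ is a subgroup of a maximal torus of $U(n)$ and hence abelian, so a $G$-cell decomposition of $F_+$ exhibits $R \wedge F_+ \in \fun(BG, \md(R))$ as an object of the thick $\otimes$-ideal generated by $\{\mathbb{D}(G/A_+) \wedge R : A \subseteq G \text{ abelian}\}$; in other words, $R \wedge F_+$ is already $\sF$-nilpotent. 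It therefore remains to show that $R$ itself lies in the thick $\otimes$-ideal of $\fun(BG, \md(R))$ generated by $R \wedge F_+$.

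This last point is where the complex orientation of $R$ enters essentially. The orientation $MU \to R$ supplies a Thom isomorphism for the tangent bundle of $F$, and hence an equivariant Gysin transfer $R \wedge F_+ \to R$ whose composite with the unit $R \to R \wedge F_+$ is multiplication by $\chi(F) = n!$ on $R$. While $n!$ is not in general a unit, the Bruhat (Schubert) cell filtration of $F$ together with the Thom isomorphism on each successive normal bundle allows one to inductively build $R$ out of $R \wedge F_+$ via cofiber sequences internal to the thick $\otimes$-ideal.

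The main obstacle is this final step: arranging the filtration so that one genuinely recovers $R$ itself rather than a nonzero suspension from the pieces of $R \wedge F_+$. The cleanest alternative is to bypass the explicit Schubert calculation and invoke generalized character theory in the spirit of Hopkins-Kuhn-Ravenel: for complex-oriented $R$ the restriction map $R^*(BG) \to \prod_{[A]} R^*(BA)$ becomes an isomorphism after a suitable faithfully flat base change, which translates directly into descendability of $\prod_{A \in \sF} \mathbb{D}(G/A_+)$ in $\fun(BG, \md(R))$ and hence $\sF$-nilpotence of $R$.
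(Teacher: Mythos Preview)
Your overall strategy matches the paper's: both use a faithful complex representation and the associated flag variety to reduce $\sF$-nilpotence to the abelian isotropy of $F$. The gap is precisely where you flag it, in your Step~5, and neither of your proposed resolutions actually works.

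The Gysin/transfer argument fails because the composite $R \to R \wedge F_+ \to R$ is multiplication by $\chi(F)=n!$, which is typically divisible by $p$ and hence not a unit on a $p$-local $R$. The Schubert cell argument, as you describe it, gives a filtration of $R \wedge F_+$ with subquotients $\Sigma^{2d_i}R$; but that only shows $R \wedge F_+$ lies in the thick subcategory generated by $R$, not the reverse direction you need. Your fallback to Hopkins--Kuhn--Ravenel is also not available: HKR character theory applies to $E_n$ and its relatives, not to an arbitrary complex-oriented $R$ such as $H\mathbb{Z}$ or $MU$.

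What the paper does (and what you are missing) is the \emph{projective bundle formula}. The flag variety sits in a tower of projective bundles $F \to \mathbb{P}(V_{n-1}) \to \cdots \to \mathbb{P}(V) \to \ast$, and for a complex-oriented $R$ one has an equivariant splitting $R \wedge \mathbb{P}(W)_+ \simeq \bigoplus_{i=0}^{\dim W -1}\Sigma^{2i}R$ in $\fun(BG,\md(R))$ for any $G$-representation $W$. Iterating, $R \wedge F_+$ splits as a direct sum of even suspensions of $R$, so $R$ is literally a retract of $R \wedge F_+$. Equivalently (and this is how the paper phrases it), $R^*(BG)\to R^*(F)$ is split injective. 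Combined with your observation that $R \wedge F_+$ is $\sF$-nilpotent, this finishes the proof. The paper packages the same idea slightly differently, via the ``General reduction step'' and the Euler class of the reduced regular representation, but the essential input is identical.
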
 
\begin{theorem} 
\label{nilpln}
If $R$ is $L_n$-local, then $R \in \fun(BG, \sp)$ is $\sF$-nilpotent for $\sF$ the
family of abelian rank $\leq n$ $p$-subgroups of $G$.
\end{theorem}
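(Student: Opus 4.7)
My strategy is to combine \Cref{nilpabelian} with a chromatic rank-reduction argument.

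I would begin with the standard reductions. Working one prime at a time (since the family of Sylow subgroups is automatically descendable), one may assume $G$ is a $p$-group. By transitivity of $\sF$-nilpotence together with \Cref{nilpabelian}, it then suffices to prove: for every finite abelian $p$-group $A$ and every $L_n$-local ring spectrum $R$, the trivial $A$-action on $R$ is $\sF_{\leq n}(A)$-nilpotent in $\fun(BA, \sp)$, where $\sF_{\leq n}(A)$ denotes the family of subgroups of $A$ of rank at most $n$. Since \Cref{nilpabelian} assumes complex orientation, to apply it one first extends scalars along $R \to R \wedge L_n MU$ and transfers the conclusion back via descent; verifying descendability of this map in the $L_n$-local setting is a technical but standard step.

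Next, an induction on $\mathrm{rank}(A) - n$, using transitivity applied to the family of proper subgroups at each step, reduces the problem to a single base case: $\mathrm{rank}(A) = n+1$ and $\sF = \sF_{<n+1}(A)$, the family of all proper subgroups. Here the Euler class criterion \Cref{Pnil} identifies $\sF_{<n+1}(A)$-nilpotence of $R$ with the statement that some smash power of the Euler class $S^0 \to S^{\widetilde{\rho}_A}$ of the reduced regular representation becomes null after smashing with $R$ in $\fun(BA, \md(R))$.

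The main obstacle is to verify this Euler class nilpotence from the $L_n$-local hypothesis alone. I would reduce first to the universal case $R = L_n S^0$ (since every $L_n$-local ring spectrum is a module over it) and induct on $n$; the base case $n = 0$ is immediate, as $L_0$-localization inverts $p$ and hence makes all Tate constructions vanish. For the inductive step, the chromatic fracture square
\[
L_n S^0 \simeq L_{n-1} S^0 \times_{L_{n-1} L_{K(n)} S^0} L_{K(n)} S^0
\]
decomposes the question into $L_{n-1}$-local and $L_{K(n)}$-local pieces; the former is handled by the inductive hypothesis (noting that $\sF_{\leq n-1}(A) \subseteq \sF_{\leq n}(A)$). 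For the $L_{K(n)}$-local piece --- the chromatically essential case --- I would descend along the pro-Galois map $L_{K(n)} S^0 \to E_n$ to reduce to proving that $e(\widetilde{\rho}_A)$ is nilpotent in $E_n^*(BA)$ for $A$ of rank $n+1$, which is the content of Hopkins-Kuhn-Ravenel character theory: the height-$n$ formal group over $E_n$ does not have enough room to support a non-nilpotent product of $p^{n+1}-1$ Euler classes of characters of a rank-$(n+1)$ abelian $p$-group.
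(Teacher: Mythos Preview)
Your proposal is essentially correct, but it takes a considerably more roundabout path than the paper and double-counts the main chromatic input.

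The paper's argument is very short: reduce to $R = L_n S^0$; then invoke the Hopkins--Ravenel smash product theorem, which says precisely that $E_n$ is descendable over $L_n S^0$, to reduce to $R = E_n$; now $E_n$ is complex-oriented, so the general reduction step (the Proposition immediately preceding the proof) applies, and one is left with checking that for abelian $G$ of rank $> n$ the restriction $E_n^*(BG) \to \prod_{G' \subsetneq G} E_n^*(BG')$ has nilpotent kernel --- in fact it is injective. There is no chromatic fracture, no induction on $n$, and no $K(n)$-local pro-Galois descent.

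Your route uses the same deep input twice. The ``technical but standard step'' you invoke to make $R \to R \wedge L_n MU$ descendable \emph{is} the smash product theorem; once you have it, you are already done reducing to a complex-oriented $L_n$-local ring, and the chromatic fracture / induction on $n$ / descent along $L_{K(n)} S^0 \to E_n$ that follow are redundant reworkings of the same reduction. If you wanted to avoid the smash product theorem and argue purely $K(n)$-locally via fracture, that is a legitimate alternative, but then you should not also be invoking it earlier.

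Two minor imprecisions worth fixing. First, your induction ``on $\mathrm{rank}(A) - n$'' does not work as stated: proper subgroups of an abelian $p$-group need not have smaller rank (e.g.\ $C_p^{n+1} \subset C_{p^2} \times C_p^n$). The correct induction is on $|A|$, and the statement one must verify is that for \emph{every} abelian $A$ of rank $> n$ (not only rank exactly $n+1$) the Euler class of $\widetilde{\rho}_A$ becomes nilpotent on $R$. Second, and relatedly, for $A$ of rank $n+1$ the family $\sF_{\leq n}(A)$ is \emph{not} the family of all proper subgroups unless $A$ is elementary abelian. The paper's general reduction step sidesteps both issues by directly reducing to the injectivity of restriction on $E_n$-cohomology for all $H \notin \sF$.
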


We will sketch the proofs of these two results below. Most of the results in
\cite{MNN15ii} proving $\sF$-nilpotence go through complex orientations, 
and the following is the basic tool.

\begin{proposition}[General reduction step] 
Let $R$ be a complex-oriented associative ring spectrum. Suppose 
$\sF$ is a family of subgroups of $G$. 
Suppose that for any $H \notin \sF$, the natural restriction map $R^*(BH) \to
\prod_{H' \subsetneq H} R^*(BH')$ has nilpotent kernel. Then $R \in \fun(BG,
\sp)$ is
$\sF$-nilpotent. 
\end{proposition}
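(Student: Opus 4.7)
The plan is to proceed by induction on $|G|$. If $G \in \sF$ the statement is trivial, so assume $G \notin \sF$ and the statement for all proper subgroups of $G$ equipped with the restricted families $\sF|_H = \sF \cap \{K \leq H\}$; the kernel hypothesis evidently descends, since for $K \leq H \leq G$ with $K \notin \sF|_H$ (equivalently $K \notin \sF$) the hypothesis applied to $K \leq G$ gives the desired statement for $K$. I will work in $\fun(BG, \md(R))$ throughout, since $\sF$-nilpotence there immediately implies $\sF$-nilpotence in $\fun(BG, \sp)$. The overall strategy is to reduce $\sF$-nilpotence of $R$ to $\mathcal{P}$-nilpotence, where $\mathcal{P}$ is the family of proper subgroups of $G$, and then verify the latter using the Euler class together with \Cref{Pnil}.

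For the reduction, by induction $R \in \fun(BH, \md(R))$ is $\sF|_H$-nilpotent for each $H \subsetneq G$. I claim the coinduced object $\coind_H^G R \simeq \mathbb{D}(G/H)_+ \wedge R$ is then $\sF$-nilpotent in $\fun(BG, \md(R))$. Indeed, the $\sF|_H$-nilpotent objects form the thick subcategory generated by objects of the form $\mathbb{D}(H/K)_+ \wedge Z$ with $K \in \sF|_H \subseteq \sF$; by the projection formula such an object is $\coind_K^H \res^H_K Z$, whose coinduction to $G$ is $\coind_K^G \res^H_K Z$, a $\mathbb{D}(G/K)_+$-module and hence a member of $\tht(\mathbb{D}(G/K)_+)$, which is $\sF$-nilpotent since $K \in \sF$. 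Exactness of $\coind_H^G$ transfers this to all $\sF|_H$-nilpotent objects, so $\coind_H^G R$ is $\sF$-nilpotent. Therefore the thick tensor ideal generated by $\{\coind_H^G R : H \subsetneq G\}$, which is precisely the class of $\mathcal{P}$-nilpotent objects in $\fun(BG, \md(R))$, is contained in the $\sF$-nilpotent objects. It thus suffices to prove $R$ itself is $\mathcal{P}$-nilpotent.

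By \Cref{Pnil}, $\mathcal{P}$-nilpotence of $R$ reduces to showing that $R \wedge e^n$ is null in $\fun(BG, \md(R))$ for some $n$, where $e \colon S^0 \to S^{\widetilde{\rho_G}}$ is the Euler class of the reduced regular representation. Complex orientation supplies an equivalence $R \wedge S^{\widetilde{\rho_G}} \simeq \Sigma^{2(|G|-1)} R$ in $\fun(BG, \md(R))$, so $R \wedge e$ is classified by the Euler class $e_{\widetilde{\rho_G}} \in R^{2(|G|-1)}(BG)$, and it suffices to show that this class is nilpotent in $R^*(BG)$. Now $\rho_G|_H \simeq \rho_H^{\oplus [G:H]}$ by the standard orbit decomposition, so $\widetilde{\rho_G}|_H \simeq \widetilde{\rho_H}^{\oplus [G:H]} \oplus \mathbf{1}^{\oplus ([G:H]-1)}$ as $H$-representations. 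For $H \subsetneq G$ the trivial summand is nonzero, and since the Euler class of a trivial line bundle vanishes, the multiplicativity of Euler classes forces $e_{\widetilde{\rho_G}}|_H = 0$. Thus $e_{\widetilde{\rho_G}}$ lies in the kernel of $R^*(BG) \to \prod_{H \subsetneq G} R^*(BH)$, which is nilpotent by the hypothesis applied to $G \notin \sF$; hence $e_{\widetilde{\rho_G}}^n = 0$ for some $n$. The main obstacle in writing this out carefully is the projection-formula and coinduction bookkeeping in the middle paragraph; once one verifies that coinduction sends the generators of $\sF|_H$-nilpotence to generators of $\sF$-nilpotence, the Euler class computation that finishes the proof is essentially formal.
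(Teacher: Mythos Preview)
Your proof is correct and follows the same approach as the paper's sketch: reduce to the family $\mathcal{P}$ of proper subgroups, then use \Cref{Pnil} together with complex orientation to reduce to nilpotence of the Euler class $e_{\widetilde{\rho_G}}$ in $R^*(BG)$, which follows from the hypothesis once one knows this class restricts to zero on every proper subgroup. The paper's proof simply asserts both the reduction to $\mathcal{P}$ and the vanishing of the restricted Euler class; you have supplied both details, via the induction/coinduction argument and the decomposition $\widetilde{\rho_G}|_H \cong \widetilde{\rho_H}^{\,[G:H]} \oplus \mathbf{1}^{\,[G:H]-1}$ respectively.
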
 

\begin{proof} 
We sketch the argument when $\sF$ is the family of \emph{proper} subgroups of
$G$ (to which the general case can be reduced). In this case, by \Cref{Pnil}, it
suffices to consider the Euler class 
$e: S^0 \to S^{\widetilde{\rho_G}}$ of the reduced regular representation; this
is a map in $\fun(BG, \sp)$. 

It suffices to show that 
the map $e^n \wedge R$ in $\fun(BG, \md(R))$ is nullhomotopic for $n \gg 0$. 
But using the complex orientation to identify 
$S^{\widetilde{\rho_G}}$ with $S^{2(|G|-1)}$, $e$
defines a class in $R^{2(|G|-1)}(BG)$ which restricts to zero on all proper
subgroups and which is therefore nilpotent, by assumption. 
\end{proof}

\begin{proof}[Proof of \Cref{nilpabelian} and \Cref{nilpln}] 

It suffices to show that if $G$ is a finite group, and if $x \in R^*(BG)$ is a
class which restricts to zero on all abelian subgroups $A \subset G$, then $x$
is nilpotent. 
To see this, one uses a faithful complex representation of $G$ and the
associated  flag bundle $F \to BG$. One argues that the class $x$ pulls back to
a 
nilpotent class on $F$ because $F$ can be built as a finite homotopy colimit of
spaces of the form $BA$, for $A \subset G$ abelian. However, the map $R^*(BG)
\to R^*(F)$ is injective by the projective bundle formula. 

To prove \Cref{nilpln}, it suffices to treat the case where $R = L_n S^0$.
Using the smash product theorem, one can  now reduce to the case where $R =
E_n$.
One now reduces to the case where $G$ is abelian. It suffices to show that if 
$G$ is of rank $\geq n+1$, then the map $R^*(BG) \to \prod_{G' \subsetneq G}
R^*(BG')$ has nilpotent kernel. In fact, a calculation shows that it is in fact
injectve. 
\end{proof}

Over a field, we saw in the previous subsection how the
descent-up-to-nilpotence picture was enough to imply $\mathscr{F}$-isomorphism
style results after Quillen. It turns out that this works over any ring
spectrum. 
We thus have the following result. 

\begin{theorem}[General $\mathscr{F}$-isomorphism] 
\label{generalfiso}
Suppose $R \in \fun(BG, \sp)$ with trivial action is $\sF$-nilpotent and $R$ is a homotopy commutative and
$\mathbb{E}_1$-ring
spectrum. Then the natural map 
$\phi: R^*(BG) \to \varprojlim_{G/H \in \sOGF^{op}} R^*(BH) $ has the following properties:
\begin{enumerate}
\item $\phi \otimes_{\mathbb{Z}} \mathbb{Z}[1/|G|]$ is an isomorphism.  
\item $\phi_{(p)}$ is a uniform $\mathscr{F}_p$-isomorphism. 
\end{enumerate}
\end{theorem}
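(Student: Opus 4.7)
The plan is to carry out, at the required level of generality, the argument sketched for Quillen's theorem \Cref{Quithm}. Set $A_\sF = \prod_{H \in \sF} \mathbb{D}(G/H_+) \in \clg(\fun(BG, \sp))$; by the $\sF$-nilpotence hypothesis $A_\sF$ is descendable. Tensor its cobar construction with $R$ (carrying trivial $G$-action) and apply $\pi_*((\cdot)^{hG})$; by \Cref{Adamstowerconv} the cosimplicial object $R \wedge \cb(A_\sF)$ is quickly converging with totalization $R$, so by \Cref{quickconvss} the resulting multiplicative BKSS converges to $R^*(BG)$ and degenerates at some finite page $E_N$ with a horizontal vanishing line $E_N^{s,t} = 0$ for $s > h$. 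A direct unpacking of the $E_1$-page, using that $(R \wedge \mathbb{D}((G/H_0 \times \cdots \times G/H_s)_+))^{hG}$ computes the Borel cohomology of the $G$-set $G/H_0 \times \cdots \times G/H_s$, identifies the zero-line $E_2^{0,*}$ with $\varprojlim_{G/H \in \sOGF^{op}} R^*(BH)$, so that the edge map of the spectral sequence is precisely $\phi$.

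For part (1), observe that for each subgroup $H \leq G$ the natural unit map $\mathbf{1} \to \mathbb{D}(G/H_+)$ admits a transfer retraction $\mathbb{D}(G/H_+) \to \mathbf{1}$ whose round-trip composite is multiplication by $[G:H]$. After inverting $|G|$, the unit $\mathbf{1}$ is therefore a retract of $\mathbb{D}(G/H_+)$ (for any chosen $H \in \sF$), hence of $A_\sF$ itself. This makes $\mathbf{1}$ (and thus every object) $A_\sF$-nilpotent of exponent one, so the augmented cosimplicial object $\mathbf{1} \to \cb(A_\sF)$ becomes split and the BKSS collapses at $E_2$ with only the zero-line surviving after base change to $\mathbb{Z}[1/|G|]$; hence $\phi[1/|G|]$ is an isomorphism.

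For part (2), work $p$-locally. Multiplicativity of the BKSS together with the horizontal vanishing line at $E_N$ immediately yields uniform nilpotence of the kernel: any $x \in \ker\phi_{(p)}$ is detected in filtration $\geq 1$, and then $x^{h+1}$ would be detected in filtration $\geq h + 1 > h$, forcing $x^{h+1} = 0$ at $E_\infty = E_N$. For the Frobenius surjectivity statement one follows Quillen's Leibniz-rule argument: for $y \in E_2^{0,*}$ and any differential $d_r$ with $2 \leq r \leq N$, one has $d_r(y^{p^k}) = p^k y^{p^k-1} d_r(y)$, and a bookkeeping argument using the finiteness of potentially nontrivial differentials (there are only $N-2$ of them) together with the nilpotence of $d_r(y)$ in positive filtration produces a uniform $M$ such that $y^{p^M}$ is a permanent cycle for every $y$. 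The principal obstacle is precisely making this last step rigorous: extending Quillen's Frobenius trick from the characteristic-$p$ setting to a general $p$-local homotopy commutative $\mathbb{E}_1$-ring $R$ requires carefully converting the nilpotence of $d_r(y)$ in $R^*(BG)_{(p)}$ into an actual uniform $p$-power of $y$ on which all differentials vanish, and this is where the bulk of the work must go.
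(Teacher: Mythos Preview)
Your overall architecture---the $A_\sF$-based Adams/BKSS, its quick convergence, the identification of the zero-line with the target limit, and the vanishing-line argument for nilpotence of $\ker\phi$---is exactly the strategy the paper has in mind, and your treatment of part~(1) via the transfer retraction is fine (the paper phrases this instead as ``the higher terms of the spectral sequence are $|G|$-power torsion,'' which after inverting $|G|$ gives the same collapse).

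The genuine gap is in your Frobenius surjectivity step. You write $d_r(y^{p^k}) = p^k\,y^{p^k-1}\,d_r(y)$ and then propose to kill this using the \emph{nilpotence} of $d_r(y)$ in positive filtration. But nilpotence of $d_r(y)$ (i.e.\ $d_r(y)^m = 0$) gives no control over the element $p^k\,y^{p^k-1}\,d_r(y)$: there is no reason the factor $y^{p^k-1}$ should interact with powers of $d_r(y)$. Over a field of characteristic $p$ this issue disappears because $p^k$ is already zero, which is why the argument for \Cref{Quithm} goes through cleanly; for a general $p$-local homotopy commutative ring spectrum it does not. The paper flags exactly this point: ``Unlike the proof given in the previous section of \Cref{Quithm}, the proof of \Cref{generalfiso} requires some additional work if $R$ is not necessarily torsion-free.''

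The missing ingredient, which the paper supplies by reference to \cite{MNN15ii}, is that the positive-filtration terms $E_r^{s,*}$ for $s \geq 1$ are \emph{uniformly $|G|$-power torsion}; this is an algebraic statement about Mackey functors (essentially that the higher cohomology of the relevant coefficient systems over $\sOGF$ is killed by a fixed power of $|G|$). After $p$-localization this makes them $p$-power torsion with a uniform exponent, and then the Leibniz formula does the work: once $p^k$ exceeds that exponent, $d_r(y^{p^k}) = p^k\,y^{p^k-1}\,d_r(y) = 0$ for every $r$, and iterating across the finitely many pages $2 \leq r \leq N$ yields the uniform $M$. So the mechanism you need is \emph{torsion}, not nilpotence, and establishing that torsion bound is where the real content lies.
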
 

Unlike the proof given in the previous section of \Cref{Quithm}, the proof of 
\Cref{generalfiso} requires some additional work if $R$ is not necessarily
torsion-free. The key piece of input is that the higher terms in the same
Adams-style spectral sequence are all $|G|$-power torsion. This uses some
algebraic facts about Mackey functors. We refer to \cite{MNN15ii} for details. 
The statement after inverting $|G|$ is nontrivial (in general,
calculating $E^*(BG)[1/|G|])$ for $E$ 
a spectrum is a difficult problem) 
and appears in \cite{HKR00} for $R = E_n$ and
related ring spectra, and follows from the spectral sequence as well.

We now note an example of a ring spectrum for which we do not have
$\sF$-nilpotence for any family smaller than the $p$-groups.
\begin{theorem} 
If $R = S^0$, then $\md(R)$ is not $\sF$-nilpotent 
for any family smaller than the family of $p$-subgroups of $G$.
\end{theorem}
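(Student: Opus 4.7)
The plan is to reduce to the case of a nontrivial $p$-group $P$ missing from $\sF$ and derive a contradiction via the Borel Tate construction applied to $MU$-modules. First I would reduce: given a nontrivial $p$-subgroup $P \leq G$ with $P \notin \sF$, the symmetric monoidal restriction $\fun(BG, \sp) \to \fun(BP, \sp)$ sends each $\mathbb{D}(G/H)_+$ to a product of $\mathbb{D}(P/K)_+$'s with $K$ subconjugate to $H$ in $G$, so any $\sF$-nilpotent object of $\fun(BG, \sp)$ restricts to a $\sF|_P$-nilpotent object in $\fun(BP, \sp)$. Since $P \notin \sF$ and $\sF$ is closed under subconjugacy, $\sF|_P$ contains only proper subgroups of $P$. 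It therefore suffices to show that $S^0 \in \fun(BP, \sp)$ is not $\mathcal{P}(P)$-nilpotent for any nontrivial $p$-group $P$, where $\mathcal{P}(P)$ is the family of proper subgroups of $P$.

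By \Cref{Pnil}, this is equivalent to showing that the Euler class $e \colon S^0 \to S^{\widetilde{\rho_P}}$ of the reduced complex regular representation is not $\otimes$-nilpotent. Suppose for contradiction that $e^{\otimes n} \simeq 0$ in $\fun(BP, \sp)$. Smashing with $MU$ and invoking complex orientability, $e \wedge MU$ corresponds to the top Chern class $c = c_{|P|-1}(\widetilde{\rho_P}) \in MU^{2(|P|-1)}(BP)$, and the hypothesis gives $c^n = 0$. I now apply the exact Borel Tate functor $(-)^{tP} \colon \fun(BP, \md(MU)) \to \md(MU)$: the image $\bar c$ of $c$ in $\pi_{\ast}(MU^{tP})$ satisfies $\bar c^n = 0$. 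However, since $\widetilde{\rho_P}^P = 0$, the Atiyah--Bott localization principle for complex-oriented Tate cohomology asserts that $\bar c$ is a unit in the graded ring $\pi_\ast(MU^{tP})$, so $\bar c^n = 0$ would force $MU^{tP} \simeq 0$. This contradicts the non-triviality of $MU^{tP}$ for nontrivial $p$-groups $P$, which follows from the Tate spectral sequence $\hat H^s(P; \pi_t MU) \Rightarrow \pi_{t-s}(MU^{tP})$ whose $E_2$-page is nonzero, or more concretely by restricting to a cyclic subgroup $C_p \leq P$ and computing directly.

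The main obstacle is verifying that $\bar c$ is a unit in $\pi_\ast(MU^{tP})$. For cyclic $P = C_{p^k}$ this follows from a direct formal group computation: in $MU^{tP}$ the class $u = c_1(L)$ of the tautological character becomes invertible (inverting Euler classes is the defining feature of the Tate construction), each Chern root $[j](u)$ of a nontrivial character for $j = 1, \ldots, p^k - 1$ factors as a unit multiple of $u$, and the relation $[p^k](u) = 0$ forces $p$ itself to become a unit in $MU^{tP}$; so the full Euler class (a product of such factors) is a unit. For non-cyclic $p$-groups one can either reduce to cyclic subgroups via Hopkins--Kuhn--Ravenel character theory, or argue abstractly using the identification of the Borel Tate with geometric $P$-fixed points of the cofree genuine spectrum together with the equivalence $S^{\widetilde{\rho_P}} \wedge \widetilde E \mathcal{P} \simeq \widetilde E \mathcal{P}$ in genuine $P$-spectra (valid because both sides have matching geometric $H$-fixed points for every $H \leq P$, using that $\Phi^P S^{\widetilde{\rho_P}} = S^{(\widetilde{\rho_P})^P} = S^0$ and $\Phi^H \widetilde E \mathcal{P} = 0$ for $H \neq P$).
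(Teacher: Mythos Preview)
Your reduction to a $p$-group $P\notin\sF$ and the use of \Cref{Pnil} are fine, but the heart of the argument---that the Euler class $\bar c$ of $\widetilde{\rho_P}$ is a unit in $MU^{tP}$---is false for non-abelian $P$, and you cannot avoid that case. Take $G=Q_8$ and $\sF$ the family of abelian subgroups: the only subgroup missing from $\sF$ is $Q_8$ itself, so you must handle $P=Q_8$. But by \Cref{nilpabelian} the object $MU\in\fun(BQ_8,\sp)$ is $\mathcal{P}(Q_8)$-nilpotent, hence by \Cref{Pnil} the Euler class $c$ is already nilpotent in $MU^*(BQ_8)$, and therefore its image $\bar c$ is nilpotent in $\pi_*(MU^{tQ_8})$. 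Since $MU^{tQ_8}\neq 0$ (by \Cref{tatevanishnilp} $MU^{tQ_8}=0$ would force $MU$ to be $\{1\}$-nilpotent, contradicting that its minimal family is the abelian subgroups), $\bar c$ cannot be a unit. In short, smashing with $MU$ throws away exactly the information you need once $P$ is non-abelian.

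Your proposed justification via ``Borel Tate $=$ geometric $P$-fixed points of the cofree spectrum'' is also incorrect beyond $P=C_p$: the Borel Tate is $(F(EP_+,\underline{MU})\wedge\widetilde{E\{1\}})^P$, while $\Phi^P$ of the cofree spectrum is $(F(EP_+,\underline{MU})\wedge\widetilde{E\mathcal{P}})^P$; these involve different universal spaces and agree only when $\mathcal{P}(P)=\{1\}$. In $\Phi^P$ of the cofree spectrum the Euler class of $\widetilde{\rho_P}$ \emph{is} inverted, but that functor vanishes on $MU$ for non-abelian $P$ (again by $\mathcal{P}$-nilpotence), so no contradiction arises there either.

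The paper's route is genuinely different: rather than passing to $MU$, it invokes the Segal conjecture to identify $\pi_S^0(BP)$ with the $p$-completed Burnside ring $A(P)^\wedge_p$, and then shows the rational conclusion of \Cref{generalfiso} fails. After inverting $p$ one has $A(P)\otimes\mathbb{Q}_p$, and the idempotent $e_P\in A(P)\otimes\mathbb{Q}$ (supported on the conjugacy class of $P$ under the marks isomorphism) restricts to zero in every $A(H)$ with $H\subsetneq P$, so the restriction map to the limit over $\sOGF$ has nontrivial kernel.
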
 
We do not know an ``elementary'' proof of the above result.
For example, for $p$-groups, then we know by the Segal conjecture that the stable cohomotopy of $BG$ is
very close to the $p$-adic completion of the Burnside ring of $G$. As a result,
it is possible to show that the rational conclusion 
of \Cref{generalfiso} fails. 

This raises the following general question. 

\begin{question} 
For which families $\sF$ does there exist an associative ring spectrum $R$ such that
$\sF$
is the minimal family for which  $R \in \fun(BG, \sp)$ (with trivial action) is $\sF$-nilpotent?
\end{question}

\begin{remark}

In all the examples where one has a family smaller than the family of
$p$-groups, the proof relies essentially on complex orientations. In
particular, in all known examples the minimal family (if it is not all
subgroups)  
is contained in the family of \emph{abelian} subgroups. We do not know if there
exists $R$ such that the minimal family is larger than the abelian subgroups
(but not all subgroups). 

\end{remark} 

\bibliographystyle{amsalpha}
\bibliography{Vancouver}

\end{document}